\let\oldmarginpar\marginpar
\renewcommand\marginpar[1]{\-\oldmarginpar[\raggedleft\footnotesize #1]%
{\raggedright\footnotesize #1}}
\theoremstyle{plain}
\newtheorem{thm}{Theorem}[section]
\newtheorem{lemma}[thm]{Lemma}
\newtheorem{prop}[thm]{Proposition}
\newtheorem{coro}[thm]{Corollary}
\theoremstyle{definition}
\newtheorem{definition}[thm]{Definition}
\newtheorem{remark}[thm]{Remark}
\theoremstyle{remark}
\numberwithin{equation}{section}
\renewcommand{\a}{\alpha}
\renewcommand{\S}{\mathbb{S}}
\newcommand{\D}{\mathbb{D}}
\newcommand{\Z}{\mathbb{Z}}
\newcommand{\R}{\mathbb{R}}
\newcommand{\C}{\mathbb{C}}
\newcommand{\J}{\mathcal{J}}
\newcommand{\La}{\Lambda}
\newcommand{\la}{\lambda}
\newcommand{\p}{\varphi}
\newcommand{\e}{\varepsilon}
\newcommand{\dd}{\partial}
\newcommand{\op}{\operatorname}
\newcommand{\sse}{\subseteq}
\newcommand{\les}{\leqslant}
\newcommand{\x}{\times}
\newcommand{\sm}{\setminus}
\newcommand{\pt}{\text{point}}
\newcommand{\const}{\text{constant}}
\newcommand{\Symp}{\operatorname{Symp}}
\newcommand{\wt}{\widetilde}
\newcommand{\ol}{\overline}
\newcommand{\std}{\text{st}}
\newcommand{\st}{\text{st}}
\newcommand{\ot}{\text{ot}}
\newcommand{\cyl}{\text{cyl}}
\newcommand{\univ}{\text{univ}}
\newcommand{\Int}{\operatorname{Int}}
\def\Op{{\mathcal O}{\it p}\,}
\begin{document}
\begin{abstract}
In this article we establish efficient geometric criteria to decide whether a contact manifold is overtwisted. Starting with the original definition, we first relate overtwisted disks in different dimensions and show that a manifold is overtwisted if and only if the Legendrian unknot admits a loose chart. Then we characterize overtwistedness in terms of the monodromy of open book decompositions and contact surgeries. Finally, we provide several applications of these geometric criteria.
\end{abstract}

\title{Geometric criteria for overtwistedness}
\subjclass[2010]{Primary: 57R17. Secondary: 53D10, 53D15.}

\author{Roger Casals}
\address{University of California Davis, Department of Mathematics, Shields Avenue, Davis, CA 95616, United States of America}
\email{casals@math.ucdavis.edu}

\author{Emmy Murphy}
\address{Northwestern University, Department of Mathematics, 2033 Sheridan Road Evanston, IL 60208, United States of America}
\email{e\_murphy@math.northwestern.edu}

\author{Francisco Presas}
\address{Instituto de Ciencias Matem\'aticas CSIC, C. Nicol\'as Cabrera 13 28049 Madrid, Spain}
\email{fpresas@icmat.es}

\maketitle
\vspace{-1cm}
\section{Introduction} \label{sec: intro}

Symplectic and contact topology intertwine the global behaviour from differential topology with subtle rigid geometric structures \cite{ArGi,EGH,ElFR,Gr}. Both sides of the flexible--rigid dichotomy \cite{ElFR} feature prominently in the field; the present work belongs to the flexible side of contact topology. The main result in this article is Theorem \ref{thm:main}, which characterizes the contact structures satisfying the parametric $h$--principle \cite{BEM,PDR} in terms of the different geometric notions existing in the literature in contact topology, including adapted open book decompositions \cite{Co2,Gi2}, contact surgeries {\cite{ElStein,We91}, loose Legendrian submanifolds \cite{loose} and obstructions to fillability \cite{PS}. This work proves that the existence of these objects with additional properties, which conjecturally led to an $h$--principle, does indeed imply that the $h$--principle is satisfied.

The geometric criteria stated in Theorem \ref{thm:main} can be verified in several interesting cases, and in particular we provide the first explicit examples of overtwisted contact manifolds in higher dimensions. Theorem \ref{thm:main} has been used in a variety of contexts, such as \cite{CM,ElMu,Huang}, and we strongly believe it captures the most efficient ways to detect overtwistedness. The central goal of this article is to prove Theorem \ref{thm:main}.

% --------------------
\subsection{The main theorem}\label{ssec:main}
A contact structure on a ($2n-1$)--dimensional smooth manifold $Y$ is a maximally non--integrable hyperplane distribution $\xi\sse TY$, a succinct introduction to the properties of such hyperplane distributions can be found in \cite[Chapter 4]{ArGi}. There exists a remarkable class of contact structures, which has been introduced in \cite[Definition 3.6]{BEM} in any dimension, called the {\it overtwisted} contact structures. Generalizing the original definition and results in the $3$-dimensional case \cite{El}, it is shown in \cite[Theorem 1.2]{BEM} that overtwisted contact structures satisfy a parametric $h$-principle \cite{ElMi}, i.e.~their classification up to contact isotopy coincides with the classification of homotopy classes of almost contact structures. This classification then becomes a strictly algebraic topological problem which can be solved via obstruction theory \cite{Ha}. The definition of the class of overtwisted contact structures provided in \cite[Definition 3.6]{BEM} will be reviewed in Section \ref{sec:OTxD2}, but for the time being the reader can think of them as contact structures containing a certain contact hypersurface germ in the same vein that the 3--dimensional case \cite{El}.

The result in the article \cite[Theorem 1.1]{BEM} does demonstrate the existence of overtwisted contact structures homotopic to any almost contact structure, but a crucial drawback to the existence proof is that the construction is not explicit. In consequence, there were no explicit examples of closed overtwisted contact manifolds of dimension $2n-1\geq5$, and the techniques used in \cite{BEM} give no criterion in order to show that a given manifold is overtwisted, other than a direct application of the definition, which to the knowledge of the authors has never been done. The geometric criteria Theorem \ref{thm:main} entirely solves these problems by providing a number of equivalent conditions which characterize overtwistedness. In addition, it brings together different geometric objects used to study contact structures, thus establishing a unifying ground for flexibility across the different facets of contact topology.

Let us now state our main result. The notions and notations used in the following statement will be explained in the rest of this Subsection \ref{ssec:main}.

\begin{thm}\label{thm:main}
Let $(Y, \xi)$ be a contact manifold of dimension $2n-1\geq5$ and $\alpha_\ot\in\Omega^1(\R^3)$ a $1$--form such that $(\R^3,\ker\a_\ot)$ is an overtwisted contact structure.

Then the following conditions are equivalent:
\begin{enumerate}[(i)]
\item[1.] The contact structure $(Y, \xi)$ is overtwisted.
\vspace{0.1cm}
\item[2.] There is a contact embedding of $(\R^3 \x \C^{n-2}, \ker\{\alpha_\ot + \lambda_\std\})$ into $(Y, \xi)$.
\vspace{0.1cm}
\item[3.] The standard Legendrian unknot $\Lambda_0 \sse (Y,\xi)$ is a loose Legendrian submanifold.
\vspace{0.1cm}
\item[4.] $(Y, \xi)$ contains a small plastikstufe with spherical core and trivial rotation.
\vspace{0.1cm}
\item[5.] There exists a contact manifold $(Y', \xi')$ and a loose Legendrian submanifold $\Lambda \sse (Y',\xi')$ such that $(Y,\xi)$ is contactomorphic to the contact $(+1)$--surgery of $(Y',\xi')$ along $\Lambda$.
\vspace{0.1cm}
\item[6.] There exists a negatively stabilized contact open book compatible with $(Y, \xi)$.\hfill$\Box$
\end{enumerate}
\end{thm}
We will momentarily discuss the different items in the statement of Theorem \ref{thm:main}. However, we first note that the second condition in the geometric criteria is actually verifiable.
\begin{remark}\label{rmk:2a=2b}
There exists a positive constant $R=R(n,\a_\ot)\in\R^+$, which only depends on the choice of overtwisted contact form $\alpha_\ot$ and the dimension of the contact manifold $(Y,\xi)$ such that the second condition in Theorem \ref{thm:main} on the existence of the contact embedding of
$$(N_\infty,\xi_\infty)=(\R^3 \x \C^{n-2}, \ker\{\alpha_\ot + \lambda_\std\})$$
into the contact manifold $(Y,\xi)$ is equivalent to the existence of a contact embedding of
$$(N_R,\xi_R)=(\R^3 \x D^{2n-4}(R), \ker\{\alpha_\ot + \lambda_\std\})$$
into the contact manifold $(Y, \xi)$, as it follows from the h--principle \cite[Corollary 1.4]{BEM}. This critical radius $R(n,\a_\ot)$ is a finite number, and thus in order to verify overtwistedness of the contact manifold $(Y,\xi)$ using the second characterization in Theorem \ref{thm:main} it suffices to find contact embeddings of $(N_\rho,\xi_\rho)$ into $(Y,\xi)$ for any finite radius $\rho\in\R^+$. Then choosing $\rho\in\R^+$ such that $R(n,\a_\ot)\leq\rho$ yields the existence of a contact embedding of the infinite radius contact domain $(N_\infty,\xi_\infty)$ into $(Y,\xi)$.\hfill$\Box$
\end{remark}

Let us now describe the elements entering in the statement of Theorem \ref{thm:main} above in more detail. The 1--form $\lambda_\std\in\Omega^1(\C^{n-2})$ denotes the standard Liouville form on both the open disk $D^{2n-4}(R)$ and the complex Euclidean space $\C^{n-2}$, which in the standard coordinates $(x_1,y_1,\ldots,x_{n-2},y_{n-2})=(r_1,\theta_1,\ldots,r_{n-2},\theta_{n-2})$ is expressed as
$$\lambda_\std = \frac12\sum_{i=1}^{n-2}(x_idy_i - y_idx_i)=\frac12\sum_{i=1}^{n-2}r_i^2d\theta_i.$$
There are infinitely many choices for an overtwisted contact form $\a_\ot$ for the real Euclidean space $\R^3(z,r,\theta)$, the most used in the literature is the rotationally symmetric $z$--invariant form
$$\a_\ot=\cos(r)dz+r\sin(r)d\theta,$$
but note that the statement of Theorem \ref{thm:main} only requires a contact form $\a_\ot$ defining an overtwisted contact structure on $\R^3$, not necessarily contactomorphic to $\ker(\cos(r)dz+r\sin(r)d\theta)$. It is interesting to observe that the classification of overtwisted contact structures in $\R^3$ is understood \cite{El93}, and thus such choices can be readily classified as well.

\subsection{The Statement of Theorem \ref{thm:main}}

Theorem \ref{thm:main} proves the equivalence between different notions of flexibility in contact topology, and the several geometric objects that appear in the statement of Theorem \ref{thm:main} have been introduced by many authors in different works, who we now credit.

The definition of a higher--dimensional overtwisted disk featuring in the first item $(1)$ initially appears in the article \cite[Definition 3.6]{BEM}. The 3-dimensional overtwisted disk was initially introduced by Y.~Eliashberg in \cite[Section 1.4]{El}.

The second item $(2)$ in Theorem \ref{thm:main} features the 3-dimensional contact structure $(\R^3,\ker{\alpha_\ot})$, as discussed above, and the contact product $(\R^3 \x \C^{n-2}, \ker\{\alpha_\ot + \lambda_\std\})$. This latter higher-dimensional contact manifold should be seen as an infinitely large neighborhood of the contact submanifold $\R^3\times\{0\}$. The first breakthrough in the study of contact neighborhoods is contained in the article \cite{NP}, particularly \cite[Corollary 13]{NP}, where it is proven that a large enough neighborhood of a 3-dimensional overtwisted contact manifold contains a generalized plastikstufe.

The article \cite{NP} has been of central importance for higher-dimensional contact geometry, and the statement of the equivalence $(1)\Longleftrightarrow(2)$ has its roots in \cite{NP}. In particular, the first-named author is grateful to K.~Niederkr\"uger for several discussions on \cite{NP} and related topics. The third-named author is also thankful to him for many useful conversations.

\begin{remark}
The equivalence $(1)\Longleftrightarrow(2)$ in Theorem \ref{thm:main} is crucial in the present proof of Theorem \ref{thm:main}. Indeed, all other equivalences with the first item $(1)$ use the equivalence $(1)\Longleftrightarrow(2)$, which we shall prove first in Section \ref{sec:OTxD2}. In this sense, the equivalence $(1)\Longleftrightarrow(2)$ is the core of Theorem \ref{thm:main}. \hfill$\Box$
\end{remark}

Regarding the third item $(3)$, the notion of a loose Legendrian submanifold was introduced in the article \cite[Definition 4.3]{loose}, and the definition of a plastikstufe, appearing in the fourth item $(4)$, was given by K.~Niederkr\"uger in the article \cite[Section 1]{PS}. The concept of $(+1)$--surgery in the fifth item (5) is first detailed in the articles \cite{ElStein,We91} and the notions of a compatible open book and a negative stabilization appearing in the sixth item (6) were introduced by E.~Giroux \cite{Co2,Gi2}.

In short, we now describe and contextualize these geometric concepts before delving into their more technical nature in Section \ref{background}.

The \emph{standard Legendrian unknot} $\Lambda_0 \sse (\R^{2n-1},\xi_\std)$ is defined to be the Legendrian sphere
$$\Lambda_0 = \{y_i = 0: i=1,\ldots, n\} \cap S^{2n-1} \sse (S^{2n-1},\xi_\std) \sm \{\pt\} \sse \C^n[x_1,y_1,\ldots,x_n,y_n],$$
where the standard contact structure $(\S^{2n-1},\xi_\std)$ is defined by restriction of the Liouville form $\lambda_\st$ to the unit sphere $\S^{2n-1}=\{(x_1,y_1,\ldots,x_n,y_n)\in\C^n:\|x\|^2+\|y\|^2=1\}\sse\C^n$ and we have used the natural contactomorphism $(\R^{2n-1},\xi_\std) \cong (S^{2n-1},\xi_\std) \sm \{\pt\}$, detailed for instance in \cite[Prop.~2.1.8]{Ge}. Then the standard Legendrian unknot $\Lambda_0 \sse (Y, \xi)$ is defined by the inclusion of a Darboux chart $\Lambda_0 \sse (\R^{2n-1},\xi_\std) \sse (Y, \xi)$, all of which are isotopic.

The concept of a \emph{loose} Legendrian submanifold, which appears in the third item (3) is first studied in the article \cite{loose}, and the reader might also be interested in \cite{CM,CM2,CE,MNPS}. The fact that a Legendrian submanifold is loose is characterized by the existence of a certain piece that the Legendrian might or might not have: Theorem \ref{thm:main} states that if the most basic Legendrian, the Legendrian unknot, already contains such piece, then the ambient manifold $(Y,\xi)$ is overtwisted. Thus, we are relating the flexibility $h$--principle exhibited by loose Legendrian submanifolds \cite{loose} with the $h$--principle satisfied by overtwisted contact structures \cite[Theorem 1.2]{BEM}. Note that with the techniques developed in \cite{CM,CM2} it is much simpler to verify that the Legendrian unknot is loose than proving overtwistedness by using the definition.

The \emph{plastikstufe}, appearing in the fourth item (4), is an $n$-dimensional smooth submanifold $\mathcal{P} \sse (Y, \xi)$ such that the germ of the ambient contact structure $\xi$ in an open neighborhood of $\mathcal{P}$ is given by an explicit local model, inspired by the definition of the overtwisted disk \cite{El,Gr} in the three--dimensional case. The plastikstufe was first defined in the article \cite{PS} and shown to be an obstruction to symplectic fillability in higher--dimensions, in the same manner that the overtwisted $2$--disk in a contact 3--fold obstructs the existence of a 4--dimensional symplectic filling. The technical definitions of {\it small} and {\it trivial rotation} were first introduced in \cite{MNPS}, where the existence of a plastikstufe is studied in relation to loose charts for Legendrians submanifolds. Both hypotheses are technical, and according to the recent work \cite{Huang} they can actually be removed. The statement of Theorem \ref{thm:main} concerning the plastikstufe thus relates the existence of an explicit $n$-dimensional contact germ $\mathcal{P}$, built as a family of overtwisted $2$--disks, with higher--dimensional overtwistedness \cite{BEM}. This has meaningful advantages, such as the fact that there are simple geometric constructions of plastikstufes \cite{Ca,CPS,NP} and it is often simpler to find the contact germ $\mathcal{P}$ than the higher--dimensional overtwisted contact germ \cite[Definition 3.6]{BEM}.

In the fifth item of Theorem \ref{thm:main}, overtwisted contact manifolds are characterized as the contact manifolds which admit a contact surgery presentation in which one of the Legendrian $(+1)$--components of the surgery link admits a loose chart in the complement of the other components. The definition of a {\it contact $(+1)$--surgery} along a Legendrian sphere is implicit in the theory of Weinstein handle attachments \cite{CE,ElStein,We91}; it is the surgery induced by attaching a \emph{concave} handle to a compact piece of the symplectization. In the higher dimensional case it was studied in more depth in \cite{Avdek}, where the implication (5)$\Rightarrow$(1) proven in Theorem \ref{thm:main} is stated as Conjecture 9.16. The essential fact that the reader should remember regarding this fifth item is that we prove that contact $(+1)$--surgery along a loose Legendrian sphere always yields an overtwisted contact manifold. The study of contact structures from the surgery viewpoint is well--understood in the three--dimensional case \cite{OS} and it is currently a developing field of interest in higher--dimensions \cite{CM,CM2}.

Finally, {\it compatible open books} appear as the sixth geometric criteria to detect overtwistedness. In order for this characterization in Theorem \ref{thm:main} to apply we also suppose that $(Y,\xi)$ is a closed contact manifold. The notion of an open book compatible with a contact structure $(Y, \xi)$ was first introduced by E.~Giroux in his study of the correspondance between open books and contact structures \cite{Co2,Gi2}. In brief, it states that an appropriate open book decomposition of the smooth manifold $Y$ determines a contact structure $\xi$, and conversely every contact manifold $(Y,\xi)$ admits such an adapted open book decomposition.

The open books compatible with a contact structure $(Y,\xi)$ admit a contact operation: they can be positively stabilized or negatively stabilized. The resulting open books induce two contact structures $\xi_+$ and $\xi_-$ on the smooth manifold $Y$. The positive stabilization $(Y,\xi_+)$ is an operation which yields a contact structure contactomorphic to $(Y,\xi)$, but the contact structure $(Y,\xi_-)$ resulting from a negative stabilization is oftentimes not contactomorphic to $(Y,\xi)$. In particular, the negative stabilization of a contact structure is known to have vanishing symplectic field theory \cite{BN,BvK}. In particular, using \cite[Theorem 1.3]{BvK}, Theorem \ref{thm:main} implies that the contact homology of an overtwisted contact manifold vanishes.

These geometric objects will be discussed in more technical detail in the subsequent Section \ref{background}, but we hope that the above description provides some context for Theorem \ref{thm:main} and it helps the reader to navigate between the diverse range of objects in its statement.
% --------------------
\subsection{The argument for Theorem \ref{thm:main}}\label{ssec:thmmain} First, there are six equivalences stated in Theorem \ref{thm:main} and there is by no means a canonical approach nor a natural order to prove them. However, we have chosen a route that in our perspective most enlightens the connection between the different geometric objects and also minimizes the need for a thorough understanding of the article \cite{BEM}.

To begin with, the argument we use to prove Theorem \ref{thm:main} is in its entirety an induction in the dimension $(2n-1)$. That is, we shall first prove Theorem \ref{thm:main} for contact 5--folds $(Y,\xi)$, which constitutes the base case, and we will then show that if the statement is true for any smooth manifold $Y$ with $\dim(Y)=2n-3$ then it is also true for $(2n-1)$--dimensional manifolds. With this in mind, the equivalences will be proven according to the following program:

\begin{itemize}
\item[-] The equivalence (1)$\Longleftrightarrow$(2) is the content of Theorem \ref{thm:OTxD2}, proved in Section \ref{sec:OTxD2}.\\

\item[-] The equivalences (1)$\Longleftrightarrow$(3)$\Longleftrightarrow$(4): the implication (3)$\Rightarrow$(1) is proven in Section \ref{sec: cobord} as a consequence of Theorem \ref{thm: loose to OT}. The main ingredient is Lemma \ref{lem:cobord}, which is where the inductive hypothesis is used. Note that (4)$\Rightarrow$(3) follows from \cite[Theorem 1.1]{MNPS}.\\

\item[-] The equivalence (1)$\Longleftrightarrow$(5) is proven in Section \ref{sec: surgery} where we show the implication (5)$\Rightarrow$(4).\\

\item[-] The equivalence (1)$\Longleftrightarrow$(6) is shown in Section \ref{sec:NegStab}, with an argument proving (6)$\Rightarrow$(3).
\end{itemize}

The implications we have emphasized above are the ones that require new ideas and techniques. The remaining implications needed in order to obtain the equivalences follow from the $h$--principle: the relative parametric $h$--principle, \cite[Theorem 1.1]{BEM} and \cite[Theorem 1.2]{BEM}, does imply (1)$\Rightarrow$(2), (1)$\Rightarrow$(3) and (1)$\Rightarrow$(4), and the implications (1)$\Rightarrow$(5) and (1)$\Rightarrow$(6) are not hard. The real effort, as in any result characterizing an $h$--principle, is to prove the converse implications by constructing an overtwisted disk from a priori weaker geometric object. Section \ref{sec:mainproof} contains the proof of Theorem \ref{thm:main} gathering the equivalences above.

\begin{remark}
Here is an alternative route that two of the authors have also used in talks since it minimizes the use of the $h$--principle \cite[Theorem 1.1]{BEM}. First, one proves the equivalence (1)$\Longleftrightarrow$(2) with the argument in this article, and then proceeds with the following sequence:

\begin{itemize}
\item[-] The equivalence (3)$\Longleftrightarrow$(6) can be proven directly with the techniques we develop in Section \ref{sec: cobord}. This is a self--contained relation.
\item[-] The equivalence (2)$\Longleftrightarrow$(3) then can be established by proving (3)$\Rightarrow$(2) from our cobordism argument in Section \ref{sec: cobord}, and deducing the implication (2)$\Rightarrow$(3) by adapting the classical 3--dimensional destabilizing argument in the presence of an overtwisted disk \cite{MNPS}.
\item[-] The implications (5)$\Rightarrow$(2) and (5)$\Rightarrow$(3) can be proven directly by studying Weinstein handle attachment in detail \cite{CM,CM2,ElMu}, and finally the implication (6)$\Rightarrow$(5) follows from the fact that the zero section in $\op{OB}(T^*S^n,\op{id})$ is a loose Legendrian submanifold \cite{CM2}.
\end{itemize}
Hence the equivalences (2)$\Longleftrightarrow$(3)$\Longleftrightarrow$(4)$\Longleftrightarrow$(5)$\Longleftrightarrow$(6) do not require the overtwisted $h$--principle \cite[Theorem 1.2]{BEM}. Nevertheless, they require the loose Legendrian $h$--principle \cite{loose} and the main arguments in this article. Thus, from a flexible perspective it is neater to directly use the $h$--principle \cite[Theorem 1.2]{BEM} to immediately conclude the converses, which also explains our choice of strategy.\hfill$\Box$
\end{remark}
% --------------------
\subsection{Organization} The article contains eight sections, which we have distributed as follows. First, Section \ref{background} provides the required background in contact topology in order to follow the article. Then, Sections \ref{sec:OTxD2}, \ref{sec: cobord}, \ref{sec: surgery} and \ref{sec:NegStab} contain the main results for the proof of Theorem \ref{thm:main}, these results are divided in terms of the equivalences they are used to prove in Theorem \ref{thm:main}. Section \ref{sec:mainproof} contains the proof of Theorem \ref{thm:main}. Section \ref{ssec:cons} details two applications of Theorem \ref{thm:main}.

Section \ref{sec:OTxD2} proves the first equivalence (1)$\Longleftrightarrow$(2), Section \ref{sec: cobord} establishes the two equivalences (1)$\Longleftrightarrow$(3)$\Longleftrightarrow$(4), Section \ref{sec: surgery} then proves the equivalence (1)$\Longleftrightarrow$(5) and Section \ref{sec:NegStab} concludes with the proof of the equivalence (1)$\Longleftrightarrow$(6). Each of these sections also contains results that can be of interest on their own. In particular, we believe that the connection developed in Section \ref{sec:NegStab} is relevant for high--dimensional contact topology, as the subsequent work \cite{CM2} hopefully illustrates. Finally, Section \ref{ssec:cons} gives some applications of Theorem \ref{thm:main} to contact squeezing and constructions of Weinstein cobordisms.\hfill$\Box$
% --------------------
\subsection{Acknowledgements} We are grateful to M.S.~Borman and Y.~Eliashberg for many useful discussions. We thank O.~van Koert, O.~Plamenevskaya and K.~Siegel for valuable conversations and U.~Varolgunes and C.~Wendl for comments on the article.

F.~Presas is indebted to R.~Casals for suggesting to study the first equivalence in Theorem \ref{thm:main}. He would also like to acknowledge him for pushing this project with so much insight and determination.

R.~Casals is supported by the NSF grant DMS-1841913 and a BBVA Research Fellowship and E.~Murphy is supported by the NSF grant DMS-1510305 and a Sloan Research Fellowship. F.~Presas is supported by the Spanish Research Projects SEV--2015--0554, MTM2016--79400--P and MTM2015--72876--EXP.\hfill$\Box$
% --------------------
\section{Preliminaries}\label{background}

In this section we detail a number of relevant definitions and results in high--dimensional contact topology which are used along the article. In particular, we have included the definitions of the geometric objects in the statement of Theorem \ref{thm:main}. These preliminaries are necessary both for the understanding of its statement as well as its proof.

\subsection{Loose Legendrians} The notion of a loose Legendrian submanifold appears in the equivalence (1)$\Longleftrightarrow$(3), where overtwistedness is characterized in terms of the Legendrian unknot being a loose Legendrian; let us now define this class of Legendrian submanifolds. First, let $B^3\sse(\R^3,\xi_\std)$ be the round 3--dimensional ball in a contact Darboux chart and let $\Lambda_S \sse (\R^3,\xi_\std)$ be the 1--dimensional stabilized Legendrian arc depicted in Figure \ref{fig: stab}.

\begin{center}
\begin{figure}[h!]
\includegraphics[scale=0.45]{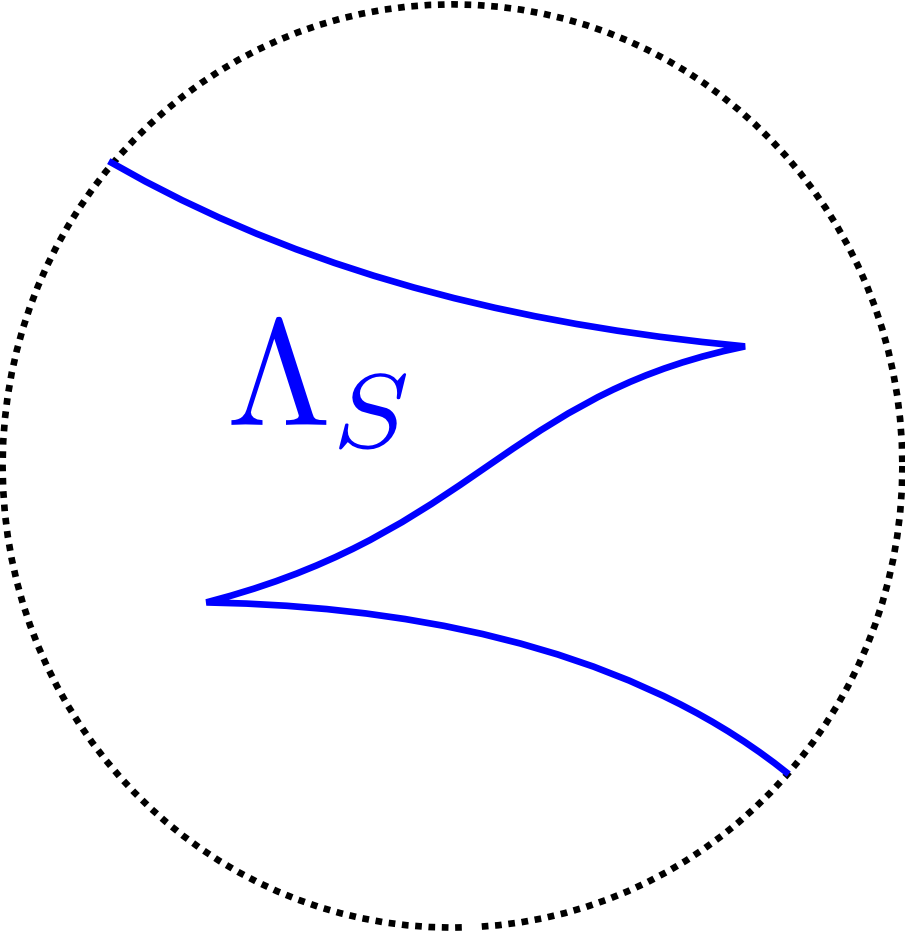}
\caption{The front projection of a stabilized Legendrian arc.}
\label{fig: stab}
\end{figure}
\end{center}

Then consider a closed manifold $Q$ and an open neighborhood $\Op(Z) \sse T^*Q$ of the zero section $Z \sse T^*Q$, and note that the product smooth submanifold
$$\Lambda_S \x Z\sse (B^3 \x \Op(Z),\ker(\alpha_\std + \lambda_\std))$$
is a Legendrian submanifold of the contact structure $\ker(\alpha_\std + \lambda_\std)$. This is the crucial local model that defines looseness, as precised in the following definition.

\begin{definition} The contact pair $(B^3 \x \Op(Z), \Lambda_S \x Z)$ endowed with the contact structure $\ker(\alpha_\std + \lambda_\std)$ is said to be a \emph{loose chart}, where $Z\sse T^*Q$ is the zero section of the cotangent bundle $(T^*Q,\la_\std)$ and $Q$ an arbitrary closed manifold.

Let $\Lambda \sse (Y, \xi)$ be a Legendrian submanifold in a contact manifold with $\dim(Y)\geq5$. The Legendrian $\Lambda$ is \emph{loose} in the contact manifold $(Y,\xi)$ if there is an open set $V \sse Y$ such that the contact pair $(V, V \cap \Lambda)$ is contactomorphic to a loose chart.\hfill$\Box$
\end{definition}

Loose Legendrians were classified up to Legendrian isotopy in the article \cite{loose}, and although the definition presented above differs slightly from the one presented in \cite{loose}, both definitions are equivalent \cite[Section 4.2]{MNPS}. The following property, which is satisfied by loose Legendrians, but not by all Legendrians, will be most useful for us. It constitutes the characterizing property of loose Legendrians, and as a basic form of an $h$--principle \cite{ElMi} it indicates that this class of Legendrians is related to the flexible side of contact topology.

{\bf
\begin{thm}[\cite{loose}]\label{thm: c0 loose}
	Let $\Lambda \longrightarrow (Y, \xi)$ be a loose Legendrian submanifold with a loose chart $U \sse (Y,\xi)$. Let $f_t:Y \longrightarrow Y$ be a smooth isotopy such that $f_0=\mbox{id}$ is the identity map, and the restriction $f_t|_{\La\cap U}=\mbox{id}|_{\La\cap U}$ is the identity map on $\Lambda \cap U$ for all $t \in [0,1]$.
	
	Then there exists a contact isotopy $g_t:Y \longrightarrow Y$ such that $g_t|_{Y \sm U}$ is $C^0$--close to $f_t|_{Y\sm U}$.
\end{thm}
}

Theorem \ref{thm: c0 loose} is used in Proposition \ref{prop: flexible ot} below, which in turn is needed in Theorem \ref{thm: ps to OT}, proving the implication (3)$\Rightarrow$(1), and in Theorem \ref{thm: weinstein exist}, one of the main applications of Theorem \ref{thm:main}.

\begin{remark}
The $h$--principle for loose Legendrian embeddings, Theorem \ref{thm: c0 loose}, does not provide an isotopy which is $C^0$-close near the loose chart $U\sse(Y,\xi)$. However, our arguments will not rely on that, and Theorem \ref{thm: c0 loose} will suffice for our purposes.\hfill$\Box$
\end{remark}

Theorem \ref{thm: c0 loose} is the result the reader should have in mind whenever a loose Legendrian submanifold appears along the article, and it should be read as the fact that loose Legendrians behave according to their smooth topology \cite{CM2,loose}.

\subsection{The plastikstufe}

The plastikstufe is a particular germ of a contact submanifold in a contact manifold, which coincides with the overtwisted 2--disk in the 3--dimensional case \cite{El,Gr,PS} . It appears in Theorem \ref{thm:main} as one of the characterizations of higher--dimensional overtwistedness, and we now provide the details on its definition, first introduced in the article \cite{PS}.

\begin{remark}
The initial purpose of the plastikstufe was to provide a higher--dimensional object which obstructs symplectic fillings, in the same manner that the existence of an overtwisted 2--disk in a contact 3--fold prevents the existence of a 4--dimensional symplectic filling \cite{Gr}. This leads to the geometric idea of considering a parametric family of overtwisted 2--disks and, with the appropriate count of dimensions of moduli spaces, the definition of the plastikstufe \cite{PS}.\hfill$\Box$
\end{remark}

Let $\Op(D^2_\ot) \sse (\R^3,\xi_\ot)$ be a contact neighborhood of an overtwisted disk for any overtwisted contact structure $\xi_\ot=\ker\alpha_\ot$.

\begin{definition}
Let $Z$ be a closed manifold and $\Op(Z) \sse T^*Z$ a neighborhood of the zero section. The contact manifold $(\Op(D^2_\ot) \x \Op(Z),\ker(\alpha_\ot + \lambda_\std))$ is said to be a \emph{plastikstufe}. The submanifold $Z\sse\Op(D^2_\ot) \x \Op(Z)$ is the \emph{core} of the plastikstufe.\hfill$\Box$
\end{definition}

The authors have provided constructions of the plastikstufe \cite{CM,CPS}, and they arise naturally in the contact divisor sum of two contact manifolds along overtwisted contact divisors.

The remarkable fact about plastikstufes is that, thanks to Theorem \ref{thm:main}, not only they serve as obstructions to symplectic fillability but actually can be used to detect overtwistedness in any dimension.

The proof that we provide holds for a large class of plastikstufes, but there are two technical hypothesis that are needed in order for the argument to work. In order to state one of these two hypotheses, we introduce the following notion.

Given a contact manifold $(Y,\xi)$ and a smooth Legendrian embedding $f:\La\longrightarrow Y$, the {\it rotation class} of the Legendrian embedding $f$, also called the rotation class of $\La$, is the homotopy class of the induced injective bundle map $Tf:T\La\longrightarrow f^*\xi$, considered as a map in the space of Lagrangian bundle monomorphisms. See \cite[Definition A.1]{loose} and \cite[Section 4]{MNPS} for a more detailed discussion on the rotation class. Equipped with this notion, the two technical hypothesis are given in the following definition.

\begin{definition}
Let $\Lambda_l \sse \Op(D^2_\ot)$ be an open leaf of the characteristic foliation of the overtwisted disk. The plastikstufe $(\Op(D^2_\ot) \x \Op(Z),\ker(\alpha_\ot + \lambda_\std))$ has \emph{trivial rotation} if the open Legendrian submanifold $\Lambda_l \x Z$ has trivial rotation class.

Also, a plastikstufe $\Op(D^2_\ot) \x \Op(Z) \sse (Y, \xi)$ is said to be \emph{small} if it is contained in a smooth ball in ambient manifold $Y$.\hfill$\Box$
\end{definition}

Note that the rotation class of the Legendrian $\Lambda_l\x Z$ is well defined since the hyperplane field $\xi$ has a unique framing on the smooth ball up to homotopy. Also, observe that in the case $Q = S^{n-2}$, a plastikstufe $\Op(D^2_\ot) \x \Op(Z) \sse (Y, \xi)$ is both small and has trivial rotation if and only if $\Lambda_l \x Z$ , which is a Legendrian annulus $[0,1] \x S^{n-2}$, can be included into a Legendrian disk. Then an open neighborhood of the union of the Legendrian disk and the plastikstufe is diffeomorphic to a smooth ball, and since a Legendrian disk has a unique framing it induces a trivial framing on its boundary collar.

The following theorem from \cite{MNPS} gives in particular the implication (4)$\Rightarrow$(3).

\begin{thm}[\cite{MNPS}]\label{thm: PS to loose}
Let $\mathcal{P} \sse (Y, \xi)$ be a small plastikstufe with spherical core and trivial rotation, and $\Lambda \sse (Y,\xi)$ a Legendrian submanifold disjoint from $\mathcal{P}$.

Then the Legendrian $\Lambda\sse(Y, \xi)$ is a loose Legendrian submanifold.
\end{thm}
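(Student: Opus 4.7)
The plan is to manufacture a loose Legendrian sphere $L$ near $\mathcal{PS}$ that is Legendrian isotopic to the standard Legendrian unknot $\Lambda_0$, and then to transfer its looseness to $\Lambda$ via an ambient Legendrian connected sum.

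First I would construct $L$ inside the smooth ball $B$ containing $\mathcal{PS}$. The spherical--core and trivial--rotation hypotheses ensure, by the remark following the plastikstufe definitions, that the open--leaf annulus $\Lambda_0 \x S^{n-2} \sse \mathcal{PS}$ extends to a Legendrian disk $D \sse B$ with $\partial D \cong S^{n-2}$. Capping $D$ off by a second standard Legendrian disk along $\partial D$---which can be done inside $B$ away from $\mathcal{PS}$ using the trivial framing provided by trivial rotation---yields the closed Legendrian sphere $L$. The looseness of $L$ is then built into the plastikstufe itself: the annular portion $\Lambda_0 \x S^{n-2}$ sits inside $\Op(D^2_\ot) \x \Op(S^{n-2})$, and $\Lambda_0 \sse \Op(D^2_\ot)$ is a stabilized Legendrian arc in the overtwisted three--dimensional slice, so $L$ already contains a loose chart of the form $B^3 \x \Op(S^{n-2})$.

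Next, since $L$ bounds a ball in $B$ it is smoothly standard; the trivial--rotation hypothesis, together with the uniqueness of framings on a smooth ball, forces the formal Legendrian invariants of $L$ to agree with those of $\Lambda_0$. The uniqueness part of the loose Legendrian $h$--principle of \cite{loose} therefore provides a Legendrian isotopy $L \simeq \Lambda_0$. To finish, choose a smoothly embedded arc $\gamma$ from $\Lambda$ to $L$ inside $Y \sm \mathcal{PS}$ and form the Legendrian ambient connected sum $\Lambda' = \Lambda \# L$ along $\gamma$. The sum $\Lambda'$ inherits a loose chart from $L$ in the region away from the connect--sum tube, so $\Lambda'$ is loose; but because $L$ is Legendrian isotopic to the standard Legendrian unknot, connect summing with $L$ is trivial up to Legendrian isotopy, so $\Lambda' \simeq \Lambda$. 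Hence $\Lambda$ itself is loose, as claimed.

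The main obstacle I foresee is the formal--invariants step: one must rigorously verify that the capped--off sphere $L$ has trivial rotation class and trivial conformal--symplectic framing, so that the uniqueness statement in \cite{loose} applies and produces the isotopy $L \simeq \Lambda_0$. The capping disk has to be selected compatibly with the framings inherited along $\partial D$, and those framings are controlled precisely by the trivial--rotation assumption on $\mathcal{PS}$; tracking this framing data carefully through the construction is the technical heart of the argument.
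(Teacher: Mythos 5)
The decisive step that fails is your identification $L \simeq \Lambda_0$. The uniqueness part of the $h$--principle of \cite{loose} compares two \emph{loose} Legendrians: formally isotopic loose Legendrians are Legendrian isotopic. Your sphere $L$ is loose by construction, but the standard unknot $\Lambda_0$ is not known to be loose at this point --- ``$\Lambda_0$ is loose'' is exactly criterion 3a of Theorem \ref{thm:main}, equivalent to overtwistedness of $(Y,\xi)$, and Theorem \ref{thm: PS to loose} is one of the ingredients used to reach that conclusion, so you may not assume it. Without that, your inference rests on the false general principle that a loose Legendrian with the formal invariants of the unknot must be isotopic to the unknot: already in $(S^{2n-1},\xi_\std)$ one can stabilize the unknot along a sphere without changing its formal class, and the resulting loose sphere cannot be Legendrian isotopic to $\Lambda_0$, since looseness is an isotopy invariant and the standard sphere is not overtwisted. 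Since the triviality of the connected sum $\Lambda\# L$ is deduced from $L\simeq\Lambda_0$, the argument collapses here; indeed, if this step worked it would directly prove criterion 3a from the mere existence of the plastikstufe, short--circuiting most of the paper. There are also secondary gaps: the capping disk, the loose chart of $L$, and the isotopy realizing $\Lambda\#L\simeq\Lambda$ must all be kept away from $\Lambda$, whereas the hypothesis only makes $\Lambda$ disjoint from $\mathcal{PS}$, not from the ball $B$ or from the Legendrian disk extending the leaf annulus; moreover undoing a connected sum requires an isotopy of the whole configuration $(\Lambda,L,\gamma)$, not merely of $L$; and the assertion that a leaf of the characteristic foliation crossed with the core is literally a loose chart is itself a nontrivial comparison of definitions carried out in \cite{MNPS}.

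For comparison, the paper does not prove this statement: it is imported verbatim from \cite{MNPS}. The argument given there avoids any comparison with the unknot. In outline, the hypotheses (small, spherical core, trivial rotation) are used to extend the leaf annulus to a Legendrian disk, and one then constructs an explicit Legendrian isotopy which drags a small disk of $\Lambda$ around the plastikstufe, exhibiting $\Lambda$ as Legendrian isotopic to a copy of itself stabilized along the core sphere; the stabilized copy visibly contains a loose chart $B^3\x\Op(Z)$ with $Z\sse T^*S^{n-2}$ the zero section, so $\Lambda$ itself is loose. If you want to salvage your approach, the $h$--principle step must be replaced by a direct geometric argument of this kind, showing that the modification you perform on $\Lambda$ near the plastikstufe is undone by a genuine Legendrian isotopy rather than by an appeal to looseness of the unknot.
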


\begin{remark}
The sphericity hypothesis of the core of the plastikstufe in Theorem \ref{thm:main} can be readily generalized, but being able to remove the hypothesis on its smallness requires more effort. This has been recently achieved by Y.~Huang \cite{Huang} using Theorem \ref{thm:main}.\hfill$\Box$
\end{remark}

Now that we have defined the contact geometric objects appearing in characterizations 3 and 4, we must address Weinstein structures since they have a fundamental role in the proof of the equivalence (1)$\Longleftrightarrow$(3) in Theorem \ref{thm:main}.

\subsection{Weinstein manifolds}

This subsection contains a succinct treatment on Weinstein cobordisms, where we state the results that will be used in the proof of Theorem \ref{thm:main} related to Weinstein structures. The reader is invited to study the thorough account \cite{CE} for further results on these structures.

First, the study of Weinstein structures aims at the understanding of contact and symplectic structures from the Morse theoretical viewpoint. The theory of Morse functions in smooth topology intertwines with contact and symplectic topology by requiring a compatibility condition between the Morse functions and the symplectic structure \cite{CE}. The objects of interest are the content of the following definition.

\begin{definition}
A \emph{Weinstein cobordism} is a triple $(W, \lambda, f)$, where the pair $(W, d\lambda)$ is a compact symplectic manifold with boundary, $f: W \longrightarrow [0,1]$ is a Morse function such that $\dd W = \dd_-W \cup \dd_+W = f^{-1}(0) \cup f^{-1}(1)$, and the vector field $V_\lambda$ symplectic dual to the Liouville form $\lambda$ is a gradient--like vector field for the Morse function $f$.\hfill$\Box$
\end{definition}

From the definition it follows that the 1--form $\lambda|_{f^{-1}(c)}$ is a contact form on the submanifold $f^{-1}(c)$ for any regular value $c\in[0,1]$, and note that the descending manifold $D^k_p$ associated to any critical point $p$ of $f$ satisfies $\lambda|_{D^k_p} = 0$. In particular the submanifold $D^k_p$ is isotropic and thus
$$\op{ind}(p)=k \les n = \frac12\dim W.$$
Critical points with index strictly less than $n$ are called \emph{subcritical}, and a \emph{subcritical} Weinstein cobordism $(W,\lambda,f)$ is one where all critical points of $f$ are subcritical.

In the case that $c\in[0,1]$ is a regular value, the intersection $\Lambda_p^c = D^k_p \cap f^{-1}(c)$ is an isotropic submanifold of the contact manifold $(f^{-1}(c),\ker\lambda)$. In case $c\in[0,1]$ is a critical value with a unique critical point $p \in W$, the Weinstein cobordism $(f^{-1}([c-\e, c+\e]),\lambda,f)$ is determined, up to homotopy through Weinstein structures, by the contact manifold $(f^{-1}(c-\e),\ker\lambda)$ and the (parametrized) isotropic submanifold $\Lambda^{c-\e}_p$, together with a framing of the symplectic normal bundle, which is necessarily trivial. Hence the contact manifold $(f^{-1}(c+\e),\ker\lambda)$ is determined up to contactomorphism, and it is said to be obtained from $(f^{-1}(c-\e),\ker\lambda)$ by \emph{contact surgery along} the isotropic sphere $\Lambda^{c-\e}_p$. Notice that $(f^{-1}(c-\e) \sm \Lambda^{c-\e}_p,\ker\lambda)$ has a natural contact inclusion into $(f^{-1}(c+\e),\ker\lambda)$, defined by the flow of the gradient-like vector field $V_\lambda$. We refer to the monograph \cite{CE} for proofs of these statements and a more complete discussion of Weinstein handle attachments.

In the particular case in which $c\in[0,1]$ is a critical value of $f$ with a unique critical point $p$ of index $n$ then $\Lambda^{c-\e}_p\sse(f^{-1}(c-\e),\ker\lambda)$ is a Legendrian submanifold. If this Legendrian is loose, we say that the critical point $p$ is a \emph{flexible} critical point.

\begin{definition}\label{def:flex}
A Weinstein cobordism $(W,\lambda,f)$ is said to be \emph{flexible} if every critical point of $f$ is either subcritical or flexible.
\end{definition}

\begin{remark}
In $\dim(W)=4$, a critical point $p$ is called flexible if the Legendrian $\Lambda^{c-\e}_p$ has overtwisted complement. This dimension is however not discussed in this paper.\hfill$\Box$
\end{remark}

By Definition \ref{def:flex}, every subcritical Weinstein cobordism is flexible. The importance of Definition \ref{def:flex} is that flexible Weinstein manifolds are completely classified \cite[Chapter 14]{CE}.

In our case, we use flexible Weinstein cobordisms in relation to overtwisted contact manifolds. The first result we need to prove in this direction, which will be used in Section \ref{sec: cobord} for part of the proof of Theorem \ref{thm:main}, is the following proposition.

\begin{prop}\label{prop: flexible ot}
Let $(W,\lambda,f)$ be a flexible Weinstein cobordism such that $(\dd_-W,\ker\lambda)$ is an overtwisted contact manifold. Then the contact manifold $(\dd_+W,\ker\lambda)$ is overtwisted.
\end{prop}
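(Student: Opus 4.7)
The plan is to induct on the number of critical points of $f$. When $f$ has no critical points, the Liouville flow yields a contactomorphism $(\dd_-W,\ker\lambda)\cong(\dd_+W,\ker\lambda)$ and the conclusion is immediate. Otherwise, decomposing at a regular value of $f$ reduces the problem to a single elementary handle attachment along an isotropic sphere $\Lambda\sse(\dd_-W,\ker\lambda)$ that is either subcritical or a loose Legendrian.

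The key structural observation is that the Liouville flow of $V_\lambda$ provides a contact embedding
\[
(\dd_-W\sm\Op(\Lambda),\ker\lambda)\hookrightarrow(\dd_+W,\ker\lambda),
\]
so it suffices to exhibit an overtwisted chart $D\sse\dd_-W$ whose closure is disjoint from $\Op(\Lambda)$. Since the contact surgery is determined up to contactomorphism by the isotropic, respectively Legendrian, isotopy class of $\Lambda$, we have the freedom to move $\Lambda$ through such isotopies without altering the Weinstein cobordism up to homotopy.

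For subcritical $\Lambda$, Gromov's $C^0$-dense $h$-principle for subcritical isotropic embeddings allows us to realise any smooth isotopy of $\Lambda$ by an isotropic one. For a flexible critical $\Lambda$, Theorem \ref{thm: c0 loose} yields the analogous $C^0$-dense Legendrian isotopy. In either case it suffices to smoothly isotope $\Lambda$ off a compact overtwisted chart $D\sse\dd_-W$ (obtained by restricting the contact embedding from characterization (2b) to a compact subset of $\R^3\x D^{2n-4}(R)$). Since $\Lambda$ is an $(n-1)$-sphere in codimension $n\ges 3$ and $\overline D$ is a compact contractible codimension-$0$ submanifold-with-boundary, a standard smooth-topology argument (using the high codimension and the homotopy equivalence $\dd_-W\sm\overline D\simeq \dd_-W\sm\{\pt\}$) produces the required smooth isotopy.

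The principal obstacle is controlling the final $C^0$ approximation: one must ensure that the isotropic or Legendrian isotopy output by the $h$-principle truly avoids $\overline D$, not merely a small neighborhood of the smoothly approximated sphere. This is handled by first smoothly disjointing $\Lambda$ from a slightly thickened closed chart $\overline{D'}\supset\overline D$ and then taking the $C^0$-perturbation finer than the thickening, using that overtwisted charts form an open family. With this disjointness achieved, the Liouville flow transports $D$ to an overtwisted chart inside $(\dd_+W,\ker\lambda)$, closing the induction.
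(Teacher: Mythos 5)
Your argument is correct and is essentially the paper's proof: split the cobordism into elementary pieces with one critical point each, use the $h$-principle for subcritical isotropic embeddings and Theorem \ref{thm: c0 loose} to $C^0$-approximate a smooth isotopy displacing each attaching sphere from a compact overtwisted region, and then transport the overtwisted data to the next level via the Liouville flow on the complement of the attaching locus. The only cosmetic difference is that you extract the compact overtwisted chart from criterion (2b), whereas the paper simply uses the (already compact) overtwisted disk provided directly by the definition of overtwistedness, so no appeal to the equivalence $1=2b$ is needed.
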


\begin{proof}
First, split the cobordism $(W, \lambda, f)$ into cobordisms with a single critical point
$$W = f^{-1}([0, c_1]) \cup \ldots \cup f^{-1}([c_s, 1]),\quad\mbox{for }0<c_1<\ldots<c_s<1.$$

The resulting attaching spheres $\Lambda_j \sse (f^{-1}(c_j),\ker\lambda)$, for $1\leq j\leq s$, are either subcritical or loose Legendrians submanifolds, and we will now show by induction that each contact manifold $(f^{-1}(c_j),\ker\lambda)$ is overtwisted. The $j=1$ case follows from the fact that $(\dd_- W,\ker\lambda)$ is overtwisted, and the case $j=s$ case implies the result. The contact manifold $(f^{-1}(c_{j+1}),\ker\lambda)$ is obtained from $(f^{-1}(c_j),\ker\lambda)$ by a single Weinstein surgery along the isotropic sphere $\Lambda_j$, and any smooth isotopy of $\Lambda_j$ can be $C^0$--approximated by a contact isotopy. Indeed, if $\Lambda_j$ is subcritical this follows from the $h$--principle for subcritical isotropic submanifolds \cite{PDR}, and if $\Lambda_j$ is a loose Legendrian this is Theorem \ref{thm: c0 loose}. In particular, we can find a contact isotopy which makes the attaching isotropic sphere $\Lambda_j$ disjoint from any overtwisted disk in $(f^{-1}(c_j),\ker\lambda)$.
\end{proof}

Finally, we define a vertical connected sum operation of Weinstein cobordisms. For that, let $(W_1, \lambda_1, f_1)$ and $(W_2, \lambda_2, f_2) $ be two Weinstein cobordisms with non-empty negative boundary, and choose two points $p_1 \in \dd_-W_1$ and $p_2 \in \dd_-W_2$ which are not in the descending manifold of any critical point. Let $\gamma_1$ and $\gamma_2$ be the image curves of the points $p_1$ and $p_2$ by the flow of the gradient-like vector fields $V_{\lambda_1}$ and $V_{\lambda_2}$, and thus $\gamma_1 \sse W_1$ and $\gamma_2 \sse W_2$ are two curves which intersect transversely every level set of their corresponding ambient cobordisms exactly once.

We define the connected sum cobordism as the smooth cobordism
$$W_1 \ol{\#} W_2 = (W_1 \sm \Op(\gamma_1)) \cup (W_2 \sm \Op(\gamma_2)),$$
where the union glues a collar neighborhood of $\dd \Op(\gamma_1)$ to a collar neighborhood of $\dd \Op(\gamma_2)$ with a map that pulls back the Liouville form $\lambda_2$ to the Liouville form $\lambda_1$ and the Morse function $f_2$ to the Morse function $f_1$. The smooth manifold $W_1 \ol{\#} W_2$ then inherits a Weinstein structure $(W_1 \ol{\#} W_2,\lambda, f)$, the critical set of $f$ being the union of the critical sets of $f_1$ and $f_2$, and every regular level set $(f^{-1}(c),\ker\lambda)$ being contactomorphic to the contact connected sum $(f_1^{-1}(c),\ker\lambda_1) \# (f^{-1}_2(c),\ker\lambda_2)$. The Weinstein manifold $(W_1 \ol{\#} W_2,\lambda,f)$ is the \emph{vertical connected sum} of $(W_1,\lambda_1,f_1)$ and $(W_2,\lambda_2,f_2)$.

\begin{remark}
This operation is used in \cite[Section 5]{MNPS} to construct contactomorphisms using the flexible Weinstein $h$-cobordism theorem \cite{CE}, we use the vertical connected sum in Sections \ref{sec: cobord} and \ref{ssec:cons}.\hfill$\Box$
\end{remark}

The connected sum cobordism and flexible Weinstein structures have a crucial role in the cobordism arguments proving Theorem \ref{thm: ps to OT} and Theorem \ref{thm: weinstein exist}. Note also the Weinstein cobordisms are the natural context in which contact surgeries, either positive or negative, arise. Further discussion on contact $(+1)$--surgeries appears in Section \ref{sec: surgery}, but for now we move forward and complete the preliminaries concerning the objects appearing in Theorem \ref{thm:main}, that is, we discuss the statement of the sixth equivalence (1)$\Longleftrightarrow$(6) in Theorem \ref{thm:main}, concerning open book decompositions.

\subsection{Open book decompositions}\label{ssec: OB intro} Open books compatible with a contact structure have a central role in contact topology \cite{Gi2,Wa}. Theorem \ref{thm:main} states that it is possible to characterize higher--dimensional overtwistedness in terms of compatible open book decompositions. In this subsection we review the basic facts about open book decompositions relevant for the statement of Theorem \ref{thm:main} and its proof.

Let $(W, \lambda)$ be a Liouville domain, i.e.~an exact symplectic manifold with the Liouville vector field $V_\lambda$ outwardly transverse to the smooth boundary $\dd W$, and $\p: W \longrightarrow W$ a compactly supported exact symplectomorphism, such that $\p^*\lambda = \lambda + dh$ for some compactly supported function $h\in C^\infty_c(W)$. The triple $(W, \lambda, \p)$ is an \emph{open book decomposition} \cite{Co2,Gi2}, and the Liouville domain $(W,\la)$ is referred to as its {\it page}.

Every open book decomposition $(W,\lambda,\p)$ canonically defines a contact manifold $(Y, \xi)$, which is constructed as the mapping torus
$$Y = W \x [0,1] / (x, 1) \sim (\p(x), 0) \underset{\dd W \x S^1}{\cup} \dd W \x D^2 $$
$$\xi = \ker\left( (\lambda + K d\theta + \theta dh) \cup (\lambda|_{\dd W} + K r^2 d\theta)\right).$$
for a sufficiently large $K\in\R^+$. We write $(Y, \xi) = \op{OB}(W, \lambda, \p)$ to denote this relationship, and say that $(Y,\xi)$ is \emph{compatible with} or \emph{supported by} the open book $(W, \lambda, \p)$. Notice that the construction readily implies the contactomorphism $\op{OB}(W, \lambda, \p) \cong \op{OB}(W, \lambda, \psi\circ\p\circ\psi^{-1})$ for any symplectomorphism $\psi$.

The remarkable feature of open book decomposition in relation to contact structures is that the converse also holds. This is E.~Giroux's existence theorem \cite{Gi2}:

\begin{thm}[\cite{Gi2}]\label{ob exist}
Every contact manifold $(Y, \xi)$ can be presented as $(Y, \xi) = \op{OB}(W, \lambda, \p)$, and there exists a Morse function $f:W\longrightarrow[0,1]$ such that $(W, \lambda, f)$ is a Weinstein manifold.
\end{thm}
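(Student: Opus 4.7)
The plan is to carry out the Donaldson--Giroux construction of open books via approximately holomorphic sections. Choose a contact form $\alpha$ with $\ker\alpha = \xi$ and an almost complex structure $J$ on $\xi$ compatible with $d\alpha$, so that $(\xi, d\alpha, J)$ is an almost K\"ahler bundle. Pick a Hermitian line bundle $L \longrightarrow Y$ whose connection has curvature proportional to $-i\, d\alpha$, and work with the sequence of tensor powers $L^{\otimes k}$ as $k \to \infty$.

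The main step is to produce a family of approximately $J$-holomorphic sections $s_k \in \Gamma(L^{\otimes k})$ satisfying Donaldson-style uniform transversality estimates: $s_k$ is quantitatively transverse to the zero section, and the phase map $\p_k := s_k/|s_k| : Y \sm s_k^{-1}(0) \longrightarrow S^1$ is quantitatively transverse to regular values, with bounds independent of $k$. These sections are built from local Gaussian peak models pulled back from Darboux charts and globalized by Donaldson's perturbation scheme. Setting $B_k := s_k^{-1}(0)$ as the candidate binding and $W_k := \p_k^{-1}(\pt)$ as a page, one verifies for $k$ large that the Reeb field $R_\alpha$ is positively transverse to the pages and tangent to the binding; this follows from approximate holomorphicity once the transversality thresholds are met. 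The first-return map of $R_\alpha$ then provides the monodromy $\p$, yielding $(Y,\xi) = \op{OB}(W_k, \alpha|_{W_k}, \p)$.

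To produce the Weinstein structure on the page, observe that $\alpha|_{W_k}$ is a Liouville form whose Liouville vector field $V_{\alpha|_{W_k}}$ is outwardly transverse to $\dd W_k = B_k$, again by approximate holomorphicity. A plurisubharmonic Morse function for $J|_{TW_k}$, obtained by a small generic perturbation of $-\log|s_k|$ restricted to $W_k$ (or alternatively via a Moser-type argument using the flexibility of Liouville forms), supplies the required $f$ with $V_{\alpha|_{W_k}}$ gradient-like.

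The principal obstacle is the approximately holomorphic analysis itself: one must establish simultaneous \emph{uniform} quantitative transversality estimates for $s_k$ and $d\p_k$ in a contact (CR) setting, which requires handling the Reeb direction separately from the $\xi$-directions and then interlocking the estimates. Once that quantitative package is in place, the identification as an open book and the Weinstein upgrade of the page proceed along standard lines, but the underlying section theory is technically heavy machinery analogous to Donaldson's original construction of symplectic divisors.
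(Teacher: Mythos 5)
You are not deviating from the paper here, because the paper gives no proof of this statement at all: it is quoted as Giroux's theorem from \cite{Gi2}, and your outline is precisely the Giroux--Mohsen approximately holomorphic argument (the contact analogue of Donaldson's section theory) on which that cited result rests, including the Weinstein/Stein structure on the pages. So your approach coincides with that of the source; just be aware that the uniform quantitative transversality package you defer (and, secondarily, the care needed to make the return map of a vector field transverse to the pages an exact symplectomorphism serving as monodromy) is essentially the entire technical content, so as written this is a faithful roadmap rather than a self-contained proof.
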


Hence the study of contact manifolds can be approached as the study of Weinstein structures and their compactly supported symplectomorphisms. Let $(W, \lambda)$ be a Liouville manifold, and suppose it contains a parametrized Lagrangian sphere $L \sse (W,\lambda)$. We denote the Dehn twist \cite{Co2,Se97} around the Lagrangian sphere $L$ by $\tau_L \in \op{Symp}_c(W)$, where we have extended the Dehn twist $\tau_L\in\op{Symp}_c(\Op(L))$ to a compactly supported symplectomorphism of the ambient Weinstein manifold $(W,\la)$ by using the identity on the complement $W\setminus\Op(L)$.

Note that a Lagrangian sphere $L$ is an exact Lagrangian the moment $\dim(L)\geq2$, and thus $L\sse(W,\la)$ defines a Legendrian sphere $\Lambda$ in the contact manifold $\op{OB}(W, \lambda, \p)$ obtained by integrating the exact form $\lambda|_L$. We denote the relation between the exact Lagrangian $L$ and its Legendrian lift $\Lambda$ by the equality
$$(Y, \xi, \Lambda) = \op{OB}(W, \lambda, \p, L).$$
This equality is defined to contain two statements. First, the contact manifold $(Y,\xi)$ is adapted to the open book decomposition $\op{OB}(W, \lambda, \p)$, where $(W,\lambda)$ is the Liouville page and $\p\in\Symp^c(W,\la)$ is the symplectic monodromy. Second, the Legendrian $\La\sse(Y,\xi)$ is Legendrian isotopic to the Legendrian lift of an exact Lagrangian $L\sse(W,\lambda)$ embedded in the Liouville page.

\begin{remark}
The equality $(Y, \xi, \Lambda) = \op{OB}(W, \lambda, \p, L)$ is not an existence theorem, i.e. it is not meant to state that for {\it any} Legendrian $\La\sse(Y,\xi)$, there exists an open book $\op{OB}(W, \lambda, \p)$ supporting $(Y,\xi)$ and an exact Lagrangian $L\sse(W,\la)$ whose Legendrian lift is isotopic to $\La$. The equality is only used when the existence of such $L\sse(W,\la)$ is known and the equality is the notation we use to specify that data.\hfill $\Box$
\end{remark}

The conjugation invariance stated above Theorem \ref{ob exist} now reads
$$\op{OB}(W, \lambda, \p, L) = \op{OB}(W, \lambda, \psi \circ \p \circ \psi^{-1}, \psi(L))$$
as it can be readily verified by considering $\Lambda$ as being near the page $\theta = 0$.

The following proposition relates Dehn twists of exact Lagrangian on the page of an open book $(W,\la,\p)$ with contact surgeries on the the associated contact manifold:

\begin{prop}[\cite{Ko}]\label{prop: ob surgery}
Suppose that $(Y, \xi, \Lambda) = \op{OB}(W, \lambda, \p, L)$, then the contact manifold $\op{OB}(W, \lambda, \p \circ \tau_L)$ is obtained from $(Y, \xi)$ by contact surgery along $\Lambda$.
\end{prop}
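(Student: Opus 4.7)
The plan is to realize both contact manifolds $\op{OB}(W,\lambda,\p)$ and $\op{OB}(W,\lambda,\p\circ\tau_L)$ as the two boundary components of a single Weinstein cobordism $X$, and then identify $X$ with the Weinstein $n$-handle attached to the symplectization of $(Y,\xi)$ along $\Lambda$. Together, these two identifications give the claim directly from the definition of contact surgery reviewed in the Weinstein subsection.

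First, I would build $X$ as a Weinstein Lefschetz fibration $\pi\colon X \longrightarrow A$ over the annulus $A = S^1 \x [0,1]$, with regular fiber $(W, d\lambda)$ and a single Lefschetz singularity over an interior point of $A$ whose vanishing cycle is the parametrized Lagrangian sphere $L \sse W$. The monodromy of $\pi$ around the inner boundary circle is arranged to be $\p$, and by the Picard--Lefschetz formula the monodromy around the outer boundary circle is then $\p\circ\tau_L$. Gluing in the binding $\dd W \x D^2$ to both boundary components in the standard way, we obtain $\dd_-X = \op{OB}(W,\lambda,\p) = (Y,\xi)$ and $\dd_+X = \op{OB}(W,\lambda,\p\circ\tau_L)$.

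Next, I would equip $X$ with a Weinstein structure whose defining Morse function has a unique critical point, located at the Lefschetz singularity. Locally near this point $\pi$ has the standard form $\pi(z_1,\dots,z_n) = z_1^2 + \cdots + z_n^2$, and a judicious choice of Liouville form on this model presents the singularity as the core of a Weinstein $n$-handle whose descending Lagrangian disk is the Lefschetz thimble. The thimble meets a nearby page in the vanishing cycle $L$, which by the definition of $\op{OB}(W,\lambda,\p,L)$ is identified with the Legendrian $\Lambda \sse (Y,\xi)$. Hence the attaching sphere of this handle is Legendrian isotopic to $\Lambda$, and $X$ is precisely the trace of a Weinstein $n$-handle attachment along $\Lambda$. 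By the definition recalled in the Weinstein subsection, this means $\dd_+X = \op{OB}(W,\lambda,\p\circ\tau_L)$ is obtained from $(Y,\xi)$ by contact surgery along $\Lambda$.

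The hardest step is the last identification: matching the local Weinstein model of the Lefschetz singularity with the standard Weinstein $n$-handle and verifying that its attaching sphere is \emph{Legendrian} isotopic, and not merely smoothly isotopic, to $\Lambda$. This requires a Liouville form near the critical point that is simultaneously compatible with the fibration structure on $X$ and with the contact form on $\dd_-X$ coming from the open book compatibility equations. The construction of the Weinstein Lefschetz fibration $X$ and the Picard--Lefschetz monodromy computation are then standard, so the content of the proposition is concentrated in this local compatibility check.
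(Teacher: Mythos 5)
The paper does not prove this proposition at all: it is quoted as a known result, with a citation to van Koert's lecture notes \cite{Ko}, so there is no internal argument to compare yours against. Your outline is essentially the standard argument underlying that citation, and it is correct in spirit: the trace of the surgery is the total space of a Weinstein Lefschetz fibration with one critical point whose vanishing cycle is $L$, Picard--Lefschetz gives the change of monodromy by $\tau_L$, and the critical point is a Weinstein $n$-handle whose thimble meets a page in $L$, hence whose attaching sphere is the Legendrian lift $\Lambda$. Two caveats are worth flagging. First, the boundary of $\pi^{-1}(A)$ for $A=S^1\x[0,1]$ is not just the two mapping tori: it also has the horizontal piece (a $\dd W$-bundle over $A$), so ``gluing in the binding $\dd W\x D^2$ to both boundary components'' really means rounding corners between vertical and horizontal boundary and checking that the induced contact structures on the resulting convex/concave ends are the ones given by the open book compatibility equations; this is the same genuinely nontrivial local work you defer to the end, not a separate easy step. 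Second, the argument in the cited notes runs more economically in the opposite direction: place $\Lambda$ on (a small push-off of) a page of $\op{OB}(W,\lambda,\p)$, attach the Weinstein $n$-handle to a collar of the symplectization along $\Lambda$ with the page framing, and verify directly that the new boundary is supported by the open book with page $W$ and monodromy $\p\circ\tau_L$; this avoids having to build the global Lefschetz fibration and its Weinstein structure first. Either route works, but in yours the ``local compatibility check'' you isolate is exactly where the whole content lives, so a complete write-up would need that model computation (standard Weinstein handle versus the $z_1^2+\cdots+z_n^2$ singularity, and the identification of the thimble's boundary with the lift of $L$ obtained by integrating $\lambda|_L$) spelled out rather than asserted.
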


Note that both the mapping class $[\tau_L] \in \pi_0 \op{Symp}_c(W)$ and the contact surgery along $\Lambda$ depend on a parametrizations $S^{n-1} \cong L$ and $S^{n-1} \cong \Lambda$, which is often non-canonical. The diffeomorphism $\Lambda \cong L$ is however canonically given by projection to the page $(W,\lambda)$.

The remaining ingredient to be discussed in relation to compatible open books is the stabilization procedure. Consider a Lagrangian disk $D \sse (W,\lambda)$ with Legendrian boundary $\dd D \sse (\dd W,\ker\lambda)$ and attach a Weinstein handle to $(W,\lambda)$ along the Legendrian sphere $\dd D$, obtaining a new Weinstein manifold $(W \cup H, \lambda')$. Let us assume that the smooth parametrization of the Legendrian boundary $\dd D \sse (\dd W,\ker\lambda)$ is such that the Lagrangian sphere $S$, whose lower hemisphere is the Lagrangian disk $D$ and whose upper hemisphere is the core of the handle $H$, is a smoothly standard sphere. See \cite[Section 6.3]{GP} and \cite{We91} for further details on Weinstein handle attachments.

With this assumption, the new Weinstein manifold $(W \cup H, \lambda')$ contains a Lagrangian sphere $S$, smoothly standard, whose lower hemisphere is the Lagrangian disk $D$ and whose upper hemisphere is the core of the handle $H$. Then, the new open book decomposition $(W \cup H, \lambda', \p \circ \tau_L)$ is said to be the \emph{positive stabilization} of $(W, \lambda, \p)$ along $D$, and $(W \cup H, \lambda', \p \circ \tau_L^{-1})$ is referred to as the \emph{negative stabilization} of $(W, \lambda, \p)$ along $D$ \cite{Co2,Ko}.

Both the positive and the negative stabilization of an open book decomposition can be described as a contact connected sum. This description is the content of the following theorem.

\begin{thm}[E.~Giroux]\label{thm:NegStabCS}
Let $(Y, \xi)=\op{OB}(W, \lambda, \p)$ be a contact manifold, $D\sse(W,\lambda)$ any Lagrangian disk with Legendrian boundary $\dd D \sse (\dd W,\ker\lambda)$, and consider the contact structure $(S^{2n-1}, \xi_-)=\op{OB}(T^*S^{n-1}, \tau^{-1})$.

Then the positive and negative stabilizations of $(W,\lambda, \p)$ along $D$ are diffeomorphic to $Y$. The positive stabilization is contactomorphic to $(Y, \xi)$, and the negative stabilization is contactomorphic to the contact connected sum $(Y \# S^{2n-1}, \xi \# \xi_-)$.\hfill$\Box$
\end{thm}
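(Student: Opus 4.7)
The strategy is to identify the positive (resp. negative) stabilization of $(W,\lambda,\p)$ along $D$ with the contact connected sum of $(Y,\xi)$ with the model contact sphere $\op{OB}(T^*S^{n-1},\tau^{\pm 1})$. The main technical tool is the standard fact, essentially due to Giroux, that for any two open books one has
$$\op{OB}(W_1 \natural W_2, \lambda_1 \natural \lambda_2, \p_1 \cup \p_2) \;=\; \op{OB}(W_1,\lambda_1,\p_1) \;\#\; \op{OB}(W_2,\lambda_2,\p_2),$$
where $\natural$ denotes boundary connected sum of Liouville domains and $\p_1\cup\p_2$ is the symplectomorphism restricting to each $\p_i$ on its respective factor. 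One verifies this by arranging the $S^1$-family of pages so that the binding of the connect-sum open book contains a trivial component across which the mapping torus splits, realising a separating contact sphere along which the contact connected sum is formed.

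The geometric step is to identify the stabilised page with such a boundary connected sum. Recall that $T^*S^{n-1}$ is itself obtained, as a Weinstein handlebody, from a ball $D^{2n-2}$ by attaching a single $n$-handle along a standard Legendrian unknot $\La' \sse \partial D^{2n-2}$ bounding a Lagrangian disk $D'$; the resulting Lagrangian sphere, with lower hemisphere $D'$ and upper hemisphere the handle core, is the zero section of $T^*S^{n-1}$. Performing a boundary connected sum of $W$ with such a model ball near $\partial D$, chosen so that $D$ is pushed into the ball and matched with $D'$, the Weinstein handle producing $W \cup H$ coincides with the handle producing $T^*S^{n-1}$ from the model ball. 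This identifies $(W \cup H, \lambda')$ with $(W \natural T^*S^{n-1}, \lambda \natural \lambda_{T^*S^{n-1}})$ as Weinstein domains, and sends the Lagrangian sphere $S = D \cup \op{core}(H)$ to the zero section of the $T^*S^{n-1}$ factor.

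Since $\tau_S$ is compactly supported in an arbitrarily small neighborhood of $S$, this identification turns the stabilised monodromy $\p \circ \tau_S^{\pm 1}$ into $\p \cup \tau^{\pm 1}$ on the boundary connected sum, where $\tau$ is the Dehn-Seidel twist along the zero section of $T^*S^{n-1}$. Applying the displayed identity then yields
$$\op{OB}(W \cup H, \lambda', \p \circ \tau_S^{\pm 1}) \;=\; (Y,\xi) \;\#\; \op{OB}(T^*S^{n-1}, \tau^{\pm 1}),$$
and the theorem follows from the classical Milnor-fibration description $\op{OB}(T^*S^{n-1},\tau) = (S^{2n-1},\xi_\std)$ together with the definition of $\xi_-$ as $\op{OB}(T^*S^{n-1},\tau^{-1})$, using that contact connect sum with $(S^{2n-1},\xi_\std)$ is trivial.

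I expect the main obstacle to be making rigorous the Weinstein-equivalence in the second paragraph: comparing the $n$-handle attached to $W$ along the Legendrian $\partial D$ with the $n$-handle internal to the summand $T^*S^{n-1}$, compatibly with the Liouville structures and with the matching of $D$ to the model disk $D'$. This requires a standard contact neighborhood theorem near $\partial D$ together with a Moser-type interpolation between the two Liouville forms, where the flexibility ultimately comes from the conjugation-invariance of $\op{OB}$ by ambient symplectomorphisms of the page.
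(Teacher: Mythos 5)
You should first note that the paper offers no proof of Theorem \ref{thm:NegStabCS}: it is quoted as Giroux's theorem (cf.\ \cite{Gi2,Co2,Ko}), so your argument has to stand on its own. Your displayed identity $\op{OB}(W_1\natural W_2,\p_1\cup\p_2)=\op{OB}(W_1,\p_1)\#\op{OB}(W_2,\p_2)$ is indeed a correct standard fact, but the pivotal geometric step fails: for a general Lagrangian disk $D$ it is simply false that $(W\cup H,\lambda')$ is Weinstein-equivalent to $W\natural T^*S^{n-1}$ with $S$ the zero section of the second summand. You cannot ``push $D$ into the model ball near $\partial D$'': only a collar of $\partial D$ lies near $\partial W$, while $D$ itself may be topologically essential in the page. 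A counterexample is exactly the case this paper needs in Section \ref{ssec:1=5}: $W=T^*S^{n-1}$ and $D$ a cotangent fibre, where $W\cup H$ is the $A_2$-plumbing of two copies of $T^*S^{n-1}$ whose zero sections $L$ and $S$ intersect in one point; this is not $T^*S^{n-1}\natural T^*S^{n-1}$, since there the two spheres have zero algebraic intersection and the boundaries already differ (for $n-1=2$, the lens space $L(3,2)$ versus $\R P^3\#\R P^3$). So the issue you flagged at the end as a Moser-type technicality is in fact a false statement, and with it the reduction collapses; your argument only treats the unrepresentative case where $D$ lies in a collar of $\partial W$, whereas part of the content of the theorem is precisely that the answer is independent of $D$.

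The known proofs get around this by working with the ambient contact manifold rather than the page. Adding the handle $H$ to the page along $\partial D$ without changing the monodromy amounts to a subcritical (index $n-1$) contact surgery on $(Y,\xi)$, and composing the monodromy with $\tau_S^{\pm1}$ amounts to a contact $(\mp1)$-surgery along the Legendrian lift of $S$ in a page (Proposition \ref{prop: ob surgery} for the convex case, and the concave $(+1)$-surgery of Section \ref{sec: surgery} for the other sign). Because $S$ runs over the core of $H$ exactly once, its Legendrian lift meets the belt sphere of the subcritical handle transversely in one point, so the pair of modifications can be localized inside a Darboux ball of $(Y,\xi)$: for the positive sign the two handles cancel, recovering $(Y,\xi)$, while for the negative sign the local modification is by definition a connected sum with what this pair produces out of $(S^{2n-1},\xi_\std)$, namely $\op{OB}(T^*S^{n-1},\tau^{-1})=(S^{2n-1},\xi_-)$. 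If you want to keep your boundary-connect-sum picture, you must either restrict to disks $D$ contained in a collar and then prove independence of $D$ separately, or replace the page-level identification by this cancellation/localization argument.
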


This result is due to E.~Giroux, but there is no detailed account on it available on the literature, it is however well--known to experts, and an outline of the proof can be found in the article \cite[Proposition 2.6]{CM}. To the knowledge of the authors, E.~Giroux and his collaborators are currently writing a more detailed source.

\section{Thick neighborhoods of overtwisted submanifolds}\label{sec:OTxD2}
In this section we begin the proof of Theorem \ref{thm:main} with the equivalence (1)$\Longleftrightarrow$(2). In transparent terms, the equivalence states that a contact manifold $(Y,\xi)$ is overtwisted if and only if it contains an overtwisted contact submanifold $(N,\xi_\ot)$ with an infinite contact neighborhood. In fact, as noted in Remark \ref{rmk:2a=2b} above, this is equivalent to the existence of an overtwisted contact submanifold with an arbitrarily large, but finite, contact neighborhood. This latter characterization is the result we prove in the main theorem of this section, Theorem \ref{thm:OTxD2}.

In here, we are measuring the size of a contact neighborhood in terms of the maximal radius that can be achieved in the normal form for the contact structure in a neighborhood of a contact submanifold \cite[Section 2.5.3]{Ge}.

\begin{remark}
Technically, the radius exists as a global coordinate only if the conformal symplectic normal bundle is trivial, however in order to detect overtwistedness it suffices to restrict the symplectic normal bundle to a neighborhood of an overtwisted disk in the contact submanifold, in which case the normal bundle becomes trivial and the distance to the zero section provides a well--defined radius coordinate.\hfill$\Box$
\end{remark}

The equivalence (1)$\Longleftrightarrow$(2) thus becomes a statement about the behaviour of overtwisted contact manifolds after a large enough thickening. This first equivalence in Theorem \ref{thm:main} is the content of the following theorem.
\begin{thm}\label{thm:OTxD2}
Let $(N^{2n-1},\ker\alpha_{ot})$ be an overtwisted contact structure. Then for a sufficiently large radius $R\in\R^+$, the contact manifold $(N \x D^2(R), \ker(\alpha_\ot + \lambda_\std))$ is overtwisted. 
\end{thm}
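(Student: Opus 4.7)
The plan is to exhibit a contact embedding of the standard BEM model overtwisted disk for a $(2n+1)$-dimensional contact manifold into $(Y \x D^2(R), \ker(\alpha_\ot + \lambda_\std))$, for $R$ exceeding a threshold depending only on $\alpha_\ot$ and $n$. Since overtwistedness is defined by the existence of such an embedding, this would prove the theorem. The overtwistedness of $(Y, \ker\alpha_\ot)$ already provides a contact embedding of the $(2n-1)$-dimensional model into $Y$, and the natural idea is to thicken this embedding by the $D^2(R)$-factor and then detect a copy of the $(2n+1)$-dimensional model inside that product.

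The first substantive step is to put the BEM overtwisted disk into a normal form compatible with such a product structure. The model is constructed inductively on dimension starting from the three-dimensional overtwisted disk $D^2_\ot \sse (\R^3, \ker\alpha_\ot)$, so it is natural to seek it as a Liouville thickening of $D^2_\ot$. Concretely, the claim I would aim for is that a neighborhood of the $(2m+1)$-dimensional BEM model admits a contact embedding into
$$(D^2_\ot \x D^{2m-2}(R_m), \ker(\alpha_\ot + \lambda_\std))$$
for an explicit $R_m = R_m(\alpha_\ot, m)$. This is the technical core of the argument, and the main obstacle; I expect it to require either a careful unwinding of the BEM construction to extract the constants, or a deformation argument that exploits the contractibility of the space of overtwisted contact structures within a fixed almost contact homotopy class to replace the BEM model with a tailor-made one in product form.

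Granting this normal form, the rest is formal. Applied in dimension $2n-1$, the overtwistedness of $Y$ yields a contact embedding of $D^2_\ot \x D^{2n-4}(R_{n-1})$ into $Y$, and taking products with $D^2(R)$ gives
$$D^2_\ot \x D^{2n-4}(R_{n-1}) \x D^2(R) \hookrightarrow Y \x D^2(R),$$
equipped with the contact form $\alpha_\ot + \lambda_\std + \lambda_\std$ on the left. For $R$ large enough that $D^{2n-4}(R_{n-1}) \x D^2(R)$ Liouville-contains a round disk $D^{2n-2}(R_n)$ of the radius required by the one-higher-dimensional normal form, the composition embeds $D^2_\ot \x D^{2n-2}(R_n)$, and hence the $(2n+1)$-dimensional BEM overtwisted model, into $Y \x D^2(R)$, proving that this product is overtwisted. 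An alternative route that avoids the detailed normal form analysis would be to build a small plastikstufe with spherical core and trivial rotation inside $Y \x D^2(R)$ using the three-dimensional overtwisted disk in $Y$ together with the $D^2(R)$-factor, but this would have to be arranged carefully so as not to depend circularly on other implications of Theorem \ref{thm:main}.
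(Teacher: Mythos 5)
Your assembly step (``the rest is formal'') contains two concrete errors, independent of the unproven normal form. First, a direction error: the definition of overtwistedness of $Y$ only provides the germ of the $(2n-1)$-dimensional model along a piecewise smooth disk; it does not give a contact embedding of the thickened product $\Op(D^2_\ot)\x D^{2n-4}(R_{n-1})$ into $Y$, and your normal-form claim points the wrong way to supply one (it embeds the model \emph{into} the product, not the product into $Y$). Producing such a product neighborhood of definite size inside $Y$ is essentially assertion 2b of Theorem \ref{thm:main}, and it requires the isocontact-embedding $h$-principle of \cite{BEM} -- which is exactly how the paper places its local model inside $Y$. Even granting that, the $h$-principle gives an isocontact, not a strict, embedding, so crossing with $\operatorname{id}_{D^2(R)}$ does not pull back $\alpha_\ot+\lambda_\std$ to the sum form; the conformal factor must be absorbed by rescaling the fiber coordinate (the paper's bookkeeping with $c_f$, $\wt K=(c_f\cdot K)\circ f^{-1}$ and the map $F$). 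Second, the fattening step fails by Gromov non-squeezing: $D^{2n-4}(R_{n-1})\x D^2(R)$ sits inside $D^2(R_{n-1})\x\R^{2n-4}$ (here $n\ges 3$), so its Gromov width is bounded independently of $R$, and a Liouville -- in particular symplectic -- embedding of the round disk $D^{2n-2}(R_n)$ forces $R_n\les R_{n-1}$; taking $R$ large never helps. The fix is to request the $(2n-4)$-disk of radius $\ges R_n$ inside $Y$ directly from the $h$-principle, but then the ``product and fatten'' mechanism is doing no work and your whole proof collapses onto the unproven normal-form claim.

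That claim is where all the content lies, and it is what the paper actually proves, by a different and more hands-on route than ``unwinding the BEM constants'' or a deformation/uniqueness argument (the latter would be dangerously circular inside this induction). The paper writes down an explicit $(2n-1)$-dimensional local model $(M,\alpha_M=dz+u\,d\p+v\,dt)$ foliated by symplectic leaves of type $T^*S^1$, $\C^*$ and $\C$ over regions of the $(z,u,\p)$-base, places $M$ in any overtwisted $(Y,\ker\alpha)$ by the $h$-principle (Lutz--Martinet plus Eliashberg's uniqueness and Gray stability when $n=2$), and then builds the $2n$-dimensional overtwisted disk $\Sigma_1\cup\Sigma_2$ inside $M\x D^2(R)$ via an explicit contact embedding of $\Delta_\cyl$ (Lemmas \ref{lem: 3D contact} and \ref{lem: highD emb}: a shearing contactomorphism followed by a contact expansion flow) sending $\Delta_\e$ into the region with $T^*S^1$ fibers and $B$ into the region with $\C$ fibers, the graph being taken over the rescaled Hamiltonian $\wt K$. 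Finally, your fallback of constructing a small plastikstufe with spherical core in $Y\x D^2(R)$ would indeed be circular here ($3b\Rightarrow 1$ is proved using this very theorem), and a naive product construction also fails outright because $\C^{n-2}\x D^2(R)$ contains no exact Lagrangian sphere to serve as the core.
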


Theorem \ref{thm:OTxD2} and its proof require some preliminaries, including the definition of the higher--dimensional overtwisted disk \cite[Definition 3.6]{BEM}. This definition is reviewed in Subsection \ref{ssec:otdisks}, and we provide the necessary details in this article such that the reader does not need to read \cite{BEM}.

Theorem \ref{thm:OTxD2} is proven in Subsection \ref{ssec:3to5} for the case $n=2$, where it is proven that a sufficiently large neighborhood of an overtwisted contact 3--fold is an overtwisted contact 5--fold. Then we proceed with the general case of Theorem \ref{thm:OTxD2} in Subsection \ref{ssec: highD domain}; this distinction between the 5--dimensional case and higher--dimensions is not essential and we could have written a unified proof for any $n\geq2$. However, encouraged by the suggestions of readers and referees it seems that this distinction contributes to a better understanding of the result.

\begin{remark}
The radius $R\in\R^+$ that appears in the statement depends on the choice of contact form $\alpha_{ot}$ for the contact manifold $(N,\ker\alpha_\ot)$. This dependence is to be expected since there is no natural distance measurement associated to a hyperplane distribution and the usual normalization is to fix a contact form.\hfill$\Box$
\end{remark}

Let us now start by describing the contact germ that defines an overtwisted disk in higher--dimensions, which lies at the core of Theorem \ref{thm:OTxD2}.
% --------------------
\subsection{Overtwisted Disks}\label{ssec:otdisks}
% --------------------
In order to define an overtwisted disk in an arbitrary dimension \cite[Section 3]{BEM} we first consider cylindrical coordinates
$$(z,u_1,\ldots,u_{n-2},\p_1,\ldots,\p_{n-2})\in\R^{2n-3}=\R\x(\R^2)^{n-2}$$
with each pair $(\sqrt{u_i},\p_i)\in\R^2$ being polar coordinates, and note that the standard contact structure $(\R^{2n-3},\xi_\std)$ is given by the kernel of the 1--form
$$\alpha_\std = dz + \sum_{i=1}^{n-2}u_id\p_i = dz + ud\p,\mbox{ where } u :=\displaystyle\sum_{i=1}^{n-2}u_i,\mbox{ and }ud\p:=\sum_{i=1}^{n-2}u_id\p_i.$$
The aim is to define a germ of a contact structure along a $(2n-2)$--dimensional disk, the overtwisted disk, in a $(2n-1)$--dimensional contact manifold. For that, we let $\varepsilon\in\R^+$ be given, consider the contact subdomains of $(\R^{2n-3},\xi_\std)$ given by
$$\Delta_\cyl = \{z \in [-1, 1-\e], \, \, u \in [0,1]\},\quad \Delta_\e = \{z \in [-1+\e, 1-\e], \,\, u \in [0, 1-\e]\}\sse\Delta_\cyl,$$
and define the subset $B = \{z=-1, \, \, u \in [0,1]\} \cup \{z \in [-1, 1-\e], \, \, u = 1\} \sse \dd\Delta_\cyl$ of the boundary of $\Delta_\cyl$. These three contact domains $\Delta_\cyl,\Delta_\e$ and $B$ are shown in Figure \ref{fig:FigureOT1}.
\begin{figure}[h!]
\includegraphics[scale=0.5]{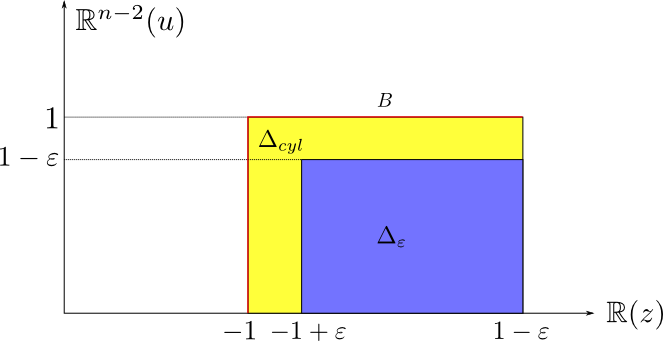} \\
\caption{{\small The domains $\Delta_{cyl}$ in yellow, $\Delta_\varepsilon$ in blue and $B$ in red. The domains are rotationally symmetric along the $z$--axis and we implicitly consider the coordinates of the angle $\p$ as included in the graphic representations of these domains.}} \label{fig:FigureOT1}
\end{figure}

In a nutshell, the contact germ will be defined as the restriction of an ambient contact structure in a neighborhood of a hypersurface describe as the graph of a particular function in the domain $\Delta_\cyl$. Let $k_\e: \R \longrightarrow \R$ be the piecewise linear function defined by
\begin{align*}
	k_\e(x) &:= 
	\begin{cases}
	0 & x \leqslant 1-\e\\
	x-(1-\e) & x \geqslant 1-\e.
	\end{cases}
\end{align*}
and fix a piecewise smooth function $K_\e: \Delta_\cyl \longrightarrow \R$ of the form

\begin{align*}
	K_\e(z,u_1, \p_1,\ldots,u_{n-2},\p_{n-2}) &:= 
	\begin{cases}
	 k_\e(|z|) + k_\e(u) & (z,u_1, \p_1,\ldots,u_{n-2},\p_{n-2}) \in \Delta_\cyl \sm \Int(\Delta_\e)\\
	<0& (z,u_1, \p_1,\ldots,u_{n-2},\p_{n-2}) \in \Int(\Delta_\e).
	\end{cases}
\end{align*}

Let us denote $q = (z,u_1, \p_1,\ldots,u_{n-2},\p_{n-2})$, then the function $K_\varepsilon$ defines the following two embeddings of two ($2n-2$)--dimensional hypersurfaces:
$$\Sigma_1 = \{(q, v, t) \in \Delta_\cyl \x T^*S^1 \, : \, t \in S^1, \, \, v =K_\e(q)\} \sse (\Delta_\cyl \x T^*S^1, \ker(\alpha_\st + vdt))$$
$$\Sigma_2 = \{(q, v, t) \in \Delta_\cyl \x \C \, : \, q \in B, \, \, t \in S^1, \, \, v \in [0, K_\e(q)]\} \sse (\Delta_\cyl \x \C, \ker(\alpha_\st + vdt)).$$

In the description of $\Sigma_1$ the pair of coordinates $(v,t)$ represents linear coordinates in $T^*S^1$ whereas in the definition of $\Sigma_2$ the coordinates $(\sqrt v, t)$ represent polar coordinates on the complex plane $\C$.

\begin{remark}\label{rmk:vt1}
The homonymous notation \cite[Section 2]{BEM} for these two distinct pairs of coordinates is genuinely useful once interiorized, and as the notation suggests we then implicitly identify the open subset $\{v>0\} \sse T^*S^1$ with the open subset $\C^* = \{\sqrt{v} > 0\} \sse \C$.\hfill$\Box$
\end{remark}

Notice that the function $K_\e:\Delta_\cyl\longrightarrow\R$ satisfies $K_\e|_B > 0$ on the subset $B\subseteq\dd \Delta_\cyl$ and thus the hypersurface $\Sigma_2$ is well--defined as a subset of $\Delta_\cyl\times\C$. Each of the two hypersurfaces $\Sigma_1$ and $\Sigma_2$ defines a germ of a contact structure $(\Op\Sigma_1,\eta_1)$ and $(\Op\Sigma_2,\eta_2)$ inherited from its respective ambient contact domains $(\Delta_\cyl \x T^*S^1, \ker(\alpha_\st + vdt))$ and $(\Delta_\cyl \x \C, \ker(\alpha_\st + vdt))$.

By using the contact identification of the two respective subsets
$$\{(v,t)\in T^*S^1:v>0\}\sse(T^*S^1,\la_\std),\quad\{(\sqrt{v},t)\in \C:v>0\}\sse(\C,\la_\std)$$
in the two ambient contact domains $(\Delta_\cyl\times T^*S^1,\ker(\a_\std+\la_\std))$ and $(\Delta_\cyl\times\C,\ker(\a_\std+\la_\std))$, the union $\Sigma_1 \cup \Sigma_2$ of the two hypersurfaces is a piecewise smooth disk in a contact domain and thus, by restriction of the contact structure, we obtain a contact germ in (a neighborhood of) this disk. Let us denote the disk endowed with this germ of a contact structure by $(D_{K_\e}, \eta_{K_\e})$.

\begin{remark}
Note that the dependence of the contact germ on the constant $\e\in\R^+$ is geometrically meaningful. Intuitively it describes the amount of rotation that the contact structure is allowed to have in the boundary $\dd\Delta_\cyl$, and this quantity features crucially in the argument for the existence $h$--principle \cite{BEM,El}.\hfill$\Box$
\end{remark}

Let us move to the definition of the overtwisted disk. In the article \cite[Definition 3.6]{BEM}, an overtwisted disk $(D_{K_\univ}, \eta_{K_\univ})$ is defined to be a certain contact germ $\eta_{K_\univ}$ along a piecewise smooth $(2n-2)$--disk $D_{K_\univ}$, where the definition of the function $K_\univ$ is neither constructive nor canonical. However in this article we can take the function to be
$$K_\univ = K_\e:\Delta_\cyl\longrightarrow\R$$
for any sufficiently small $\e < \e_\univ$, where $\e_\univ$ is a fixed constant depending only on dimension. We then have the following definition:

\begin{definition}
An overtwisted disk $(D^\ot_\e, \eta^\ot_\e)$ is any contact germ along a disk of the form $(D_{K_\e}, \eta_{K_\e})$ where the constant $\e\in\R^+$ satisfies $\e < \e_\univ$.\hfill$\Box$
\end{definition}

In practice, this implies that finding an overtwisted disk is tantamount to finding a neighborhood of a disk with the contact germ $(D_{K_\e}, \eta_{K_\e})$ for an arbitrarily small $\e\in\R^+$. Note also that the contact structure is defined as a contact germ on the disk, and thus the smooth regularity of the disk, as a hypersurface, is not a concern from the smooth topology perspective: the contact structure is defined in a smooth open neighborhood of a disk, which is still a smooth neighborhood even if the disk we consider is piecewise smooth \cite{BEM}.

\begin{definition}\label{def:ot}
A contact manifold $(Y, \xi)$ is \emph{overtwisted} if there exists a piecewise smooth embedding $D^{2n-2} \sse Y$ such that the contact germ $(D^{2n-2}, \xi|_D^{2n-2})$ is an overtwisted disk.\hfill$\Box$
\end{definition}

The reader should now be equipped to understand the statement of Theorem \ref{thm:OTxD2} and thus the statement of the equivalence (1)$\Longleftrightarrow$(2) in Theorem \ref{thm:main}.

Let us proceed with the proof of Theorem \ref{thm:OTxD2} in the case that $(N,\xi_\ot)$ is a 3--dimensional overtwisted contact manifold, which corresponds to the characterization (1)$\Longleftrightarrow$(2) in the case where $(Y,\xi)$ is a 5--dimensional contact manifold in the statement of Theorem \ref{thm:main}.

% --------------------
\subsection{The 5-dimensional case}\label{ssec:3to5}
% --------------------
The initial step in order to prove Theorem \ref{thm:OTxD2} for the $n=2$ case is to substitute the general overtwisted contact 3--fold $(N,\xi_\ot)$ by an explicit overtwisted local model $(M,\ker\a_M)$ and prove Theorem \ref{thm:OTxD2} for this particular 3--dimensional overtwisted contact domain $(M,\ker\a_M)$.

The description and motivation of this contact domain $(M,\ker\a_M)$ strongly use the bivalent coordinates $(v,t)$ that appear in Subsection \ref{ssec:otdisks}, which allows us to neatly describe the transition from $(T^*S^1,\la_\std)$ to the complex plane $(\C,\la_\std)$.

In this 5-dimensional case, the overtwisted disk $(D^2_\ot,\xi_\ot)$ is equivalent to the 2-dimensional overtwisted disk introduced by Y.~Eliashberg \cite[Section 1.4]{El} with the singular characteristic foliation as depicted in \cite[Section 4.5]{Ge}. The model for $(D^2_\ot,\xi_\ot)$ that we have in mind in the present article is \cite[Figure 4.9]{Ge}, i.e. an embedded 2-disk $(D^2_\ot,\xi_\ot)$ whose characteristic foliation contains a unique singular point in the interior and the characteristic foliation of $(D^2_\ot,\xi_\ot)$ is singular along $\partial D^2_\ot$. In particular, $\partial D^2_\ot$ is a Legendrian curve with vanishing Thurston-Bennequin invariant.

\begin{remark}\label{rmk:vt2}
In Definition \ref{def:ot} for the overtwisted disks we expressed the contact germ in a particular disk $(D^\ot_\e, \eta_\e^\ot) = \Sigma_1 \cup \Sigma_2$, which is described as the union of two pieces $\Sigma_1$ and $\Sigma_2$. These two pieces are both defined in terms of a function $K_\e:\Delta_\cyl\longrightarrow\R$ as explained above: the first piece $\Sigma_1$ is precisely the graph of the function $\{v = K_\e\} \sse \Delta_\cyl \x T^*S^1$, whereas the second piece $\Sigma_2$ is instead the sublevel set $\{ v \les K_\e|_B\} \sse \Delta_\cyl \x \C$.

For the first piece $\Sigma_1$, it is essential that the coordinates $(v,t)$ belong to the cotangent space $(v,t) \in T^*S^1$, and not the complex plane, since the function $K_\e:\Delta_\cyl\longrightarrow\R$ attains both positive and negative values. In contrast, for the second piece $\Sigma_2$ it is essential that the coordinates $(\sqrt v, t)$ actually define polar coordinates on the complex plane $(\C,\la_\std)$, since we can then define the sublevel set $\{v \les K_\e\}$ correctly. This hopefully emphasizes the importance of the varying domains that the coordinates $(v,t)$ are defining.\hfill$\Box$
\end{remark}

Let us now describe a local model $(M, \ker\alpha_M)$ which is contained in any overtwisted contact $3$-fold $(N,\xi_\ot)$ and has a crucial role in the proof of Theorem \ref{thm:OTxD2}. The domain $M$ is diffeomorphic to a compact $3$-ball with a piecewise smooth boundary and it admits coordinates $(z,v,t)$, where $(v,t)$ are coordinates in the sense of Remarks \ref{rmk:vt1} and \ref{rmk:vt2} above. In these coordinates $(z,v,t)$ the contact form $\a_M$ reads
$$\alpha_M = dz + vdt.$$

It is our duty to be precise with the meaning of the pair $(v,t)$: in this case the coordinate $z \in (-3-\e, 1]$ dictates the domain of definition of the pair of coordinates $(v,t)$. This goes as follows, the symplectic submanifolds $\{z = \const\}$ belong to one of these three types:

\begin{figure}[h!]
\includegraphics[scale=0.5]{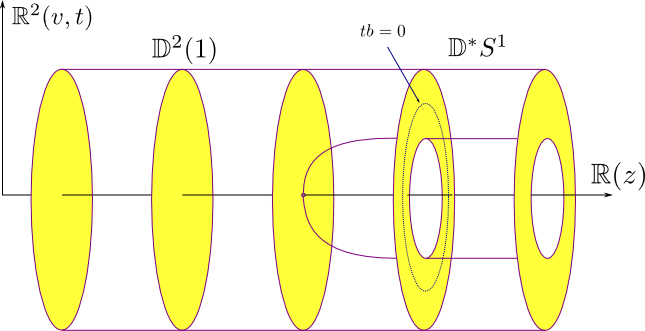}
\caption{{\small The overtwisted contact ball $(M,\ker\alpha_M)$}.}\label{fig:FigureOT0}
\end{figure}

\begin{itemize}
\item[a.] For $z \in [-1 + \frac{2\e}3, 1]$, we have $(v,t)\in[-1,1]\times S^1$. Thus in this range the submanifolds $\{z = \const\}$ are exact symplectomorphic to the unit disk bundle $(D^*S^1,\la_\std)$ inside $(T^*S^1,\la_\std)$ since the restriction of $\alpha$ equals the canonical Liouville form.\\

\item[b.] For $z \in (-1 + \frac\e3, -1 + \frac{2\e}3)$, we let $t \in S^1$ and $v \in [\e, 1]$. Then the fibers are exact symplectomorphic to $\{\e \leq v \leq 1\} \sse (T^*S^1,\la_\std)$. Notice that these fibers are also equal to the standard Liouville structure on $D^2(1) \sm D^2(\e^2) \sse \C$, where we equip $(\C,\la_\std)$ with polar coordinates $(\sqrt v, t)$.\\

\item[c.] For $z \in [-3 - \e, -1 + \frac\e3)$, we define the fibers $\{z = \const\}$ to be equal to the unit disk $(D^2(1),\la_\std)\sse(\C,\la_\std)$, with $(\sqrt v, t)$ continuing to represent polar coordinates.\\
\end{itemize}

The choice of the dependence of the domain of $(v,t)$ on the $z$-coordinate allows for a more flexible notation, which hopefully helps the reader. It is possible to alternatively define $(v,t)$ to be global coordinates independent of $z$ and work with a contact form
$$\alpha_M = dz + \rho(v)dt,$$
and domain $M$ depending on the choice of a Hamiltonian $\rho:\R\longrightarrow\R$. This is the notation that is followed in \cite[Section 2]{BEM}. In our notation, the dependence of $(v,t)$ absorbs keeping track of such Hamiltonian $\rho$, which we prefer.

\begin{remark}
It might help the reader to understand the contact domain $(M,\a_M)$ as a symplectic foliation where the leaves are parametrized by the interval in the $z$--coordinate. This fibration viewpoint has been fruitful in contact topology \cite{CaPhD,CPP} and it provided us with the right insight to prove Theorem \ref{thm:OTxD2}.\hfill$\Box$
\end{remark}

The contact domain $(M,\ker\alpha_M)$ is depicted in Figure \ref{fig:FigureOT0}, where the reader can see how the dependence of the domain of the coordinates $(v,t)$ varies according to the value of the $z$--coordinate. Let us now analyze the two fundamental contact properties of $(M,\ker\a_M)$:

\begin{itemize}
\item[1.] First, the 3--dimensional contact domain $(M,\ker\alpha_M)$ is overtwisted.\\
 
\noindent This can be proven by direct inspection and finding an overtwisted 2--disk. Instead, we can note that the Legendrian circle $\{(z,v,t)\in M: 6z = -6 + 5\e, v = 0\}\sse (M,\a_M)$ is an unknotted Legendrian with zero Thurston-Bennequin number, which proves that the contact model $(M,\a_M)$ is overtwisted \cite{El}.\\
 
\item[2.] Second, the contact domain $(M,\ker\alpha_M)$ serves as a local model in any overtwisted $3$-manifold. Indeed, if $(Y, \ker\alpha)$ is any overtwisted contact $3$-manifold, possibly open, then $(M,\ker\a_M)$ admits a contact embedding
 $$f:(M,\ker\a_M)\longrightarrow(Y,\ker\a)$$
 due to Eliashberg's classification theorem \cite{El}. Even better, defining the positive smooth function $c_f:M\longrightarrow\R$ given by the conformal factor $f^*\alpha = c_f\alpha_M$, we conclude by compactness that there exists a constant $R\in\R^+$ such that $c_f < R$. In consequence, the contact product $(M \x D^2(1),\ker(\a_M+\la_\std))$ embeds into the contact product $(Y \x D^2(R),\ker(\a+\la_\std))$. 
\end{itemize}

It follows from the second property and Definition \ref{def:ot} that in order to prove Theorem \ref{thm:OTxD2} in this 5--dimensional case it suffices to find an overtwisted disk in the contact product manifold
$$(M \x D^2(1),\ker(\a_M+\la_\std)).$$
This is our goal now, which we achieve by first proving the technical Lemma \ref{lem: 3D contact}.

Let us define the 3--dimensional contact domain $\wt \Delta = (-3 - \e, 1) \x D^2(1)$ endowed with coordinates $(z,u_0,\p_0)$ and the standard contact form $dz + u_0 d\p_0$, where $z \in (-3 - \e, 1)$ and $(\sqrt u_0, \p_0)$ are polar coordinates on $D^2(1)$. Consider the map
$$\pi:(M\times D^2(1),\a_M+u_0 d\p_0)\longrightarrow (\wt\Delta,dz+u_0 d\p_0),$$
$$(z,v,t,u_0,\p_0)\longmapsto \pi(z,v,t,u_0,\p_0)=(z,u_0,\p_0),$$
whose fibers are Liouville surfaces symplectomorphic to subdomains of $(T^*S^1,\la_\std)$ or $(\C,\la_\std)$.

The contact domain $\wt\Delta$ contains two different subdomains in terms of the fibers of the smooth map $\pi$, which we can define as
$$\wt \Delta_1 = \{(z,u_0,\p_0)\in \wt\Delta:z \in (-1 + 2\e/3, 1-\e]\} \sse \wt \Delta$$
$$\wt \Delta_2 = \{(z,u_0,\p_0)\in \wt\Delta:z \in [-3, -1 + \e/3)\} \sse \wt\Delta.$$
Therefore the first subdomain $\wt \Delta_1 \sse \wt \Delta$ corresponds to those values of $(z, u_0, \p_0)$ such that the fiber is given by $(v,t) \in (T^*S^1,\la_\std)$ for $(z, v, t, u_0, \p_0) \in M \x D^2(1)$. Similarly, the second subdomain $\wt\Delta_2\sse\wt\Delta$ corresponds to those values in $\wt\Delta$ where the fiber of the projection $\pi$ is equivalent to $(\C,\la_\std)$.

\begin{remark}
Following Definition \ref{def:ot} and the discussion above, the proof of Theorem \ref{thm:OTxD2} in this 5--dimensional case consists in finding a 4--dimensional overtwisted disk in the contact model $(M\times D^2(1),\ker(\a_M+\la_\std))$. The contact germ of an overtwisted disk is given in terms of a domain of definition $\Delta_\cyl$, and observe that there is a natural embedding $\Delta_\cyl\longrightarrow\wt\Delta$.

However, and that is the difficulty that needs to be solved at this point, it is not true that we have the inclusions $\Delta_\e\sse\wt\Delta_1$ and $B\sse\wt\Delta_2$. Note that if this were the case the contact model above would readily be overtwisted.\hfill$\Box$
\end{remark}

The exact relation between the contact domain $\wt\Delta$ and $\Delta_\cyl$ needed in order to prove Theorem \ref{thm:OTxD2} in this $n=2$ case is established in the following lemma.

\begin{lemma} \label{lem: 3D contact}
There exists a strict contact embedding $f:(\Delta_\cyl,\xi_\std) \longrightarrow (\wt \Delta,\xi_\std)$, i.e.~ such that $f^*\alpha_\std = \alpha_\std$, with the property that $f(\Delta_\e) \sse \wt \Delta_1$ and $f(B) \sse \wt\Delta_2$.
\end{lemma}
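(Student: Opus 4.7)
The plan is to realize $f$ explicitly as a contact shear in the coordinates $(z,u,\p)$ that fixes the interior $\Delta_\e$ and pushes the outer shell of $\Delta_\cyl$ far down in the $z$-direction. Concretely, I look for a map of the form
$$f(z,u,\p) \;=\; (z - G(u),\, u,\, \p + h(u))$$
for smooth functions $G, h\colon [0,1] \to \R$ supported near $u=1$, and derive the compatibility condition on $(G,h)$ that makes $f$ a strict contactomorphism.

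A direct pullback computation gives
$$f^*(dz + u\,d\p) \;=\; dz + u\,d\p + \bigl(u h'(u) - G'(u)\bigr)\,du,$$
so $f^*\alpha_\std = \alpha_\std$ precisely when $h'(u) = G'(u)/u$. I then choose a smooth nondecreasing $G\colon [0,1] \to [0,2]$ with $G \equiv 0$ on $[0, 1-\e]$ and $G(1) = 2$. Since $G'$ is supported in $[1-\e, 1]$, where $u$ is bounded away from zero, the function $h(u) := \int_0^u G'(s)/s\, ds$ is smooth on $[0,1]$ and vanishes on $[0, 1-\e]$, so the compatibility relation holds by construction.

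It remains to verify the image conditions, which is bookkeeping in $z$. On $\Delta_\e$ we have $u \les 1-\e$, so $G$ and $h$ vanish and $f$ is the identity; the image $\Delta_\e$ has $z \in [-1+\e, 1-\e]$, which sits in $\wt\Delta_-$ because $-1+\e > -1 + \tfrac{2\e}{3}$. On the bottom $\{z=-1\} \sse B$ the image has $z$-coordinate $-1 - G(u) \in [-3, -1]$, and on the outer shell $\{u=1\} \sse B$ with $z \in [-1, 1-\e]$ the image has $z$-coordinate $z - 2 \in [-3, -1-\e]$; both ranges lie in $[-3, -1 + \tfrac{\e}{3})$, so $f(B) \sse \wt\Delta_+$. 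Since the Jacobian of $f$ is upper-triangular with ones on the diagonal, $f$ is a diffeomorphism onto its image, and the image clearly sits in $\wt\Delta$. The only genuine input is spotting the contact-shear ansatz and observing that the equation $h' = G'/u$ is integrable whenever $G'$ is supported away from $u=0$; everything else follows by direct inspection.
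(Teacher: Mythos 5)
Your proof is correct and takes essentially the same route as the paper: the paper's map is precisely your shear ansatz, with $G(u)=\frac{4}{\e}\max\left(u-1+\frac{\e}{2},0\right)$ and $h(u)=\frac{4}{\e}\log u$ for $u\ge 1-\frac{\e}{2}$ (a constant rotation below), i.e.\ the same compatibility relation $h'=G'/u$ and the same bookkeeping in the $z$-coordinate. Your only deviation is choosing a smooth $G$ supported in $[1-\e,1]$, which merely smooths the paper's piecewise-linear corner at $u=1-\frac{\e}{2}$ and changes nothing of substance.
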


Lemma \ref{lem: 3D contact} will be proven momentarily, but let us first conclude Theorem \ref{thm:OTxD2} for an overtwisted contact 3--fold $(N,\ker\a_\ot)$ assuming such contact embedding exists.

\begin{proof}[Proof of Theorem \ref{thm:OTxD2} for $\dim(N)=3$] It suffices to show that the 5--dimensional domain
$$(M\x D^2(1),\ker(\a_M+u_0d\p_0))$$
is overtwisted, as we have discussed above. In order to do that, let $\varepsilon\in\R^+$ be a fixed but small enough constant such that $\e<\e_\univ$ and consider the contact embedding $f:\Delta_\cyl\longrightarrow \wt\Delta$ provided in Lemma \ref{lem: 3D contact}. The claim is now that the preimage
$$\pi^{-1}(\Op(f(\Delta_\cyl))\sse(M\x D^2(1),\ker(\a_M+u_0d\p_0))$$
contains an overtwisted disk. Indeed, define the function $\wt K_\e: f(\Delta_\cyl) \longrightarrow \R$ as the pull--back
$$\wt K_\e = K_\e \circ f^{-1}:f(\Delta_\cyl)\longrightarrow\R.$$
and consider the two hypersurfaces
$$\wt \Sigma_1 = \{(z, v, t, u_0, \p_0) \, : \, (z, u_0, \p_0) \in f(\Delta_\cyl), \, \, t \in S^1, \, \, v = \wt K_\e(z, u_0, \p_0)\}\sse f(\Delta_\cyl)\times T^*S^1,$$
$$\wt \Sigma_2 = \{(z, v, t, u_0, \p_0) \, : \, (z, u_0, \p_0) \in f(B), \, \, t \in S^1, \, \, v \in [0, \wt K_\e(z, u_0, \p_0)]\}\sse f(B)\times \C.$$

Notice that the first hypersurface $\wt \Sigma_1$ is a well--defined subset of $M \x D^2(1)$ precisely because $f(\Delta_\e) \sse \wt \Delta_1$, and similarly the second hypersurface $\wt \Sigma_2$ is well--defined because the inclusion $f(B) \sse \wt \Delta_2$ is satisfied. This construction now exhibits an overtwisted disk in our 5--dimensional domain $(M\x D^2(1),\ker(\a_M+u_0d\p_0))$: the contact germ of the 4--disk $D^4 = \wt \Sigma_1 \cup \wt \Sigma_2$ obtained as the union of the two hypersurfaces is an overtwisted disk. Indeed, since the 3--dimensional contactomorphism $f:\Delta_\cyl\longrightarrow\wt\Delta$ preserves the contact form, the extended contactomorphism in 5--dimensions
$$F:(\Op(D^\ot_\e),\eta^\ot_\e)\cong (\Op(\Sigma_1\cup\Sigma_2),\eta^\ot_\e)\longrightarrow (M\times D^2(1),\a_M+u_0d\p_0)$$
$$(z, u_0, \p_0, v, t)\longmapsto F(z, u_0, \p_0, v, t) = (f(z,u_0,\p_0), v, t).$$
maps the contact germ $(D^\ot_\e,\eta^\ot_\e)$ to the contact germ $(D^4,\ker(\alpha_M+\lambda_\std))$, as required.
\end{proof}

This concludes the proof of Theorem \ref{thm:OTxD2} in the case that $\dim(N)=3$ modulo the construction of the contactomorphism in Lemma \ref{lem: 3D contact}, which we now prove.

\begin{figure}[h!]
\includegraphics[scale=0.4]{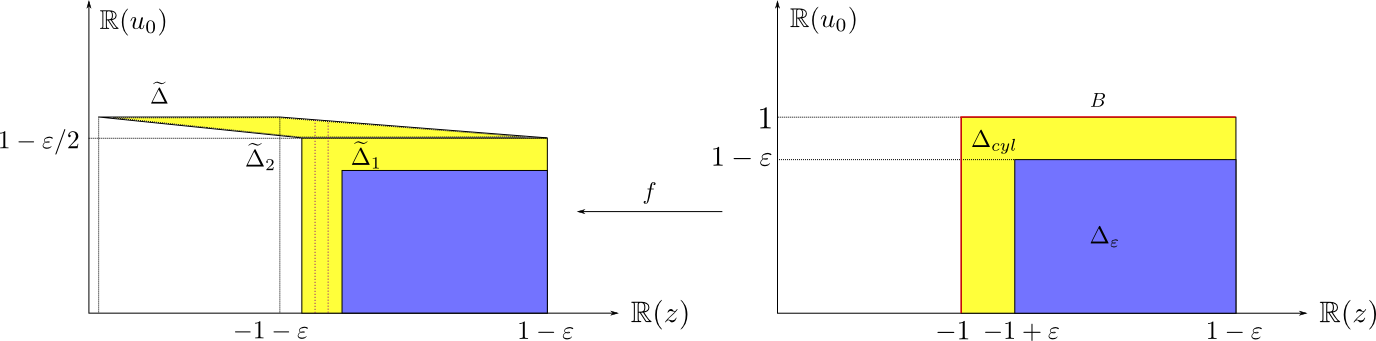}
\caption{{\small The contact domains appearing in Lemma \ref{lem: 3D contact}}.}
\label{fig:FigureOT2}
\end{figure}

\begin{proof}[Proof of Lemma \ref{lem: 3D contact}:]
Let $g:[0,1]\longrightarrow[0,2]$ be a smooth and increasing function which is $C^0$--close to the piecewise linear function defined by $$u \longmapsto \begin{cases} 0 & \text{ if } u \in [0, 1- \frac\e 2] \\ \frac4\e(u-1+\frac\e2) & \text{ if } u \in [1 - \frac\e2, 1]\end{cases},$$
and consider the diffeomorphism $f:\Delta_\cyl\longrightarrow\wt\Delta$ defined by
$$ f(z, u_0, \p_0) = \left(z - g(u_0), u_0, \p_0 - \int_{u_0}^1 \frac{g'(u_0)}{u_0} du_0 \right).$$
The map $f:\Delta_\cyl\longrightarrow\wt\Delta$, which is depicted in Figure \ref{fig:FigureOT2}, has the desired properties from the statement. Indeed, the diffeomorphism $f$ is a $C^0$--approximation of a piecewise smooth contactomorphism which acts by taking the region $\{u \geq 1-\frac\e 2\}$ and shearing its $z$--coordinate far to the left, thus conforming to the required properties.
\end{proof}

% --------------------
\subsection{General dimensions}\label{ssec: highD domain}
% --------------------
In this section we prove Theorem \ref{thm:OTxD2}. The reader is strongly encouraged to have understood the case $n=2$, proven in the previous Subsection \ref{ssec:3to5}. The argument we use in order to conclude Theorem \ref{thm:OTxD2} for an arbitrary overtwisted contact manifold $(N,\xi_\ot)$ contains the same steps as the 5--dimensional case above, but the general higher--dimensional versions of the boundary piece $B\sse\Delta_\cyl$ and Lemma \ref{lem: 3D contact} contain more information.

The first difference between the general and 5--dimensional cases is that the contact embedding $f:\Delta_\cyl\longrightarrow\wt\Delta$ that we use in the general case, generalizing Lemma \ref{lem: 3D contact}, is no longer strict, and thus a conformal factor must be accounted for when constructing the domains to which we push--forward the function $K_\e:\Delta_\cyl\longrightarrow\R$. This conformal factor is the reason for the appearance of the constant $\rho\in\R^+$ in the following definition of the local model $(M,\a_M)$.

Consider two positive reals constants $\e,\rho\in\R^+$, where $\e$ is to be small and $\rho$ quite large. Define the $(2n-3)$--dimensional domain
$$I=(-\rho,1)\times D^{2n-4}(\rho)$$
with coordinates $z\in(-\rho,1)$ and $(u,\p)=(u_1,\p_1,\ldots,u_{n-2},\p_{n-2})$ are polar coordinates on the ball $D^{2n-4}(\rho)$. The domain $I$ generalizes the $z$--coordinate interval in the proof of the 5--dimensional case. Following the first step of the proof in Subsection \ref{ssec:3to5}, we describe a $(2n-1)$--dimensional contact model domain $(M, \alpha_M)$, which is endowed with coordinates $(z, u, \p, v, t)$ and the contact form given globally by
$$\alpha_M = dz + u d\p+ v dt.$$
Hopefully, the reader noticed that the domain of the coordinates $(v,t)$ must at least depend on the $z$--coordinate, as in Subsection \ref{ssec:3to5}. Indeed, the variables $(z,u,\p)\in I$ belong to the fixed domain $I$ but the domain of the variables $(v,t)$ will either be the unit disk bundle $D^*S^1$, the positive part $D^2(1) \sm D^2(\e)$ or the unit disk $D^2(1)$ depending on the coordinates $(z,u,\p) \in I$. This precise dependence is given as follows:

\begin{itemize}
\item[a.] For $(z,|u|)\in I_1:=[-1+\frac{2\varepsilon}{3},1]\x[0,1-\frac{2\e}{3}]\sse I$, we have $(v,t)\in[-1, 1]\times S^1$. In this range, the symplectic submanifolds $\{(z,u,\p)=\mbox{constant}\}$ are exact symplectomorphic to $(D^*S^1,\la_\std)$.\\

\item[b.] For $(z,|u|)\in I_{1/2}:=(-1+\frac{\varepsilon}{3},-1+\frac{2\varepsilon}{3})\x[0,1-\frac{\e}{3})\cup(-1+\frac{\varepsilon}{3},1)\x(1-\frac{2\varepsilon}{3},1-\frac{\e}{3})\sse I$, we consider $(v,t)\in[\delta,1]\times S^1$. The symplectic submanifolds $\{(z,u,\p)=\mbox{constant}\}$ are symplectomorphic to $(D^2(1) \sm D^2(\delta),\la_\std)$, where $\delta\in\R^+$ is a small constant which will be chosen in the proof of Theorem \ref{thm:OTxD2}. The constant $\delta$ does not have a crucial role, and thus we do not include it in the notation.\\

\item[c.] For $\{(z,|u|)\in I_2:=[-\rho,-1+\frac{\varepsilon}{3}]\x[0,\rho]\cup[-\rho,1]\times[1-\frac{\e}{3},\rho]\}\sse I$, we declare $(v, t) \in (D^2(1),\la_\std)$ to be polar coordinates $(\sqrt v, t)$ in the unit disk.\\
\end{itemize}

This $(2n-1)$--dimensional contact local model $(M,\ker\alpha_M)$ has the two properties of its $3$--dimensional analogue in Subsection \ref{ssec:3to5}. First, the contact manifold $(M,\ker\alpha_M)$ is overtwisted if we choose $\rho\in\R^+$ large enough. Second, for any choice of positive constants $\rho,\delta$ and $\e\in\R^+$, this contact local model $(M,\ker\alpha_M)$ exists in every overtwisted $(2n-1)$--dimensional contact manifold $(N,\ker\alpha_\ot)$ by the isocontact embedding $h$--principle \cite[Corollary 1.4]{BEM}, and the fact that the scaling factor between $\alpha_\ot$ and $\alpha_M$ is bounded because $M$ is compact. Hence in order to conclude Theorem \ref{thm:OTxD2} it remains to prove that the $(2n+1)$--dimensional contact domain
$$(M\times D^2(R),\ker(\alpha_M+\lambda_\std))$$
contains an overtwisted $2n$--disk when $R\in\R^+$ is sufficiently large.

Let us introduce the domain $\wt\Delta$ and its relatives, following the steps in Subsection \ref{ssec:3to5}. We consider the contact domain
$$\wt \Delta = (-\rho, 1-\e) \x D^{2n-4}(\rho) \x D^2(R),\mbox{ with coordinates }(z,u,\p,u_0,\p_0),$$
which contains the two $(2n-1)$--dimensional subdomains
$$\wt\Delta_1 = I_1 \x D^2(R) \sse \wt \Delta,\quad \wt\Delta_2 = I_2 \x D^2(R) \sse \wt \Delta.$$
These two subdomains have the same role as their homonymous domains have in the 5--dimensional case discussed in Subsection \ref{ssec:3to5}. Indeed, the reason for considering the two subdomains $\wt\Delta_1$ and $\wt\Delta_2$ is that the symplectic type of the fibers of the projection map
$$\pi:M\times D^2(1)\longrightarrow\wt\Delta,\quad (z,u,\p,v,t,u_0,\p_0)\longmapsto\pi(z,u,\p,v,t,u_0,\p_0)=(z,u,\p,u_0,\p_0)$$
depends on the point of the domain $\wt\Delta$. Indeed, over the region $\wt\Delta_1$ the fiber of the map $\pi$ is the unit cotangent bundle $(D^*S^1,\la_\std)$, whereas over the region $\wt\Delta_2$ the fiber is the unit disk $(D^2(1),\la_\std)$.

In the same vein than Lemma \ref{lem: 3D contact}, we now need to compare the two $(2n-1)$--dimensional contact domains $(\wt \Delta,\xi_\std)$ and $(\Delta^{2n-1}_\cyl,\xi_\std)$, and contact embed the domain $\Delta_\cyl$ inside $\wt\Delta$ in such a manner that the images of the subdomain $\Delta_\e$ and the boundary piece $B$ lie in the appropriate regions of the target domain $\wt\Delta$. This is the content of the following lemma, which generalizes Lemma \ref{lem: 3D contact}.

\begin{figure}[h!]
\includegraphics[scale=0.5]{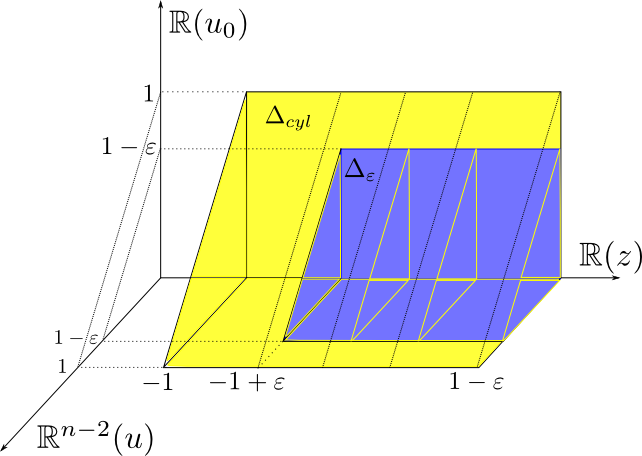} \\
\caption{{\small The contact domain $\Delta_\cyl=(z,u_0,\p_0,u,\p)$.}} \label{fig:FigureOT3}
\end{figure}

\begin{lemma}\label{lem: highD emb}
For any constant $\e\in\R^+$, there exist constants $\rho,R\in\R^+$ such that there is a contact embedding $f:(\Delta_\cyl,\xi_\std)\longrightarrow(\wt\Delta,\xi_\std)$, satisfying $f(\Delta_\e) \sse \wt \Delta_1$ and $f(B) \sse \wt \Delta_2$.
\end{lemma}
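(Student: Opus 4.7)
The plan is to mimic the explicit shearing contactomorphism constructed in Lemma~\ref{lem: 3D contact}, now taking advantage of the two extra dimensions that $\wt\Delta$ has over $\Delta_\cyl$. In the $3$-dimensional case the shear of $z$ by a function of $u$ was compensated by a $\log u$-correction in the $\p$-variable; here I propose instead to dump that correction into the auxiliary Liouville factor $(v,t)\in D^2(R)$, so that the map can be taken to be the identity on all of the $(u_i,\p_i)$ coordinates.

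Concretely, pick a smooth non-decreasing cut-off $G:[0,1]\longrightarrow[0,\infty)$ with $G\equiv 0$ on $[0,1-\e/2]$ and $G\ges 2$ near $u=1$, together with a constant $V_0>0$, and set
$$F(z,u_1,\ldots,u_{n-2},\p_1,\ldots,\p_{n-2})=\bigl(z-G(u),\,u_1,\ldots,u_{n-2},\,\p_1,\ldots,\p_{n-2},\,V_0,\,G(u)/V_0\bigr),$$
where $u=u_1+\ldots+u_{n-2}$. Because $V_0\,d(G(u)/V_0)=G'(u)\,du$ exactly cancels the $-G'(u)\,du$ coming from the $z$-shear, the pullback $F^{*}(dz+\sum u_i\,d\p_i+v\,dt)$ equals $dz+\sum u_i\,d\p_i$, so $F$ is a strict contact embedding. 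Injectivity is clear since $F$ is the identity on the $(u_i,\p_i)$ coordinates and $z$ is recovered from $Z=z-G(u)$ once $u$ is fixed.

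The two inclusions then follow from a short case analysis. On $\Delta_\e$ the bound $u\les 1-\e$ forces $G(u)=0$, so $F$ reduces to the inclusion $(z,u_i,\p_i)\longmapsto(z,u_i,\p_i,V_0,0)$; the inequalities $z\in[-1+\e,1-\e]\sse(-1+2\e/3,1)$ and $u\in[0,1-\e]\sse[0,1-2\e/3)$ place the image in region a. On $B$, either $z=-1$, giving $Z=-1-G(u)\les-1<-1+\e/3$, or $u=1$, giving $Z\les(1-\e)-2<-1+\e/3$; both cases land in the first piece of region c. Since the image is contained in $\{Z\in[-3,1-\e]\}\x\{|U|\les1\}\x\{(V,T)=(V_0,G(u)/V_0)\}$, taking $\rho>3$ is enough to place everything inside $\wt\Delta$.

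The main delicacy is the interplay between the size of the shear and the radius of the auxiliary disk: one needs $G\ges 2$ so that $\{z=-1\}$ actually lands in region c, and simultaneously $V_0>G_{\max}/(2\pi)$ so that $T=G(u)/V_0$ does not wrap around $S^1$ and the map remains injective. Both conditions are satisfied by enlarging $V_0$, after which $R\ges\sqrt{V_0}$ ensures that the $(V,T)$-factor of the image lies inside $D^2(R)$; this is precisely why the statement of the lemma requires $R$ (and hence $\rho$) to be chosen sufficiently large.
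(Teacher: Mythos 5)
Your map is a genuine strict contact embedding, but of the wrong domain, and this is not a cosmetic issue. You have taken $\Delta_\cyl$ to be the $(2n-3)$-dimensional model of Subsection \ref{ssec:otdisks}, with coordinates $(z,u_1,\ldots,u_{n-2},\p_1,\ldots,\p_{n-2})$, and treated the $D^2(R)$-factor of $\wt\Delta$ as two ``spare'' dimensions in which to absorb the compensating term $G'(u)\,du$. In Lemma \ref{lem: highD emb}, however, $\Delta_\cyl$ and $\wt\Delta$ are \emph{equi-dimensional} subdomains of $(\R^{2n-1},\xi_\std)$: the ambient manifold is $M\x D^2(R)$, of dimension $2n+1$, so the overtwisted disk to be produced in the proof of Theorem \ref{thm:OTxD2} is $2n$-dimensional, and the base over which $\wt\Sigma_1$ (graph of $\wt K$ in the $v$-direction) and $\wt\Sigma_2$ (sublevel sets over $B$) are built must be the $(2n-1)$-dimensional $\Delta_\cyl$ with coordinates $(z,u_0,\p_0,u,\p)$, radial constraint $u_0+|u|\les 1$, and $B=\{z=-1\}\cup\{u_0+|u|=1\}$. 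The $D^2(R)$-factor of $\wt\Delta$ is where the source coordinates $(u_0,\p_0)$ have to be sent; it is not free room. With your codimension-2 embedding, the graph-plus-sublevel construction over $F(\Delta_\cyl)$ only produces a $(2n-2)$-dimensional disk, i.e.\ the model overtwisted disk of a $(2n-1)$-dimensional contact manifold, which certifies nothing about overtwistedness of $(Y\x D^2(R),\ker(\alpha_\ot+\lambda_\std))$; so the lemma, in the form needed for Theorem \ref{thm:OTxD2}, is not proved.

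This misreading also lets you bypass exactly the new difficulty of the higher-dimensional case. In the correct $\Delta_\cyl$, the troublesome part of $B$ consists of points with $u_0+|u|=1$ where neither $u_0$ nor $|u|$ is close to $1$ (say $u_0=|u|=1/2$): a shear keyed to the disk factor alone does not move them left in $z$, and they do not lie in region c either. The paper's proof first applies $\wt f=(f,\op{id})$, with $f$ the $3$-dimensional shear of Lemma \ref{lem: 3D contact} acting on $(z,u_0,\p_0)$, checks that the leftover set $\wt f(B)\sm\wt\Delta_+$ has $|u|$ bounded below by a positive constant, and then composes with the time-$\tau$ flow of a cut-off version of the radial contact field $u\dd_u+u_0\dd_{u_0}+z\dd_z$, which expands $|u|$ exponentially and pushes that leftover set into $\{|u|>1-\e/3\}\sse\wt\Delta_+$ once $\tau$ is large; this expansion is why $\rho$ and $R$ must be taken large. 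Your construction never confronts this because your $B$ is only $\{z=-1\}\cup\{|u|=1\}$, and it cannot be repaired by crossing with the identity in $(u_0,\p_0)$, since you have already spent the last two target coordinates on the compensating point $(V_0,G(u)/V_0)$. Within your own smaller source the computation is correct, but it proves a different, weaker statement than the one required.
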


\begin{proof}
First, the two $(2n-1)$--dimensional domains $\Delta^{2n-1}_\cyl$ and $\wt \Delta$ are contact subdomains of the ambient contact space $(\R^{2n-1},\xi_\std)\cong(\R^{2n-3}\times D^2(R),\xi_\std)$ and we are using coordinates $(z, u, \p,u_0, \p_0)$, where $(\sqrt{u_0}, \p_0)$ are polar coordinates of the $D^2(R)$ factor.

In comparison to Lemma \ref{lem: 3D contact}, additional effort must be invested when working with the set $B \sse \Delta_\cyl$, which can be described by the union $B = \{z = -1\} \cup \{u_0 + |u| = 1\}$. The reader is encouraged to visualize the subset $B$ in Figure \ref{fig:FigureOT3}, where we have depicted the coordinates $(z,u,u_0)$.

In order to achieve the condition $f(B)\sse\wt\Delta_2$ we have the choice of either decreasing the $z$--coordinate below the value $-1+\e/3$ or increasing the $u$--coordinate beyond the value $1-\e/3$. In fact, we shall use both depending on the region of the set $B\sse\Delta_\cyl$ we find ourselves in. The decomposition of the set $B=B_1\cup B_2$ we consider is defined as follows
$$B_1:=\{(z,u,\p,u_0,\p_0)\in B:|u|\leq\e^2/4\},\quad B_2:=\{(z,u,\p,u_0,\p_0)\in B:|u|\geq\e^2/4\}\sse B.$$
The required contactomorphism $f:(\Delta^{2n-1}_\cyl,\xi_\std)\longrightarrow(\wt\Delta,\xi_\std)$ will be obtained as the composition of two contactomorphisms $g_1:\Delta_\cyl\longrightarrow\wt\Delta$ and $g_2:(\R^{2n-1},\xi_\st)\longrightarrow(\R^{2n-1},\xi_\st)$, both of which will restrict to the identity $g_1|_{\Delta_\e}=g_2|_{g_1(\Delta_\e)}=\mbox{id}|_{\Delta_\e}$ in the region $\Delta_\e=g_1(\Delta_\e)\sse\Delta_\cyl$.
In geometric terms, the contactomorphism $g_1$ will decrease the $z$--coordinate in the region $B_1$ in order to contact embed it into $\wt\Delta_2$, and the contactomorphism $g_2$ will increase the modulus $u$ and embed the region $B_2$ into $\wt\Delta_2$. Let us start with $g_1$, which already featured in the 5--dimensional case.

Consider the contactomorphism $h:(\Delta^3_\cyl,\xi_\std)\longrightarrow(\wt\Delta^3,\xi_\std)$ constructed in Lemma \ref{lem: 3D contact} and define the contactomorphism 
$$g_1:(\Delta^{2n-1}_\cyl,\xi_\std)\longrightarrow(\wt\Delta^{2n-1},\xi_\std),\quad g_1(z, u_0, \p_0, u, \p) = (h(z, u_0, \p_0), u, \p).$$
This contactomorphism satisfies $g_1(B_1)\sse\wt\Delta_2$ since a point $(z, u_0, \p_0, u, \p)\in B_1$ must have $u_0=1-|u|\geq 1-\e^2/4$ and thus the image point $(h(z, u_0, \p_0), u, \p)$ has a $z$--coordinate below the value $-1+\e/3$.

Let us now describe the contactomorphism $g_2$, where we will push the remaining piece $B_2\sse B$ into the region $\wt\Delta_2$. Consider the contact vector field $X=u \partial_u+u_0\partial_{u_0}+z\partial_z$ on $\Delta^{2n-1}_\cyl$ and cut--off its contact Hamiltonian $H\in C^\infty(\Delta^{2n-1}_\cyl)$ to a Hamiltonian $\wt H\in C^\infty(\Delta^{2n-1}_\cyl)$ such that its associated contact vector field $\wt X$ satisfies
\begin{itemize}
\item[a.] $\wt X$ vanishes in the region $\{z\geq-1+\frac{2\varepsilon}3,u+u_0\leq 1-\frac{2\varepsilon}3\}$.
\item[b.] $\wt X$ coincides with the radial vector field $X$ in the region $\{z\leq-1+\frac\varepsilon3,1-\frac\varepsilon3\leq u+u_0\}$.
\end{itemize}

\begin{figure}[h!]
\includegraphics[scale=0.6]{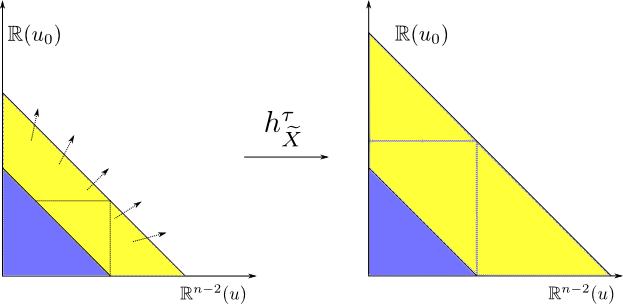} \\
\caption{{\small Cross section $(u,u_0)$ for the expanded domain $h_{\wt X}^\tau(\Delta_\cyl)$.}} \label{fig:FigureOT4}
\end{figure}

Denote by $h^\tau_{\wt X}:(\R^{2n-1},\xi_\st)\longrightarrow(\R^{2n-1},\xi_\st)$ be the $\tau$--time contact flow of the contact vector field $\wt X$: near the region $B$ the contact flow $h^\tau_{\wt X}$ acts as radial expansion, as depicted in Figure \ref{fig:FigureOT4}. The contactomorphism $g_2$ is defined to be $h^{\tau_0}_{\wt X}$ for a large enough time $\tau_0\in\R^+$, and we claim that for such $g_2:(\R^{2n-1},\xi_\st)\longrightarrow(\R^{2n-1},\xi_\st)$ the composition
$$f = g_2\circ g_1:(\Delta_\cyl,\xi_\std)\longrightarrow(\wt\Delta,\xi_\std)$$
satisfies the properties in the statement of the lemma. First, we do have the inclusion $f(\Delta_\e)=g_2(g_1(\Delta_\e))=g_2(\Delta_\e)= \Delta_\e \sse \wt\Delta_1$ since both $g_1$ and $g_2$ are the identity in $\Delta_\e$ by construction. Second, we need to verify that the inclusion $f(B_2)=g_2(g_1(B_2)) \sse \wt\Delta_2$ is satisfied. Indeed, since the $u$--coordinate on the set $g_1(B_2)$ is bounded below by a positive number and the contact flow $g_2$ expands the coordinate $u$ exponentially by construction, we conclude that for large $\tau\in\R^+$ the inclusion $g_2(g_1(B_2)) \sse \{u > 1-\e/3\} \sse \wt \Delta_2$ holds.
\end{proof}

\begin{remark}
The contact embedding $f:(\Delta_\cyl,\xi_\std)\longrightarrow(\wt\Delta,\xi_\std)$ in Lemma \ref{lem: highD emb} is not a strict, in contrast to the contact embedding in Lemma \ref{lem: 3D contact}.\hfill$\Box$
\end{remark}

Lemma \ref{lem: highD emb} is the technical ingredient in order to prove Theorem \ref{thm:OTxD2}, which we now do. The structure of the proof is the same as for its 5--dimensional analogue proven in Subsection \ref{ssec:3to5}. Let us now provide the details.

\begin{proof}[Proof of Theorem \ref{thm:OTxD2}:]
First, we choose a constant $\e\in\R^+$ such that $\e < \e_\univ$ and consider the contact embedding $f:(\Delta^{2n-1}_\cyl,\xi_\std)\longrightarrow(\wt\Delta^{2n-1},\xi_\std)$ constructed in Lemma \ref{lem: highD emb}. Denote by $\rho,R\in\R^+$ the homonymous constants appearing in its statement and consider the conformal factor $c_f \in C^\infty(\Delta_\cyl)$ defined by $f^*\alpha_\std = c_f\alpha_\std$. Now we can proceed as in the 5--dimensional case by defining the Hamiltonian
$$\wt K:f(\Delta^{2n-1}_\cyl)\longrightarrow\R,\quad \wt K = (c_f \cdot K)\circ f^{-1}.$$

The statement of the theorem will be proven if we can find a $2n$--dimensional overtwisted disk in the $(2n+1)$--dimensional contact domain $((M,\a_M)\times D^2(R),\ker(\a_M+\la_\std))$. In order to exhibit the disk, we consider the two domains

$$\wt \Sigma_1 = \{(z, v, t, u, \p) \, : \, (z, u, \p) \in f(\Delta_\cyl), \, \, t \in S^1, \, \, v = \wt K(z, u, \p)\}$$
$$\wt \Sigma_2 = \{(z, v, t, u, \p) \, : \, (z, u, \p) \in f(B), \, \, t \in S^1, \, \, v \in [0, \wt K(z, u, \p)]\}.$$

Notice that for a sufficiently small $\delta\in\R^+$, which appears in the definition of the contact domain $(M,\a_M)$, the hypersurface $\wt\Sigma_1$ is a well--defined subset of $M \x D^2(1)$ since $f(\Delta_\e) \sse \wt \Delta_1$. The second hypersurface $\wt \Sigma_2$ is also well--defined since we have the inclusion $f(B) \sse \wt \Delta_2$. Now the union $D^{2n}=\wt\Sigma_1\cup\wt\Sigma_2$ of the two hypersurfaces $\wt\Sigma_1$ and $\wt\Sigma_2$ endowed with the ambient contact structure is contactomorphic to an overtwisted disk since the map
$$F:(\Op(D^\ot_\e),\eta^\ot_\e)\cong (\Op(\Sigma_1\cup\Sigma_2),\eta^\ot_\e)\longrightarrow (M\times D^2(1),\a_M+u_0d\p_0),$$
$$(z,u_0,\p_0,u,\p,v,t) \longmapsto F(z,u_0,\p_0,u,\p,v,t)=(f(z,u_0,\p_0,u,\p), v\cdot c_f(z,u_0,\p_0,u,\p), t)$$
maps the local model $(D^\ot_\e,\eta^\ot_\e)$ to the contact germ $(D^{2n},\ker(\alpha_M+\lambda_\std))$. This concludes the proof of Theorem \ref{thm:OTxD2}.\end{proof}
% --------------------
\section{Weinstein cobordism from overtwisted to standard sphere} \label{sec: cobord}

The main goal of this section is proving the equivalence (1)$\Longleftrightarrow$(3)$\Longleftrightarrow$(4) in Theorem \ref{thm:main}, which is concluded in Theorems \ref{thm: ps to OT} and \ref{thm: loose to OT} . First, we state an application of the previous section which will be used in their proofs. The contact branched cover \cite[Theorem 7.5.4]{Ge2} along with Theorem \ref{thm:OTxD2} yield the following class of examples of overtwisted contact structures.

\begin{thm}\label{cor: branch cover}
Let $(Y,\xi)$ be a contact manifold and $(D,\xi|_D)$ a codimension--2 overtwisted contact submanifold. A k--fold contact branched cover of $(Y,\xi)$ along $(D,\xi|_D)$ is overtwisted for $k$ large enough.
\end{thm}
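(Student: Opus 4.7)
The plan is to reduce the statement to Theorem~\ref{thm:OTxD2} by tracking the standard neighborhood of the branch locus on the cover side, exploiting the fact that the effective radius of the normal disk grows with the covering degree.

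First, by the contact neighborhood theorem for codimension-two contact submanifolds, a tubular neighborhood of $D$ in $(Y,\xi)$ is strictly contactomorphic to $(D \x D^2(\delta),\ker(\alpha_D + r^2 d\theta))$ for some $\delta \in \R^+$, where $\alpha_D$ is a contact form for $\xi|_D$ and $(r,\theta)$ are polar coordinates on the normal disk.

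Next, I would recall the $k$-fold cyclic contact branched cover of $Y$ along $D$ from \cite{Ge2, NP}. One removes the tubular neighborhood $D \x D^2(\delta)$, takes the $k$-fold cyclic cover of the remaining exterior (which inherits a contact structure by lifting $\xi|_{Y \sm D}$), and glues back in a standard neighborhood of the new branch locus $\wt D$. The key scaling observation is that the $k$-fold cover of the boundary $(D \x S^1,\, \alpha_D + \delta^2 d\theta)$ is $(D \x S^1,\, \alpha_D + k\delta^2 d\wt\theta)$, and this matches the boundary of the standard model $(D \x D^2(\rho),\, \alpha_D + r^2 d\theta)$ precisely when $\rho = \sqrt{k}\,\delta$. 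Consequently, in $(\wt Y, \wt\xi)$ the branch locus $\wt D$ admits a tubular neighborhood contactomorphic to $(D \x D^2(\sqrt{k}\,\delta),\, \ker(\alpha_D + r^2 d\theta))$.

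To conclude, Theorem~\ref{thm:OTxD2} applied to the overtwisted contact manifold $(D, \xi|_D)$ furnishes a constant $R \in \R^+$ such that $(D \x D^2(R),\ker(\alpha_D + r^2 d\theta))$ is overtwisted. Choosing $k$ large enough that $\sqrt{k}\,\delta > R$, this overtwisted thickening embeds contactly in the above neighborhood of $\wt D$, so $(\wt Y, \wt\xi)$ contains an overtwisted disk and is itself overtwisted.

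The main obstacle is justifying cleanly the $\sqrt{k}\,\delta$-scaling for the standard neighborhood of $\wt D$ in the cover; this reduces to matching boundary contact forms across the gluing and smoothing through a suitable interpolation $\alpha_D + f_k(r) d\theta$ with $f_k'(r)>0$, but it is essentially a bookkeeping argument once the branched cover construction of \cite{Ge2, NP} is set up correctly.
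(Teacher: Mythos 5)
Your argument is exactly the paper's: the published proof is the one-line observation that branch covering increases the product neighborhood width of the branch locus, so that Theorem \ref{thm:OTxD2} applies once $k$ is large, and your $\sqrt{k}\,\delta$ computation merely makes that width increase explicit. The details you flag (matching the boundary forms and rescaling $r\mapsto\sqrt{k}\,r$ in the model $\ker(\alpha_D+r^2d\theta)$) are the correct bookkeeping and are consistent with the branched cover constructions of \cite{Ge2,NP}.
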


Theorem \ref{cor: branch cover} follows immediately from Theorem \ref{thm:OTxD2} since a branch cover increases the product neighborhood width of the branch locus; this latter observation has been successfully used in \cite[Section 1]{NP} for producing obstructions to symplectic fillability. In a concise manner, the reason a contact branched cover increases the size of a contact neighborhood of the branch locus is the following. Locally, the contact form near a codimension--2 submanifold $D\sse(Y,\xi)$ with trivial normal bundle can be assumed to be of the form
$$\alpha_Y=\alpha_D+r^2d\theta,$$
where $\alpha_D$ is a contact form for the contact submanifold $(D,\xi|_D)$, and we have smoothly identified $\Op(D)\cong D\times\D^2_\delta(r,\theta)$ for some $\delta\in\R^+$. In this model a $k$--fold branched cover along $D$ is given by the branched map
$$(p,\rho,\vartheta)\longmapsto (p,r,\theta)=\pi(p,\rho,\vartheta)=(p,\rho,k\vartheta),\quad (p,r,\theta)\in D\times\D^2_\delta,$$
where $(p,\rho,\vartheta)$ denote the upstairs coordinates. Thus the pull--back of the contact form is
$$\pi^*\alpha_Y=\alpha_D+(\sqrt{k}r)^2d\theta,$$
which is increasing the contact radius $r\in[0,\delta)$ to a radius of size $\rho\in[0,\sqrt{k}\delta)$, which explains Theorem \ref{cor: branch cover}.
That being said, we now apply Theorem \ref{cor: branch cover} to prove the following theorem.

\begin{thm}\label{lem:cobord}
In every dimension, there is a Weinstein cobordism $(W, \lambda, \p)$ such that the concave end $(\dd_-W,\lambda)$ is overtwisted and the convex end $(\dd_+W,\lambda)\cong (S^{2n-1},\xi_\std)$.
\end{thm}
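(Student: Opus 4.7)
The plan is to construct $W$ by reversing a contact $(+1)$--surgery along a loose Legendrian sphere inside the standard contact sphere. The theorem statement allows any overtwisted manifold as the concave end, which gives us flexibility in choosing the surgery locus; the overtwistedness of the outcome will be verified directly inside the surgery trace using Theorem \ref{thm:OTxD2} together with the inductive hypothesis.

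For $n \ges 3$, pick a Legendrian $(n-1)$--sphere $\Lambda \sse (S^{2n-1}, \xi_\std)$ admitting a loose chart $B^3 \x \Op(Z)$, where $Z \sse T^*S^{n-2}$ is the zero section. Such a $\Lambda$ is obtained by sufficiently stabilizing the standard Legendrian unknot inside a Darboux chart so as to introduce a loose chart in the transverse direction. Perform contact $(+1)$--surgery on $\Lambda$ in the sense of Avdek \cite{Avdek}; this is a concave handle attachment producing a new contact manifold $(Y, \xi')$. Since contact $(+1)$-- and $(-1)$--surgeries on dual Legendrians are mutually inverse, attaching the Weinstein $n$--handle along the cocore Legendrian $\Lambda^\vee \sse (Y, \xi')$ produces a Weinstein cobordism $W$ satisfying $\dd_- W = (Y, \xi')$ and $\dd_+ W = (S^{2n-1}, \xi_\std)$. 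The three--dimensional base case $n = 2$ is covered by Eliashberg's classical construction.

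It remains to verify that $\xi'$ is overtwisted. The key geometric point is that the $(+1)$--handle trace along the loose $\Lambda$ contains a thick product neighborhood of the form $(\R^3 \x D^{2n-4}(R), \ker(\alpha_\ot + \lambda_\std))$. The loose chart $B^3 \x \Op(Z)$ near $\Lambda$ receives, after surgery, a three--dimensional overtwisted model $\alpha_\ot$ on the $B^3$ factor; this is precisely where the inductive hypothesis of Theorem \ref{thm:main} in dimension $2n - 3$ enters, supplying the lower--dimensional overtwisted submodel along the transverse slice of the $(+1)$--handle. Together with the transverse thickness $D^{2n-4}(R)$ in the directions of $Z$, Theorem \ref{thm:OTxD2} applied to this configuration yields an overtwisted disk inside $(Y, \xi')$, certifying that $(Y, \xi')$ is overtwisted.

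The main obstacle is explicitly identifying the three--dimensional overtwisted factor inside the $(+1)$--handle trace. This requires carefully matching Avdek's symplectic model of the $(+1)$--handle with the local overtwisted model of Section \ref{sec:OTxD2}, extracting $\alpha_\ot$ along the $B^3$ factor of the loose chart. The inductive hypothesis is essential: it ensures that the transverse slice of the $(+1)$--surgery (a lower--dimensional contact manifold obtained by $(+1)$--surgery on the loose Legendrian unknot in dimension $2n - 3$) is itself overtwisted, which is the base ingredient needed by Theorem \ref{thm:OTxD2} to upgrade this to a full $(2n-2)$--dimensional overtwisted disk.
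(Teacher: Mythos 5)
Your cobordism construction (reversing a $(+1)$--surgery by a Weinstein handle attachment along the dual Legendrian) is fine in principle, but the overtwistedness verification has a genuine gap, and it is exactly the hard point. You assert that the $(+1)$--handle trace along a loose $\Lambda$ contains a contact embedding of $(\R^3\x D^{2n-4}(R),\ker(\alpha_\ot+\lambda_\std))$ with $R$ the large radius required by Theorem \ref{thm:OTxD2}, but you give no mechanism for this, and the local geometry does not supply it: the transverse directions of the loose chart are a (small) neighborhood $\Op(Z)$ of the zero section in $T^*S^{n-2}$, which is neither a ball with the radial Liouville form nor of controllable size, and nothing in Avdek's surgery model creates the definite large thickness that Theorem \ref{thm:OTxD2} needs. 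Note also that ``$(+1)$--surgery on a loose Legendrian yields an overtwisted manifold'' is precisely the same-dimensional equivalence $1=4$ (equivalently, producing condition 2b for $(Y,\xi')$), so your argument is either assuming an instance of the theorem in dimension $2n-1$ or leaving its key step unproved; the inductive hypothesis in dimension $2n-3$ cannot substitute for it. Your description of the slice is also internally inconsistent (a $3$--dimensional model on the $B^3$ factor versus a transverse slice of dimension $2n-3$): in the actual geometry of this surgery (cf.\ Theorem \ref{thm: down surgery}) the natural overtwisted slices are the $3$--dimensional jet spaces $\J^1(S^1_x)$ of meridians, whose overtwistedness is classical rather than inductive, and they assemble only into a plastikstufe, not into a thick product neighborhood; upgrading a plastikstufe to overtwistedness is exactly what requires the cobordism you are trying to build, so that route would be circular.

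The paper obtains the needed thickness by a different device: contact branched covers. Using the inductive hypothesis ($1=5$ in dimension $2n-3$), the submanifold $(S^{2n-3},\xi_1)=\op{OB}(T^*S^{n-2},\tau^{-1})$ is overtwisted, and it sits as a codimension--$2$ contact submanifold of $(S^{2n-1},\xi_1)=\op{OB}(A^{2n-2}_1,\tau^{-1})$ compatibly with the Milnor open book; a $k$--fold branched cover along it multiplies the width of its product neighborhood, so Theorem \ref{thm:OTxD2} applies for $k$ large (Theorem \ref{cor: branch cover}), and the cover is identified, via $f(z)=\ol{z}_1^{2k}+\ol{z}_2^2+\cdots+\ol{z}_n^2$, with $\op{OB}(A^{2n-2}_{2k-1},\tau_1^{-1}\circ\cdots\circ\tau_{2k-1}^{-1})$, an overtwisted sphere. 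The Weinstein cobordism to $(S^{2n-1},\xi_\std)$ is then built by attaching $4k-2$ handles realizing positive Dehn twists on the zero sections (Proposition \ref{prop: ob surgery}), since $\op{OB}(A^{2n-2}_{2k-1},\tau_1\circ\cdots\circ\tau_{2k-1})$ is the iteratively positively stabilized trivial open book. If you want to salvage your approach, you would need an independent proof that the $(+1)$--surgery model along a loose Legendrian contains a neighborhood as in condition 2b with the specified radius; as it stands, that claim is unsupported and is essentially equivalent to what you are trying to prove.
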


Theorem \ref{lem:cobord} is proven assuming the equivalence (1)$\Longleftrightarrow$(2) in Theorem \ref{thm:main} which has been proven in Section \ref{sec:OTxD2}, and it also uses the inductive hypothesis in the dimension $n$. The Weinstein cobordism $(W,\lambda,\p)$ in the statement of Theorem \ref{lem:cobord} is smoothly non--trivial and it is constructed such that $\dd_-W$ is a standard smooth sphere.

% --------------------
\begin{proof}[Proof of Theorem \ref{lem:cobord}] Let us construct a Weinstein cobordism $(W^{2n},\lambda,\varphi)$ of finite type from an overtwisted contact structure $(S^{2n-1},\xi_{ot})$ to the standard contact sphere $(S^{2n-1},\xi_\std)$. In order to do that, consider the $A^{2n-2}_k$--Milnor fibre obtained as an $A_k$--plumbing of $k$ copies of the Weinstein manifold $(T^*S^{n-1},\la_\std,\p_\std)$, with its induced Weinstein structure. The construction of the Weinstein cobordism $(W,\la,\p)$ now has two steps.

First, we prove that the contact manifold
$$(S^{2n-1},\xi_k)=\op{OB}(A^{2n-2}_{2k-1},\tau^{-1}_1\circ\ldots\circ\tau^{-1}_{2k-1})$$
is overtwisted for $k$ large enough, and second, we construct the Weinstein cobordism to the standard contact sphere $(S^{2n-1},\xi_\std)$.

Let us first prove overtwistedness of $(S^{2n-1},\xi_k)$ for $k$ large enough. The right--veering criterion \cite{HKM} shows that $(S^3,\xi_1)$ is an overtwisted contact 3--fold, which can also be proven explicitly by finding an overtwisted 2--disk, and thus $(S^3,\xi_k)$ are overtwisted for all $k$. Now the inductive hypothesis on the dimension and the equivalence (1)$\Longleftrightarrow$(5) in Theorem \ref{thm:main} for $(2n-3)$--dimensional manifolds allows us to assume that $(S^{2n-3}, \xi_1) = \op{OB}(T^*S^{n-2}, \tau^{-1})$ is an overtwisted contact manifold.

In addition, the contact manifold $(S^{2n-3}, \xi_1)$ also admits a contact embedding into the contact manifold $(S^{2n-1},\xi_1)=\op{OB}(A^{2n-2}_1,\tau^{-1})$ compatible with the open book decomposition which corresponds to the cotangent bundle of an unknotted equatorial $S^{n-2}\subseteq S^{n-1}$. Then Theorem \ref{cor: branch cover} implies that the contact $k$--branched cover $(Y_k,\zeta_k)$ of the contact structure $(S^{2n-1},\xi_1)$ along the contact divisor $(S^{2n-3},\xi_1)$ is an overtwisted contact manifold for $k$ large enough. Note that $Y_k$ is diffeomorphic to the standard smooth sphere $S^{2n-1}$ because the smooth submanifold $S^{2n-3}$ is smoothly unknotted.

Let us now show that the contact structure $(Y_k,\zeta_k)=(S^{2n-1},\zeta_k)$ is supported by the open book decomposition $\op{OB}(A^{2n-2}_{2k-1},\tau^{-1}_1\circ\ldots\circ\tau^{-1}_{2k-1})$ and hence it is contact isotopic to $(S^{2n-1},\xi_k)$. First note that the projection map for the open book $\op{OB}(A^{2n-2}_1,\tau^{-1}_1)$ is given by argument of the map
$$f:S^{2n-1}\subset\C^{2n}\longrightarrow\C,\quad f(z_1,\ldots,z_n)=\overline{z}_1^2+\ldots+\overline{z}_n^2.$$
Then the overtwisted submanifold $(S^{2n-3},\xi_1)$ is cut out by the equation $\{z_1=0\}$ and the $k$--branched cover along it can be realized by the map $z_1\longmapsto z_1^k$. Thus the contact structure $(Y_k,\zeta_k)$ is supported by the open book induced by the argument of the map
$$f:S^{2n-1}\subset\C^{2n}\longrightarrow\C,\quad f(z_1,\ldots,z_n)=\overline{z}_1^{2k}+\overline{z}_2^2+\ldots+\overline{z}_n^2,$$
which is $\op{OB}(A^{2n-2}_{2k-1},\tau^{-1}_1\circ\ldots\circ\tau^{-1}_{2k-1})$. This proves the contactomorphism
$$(S^{2n-1},\xi_k)\cong (S^{2n-1},\zeta_k),$$
and hence the fact that $(S^{2n-1},\xi_k)$ is overtwisted for $k$ large enough.

The second step is to argue that $(S^{2n-1},\xi_k)$ is Weinstein cobordant to $(S^{2n-1},\xi_\std)$, which then constructs the required cobordism in the statement of Theorem \ref{lem:cobord} by taking $k$ large enough. Notice that by Theorem \ref{thm:NegStabCS} we have the contactomorphism
$$\op{OB}(A^{2n-2}_{2k-1}, \tau_1\circ\ldots\circ\tau_{2k-1}) = (S^{2n-1}, \xi_\std),$$
since this open book is just the trivial open book $(D^{2n-2}, \op{id})$ positively stabilized $(2k-1)$ times. Then we can perform two Weinstein handle attachments as described in Proposition \ref{prop: ob surgery} to each zero section in the Weinstein page $A^{2n-2}_{2k-1}$, giving a total of $(4k-2)$ critical handle attachments, which construct the Weinstein cobordism from $(S^{2n-1}, \xi_k)$ to $(S^{2n-1}, \xi_\std)$. \end{proof}

% --------------------
The following proposition is the remaining ingredient before we are able to conclude the equivalence (1)$\Longleftrightarrow$(3) from the above Theorem \ref{lem:cobord}.

\begin{prop} \label{prop: loose flex}
Let $(Y, \xi)$ be a contact manifold, and suppose that the standard Legendrian unknot is a loose Legendrian submanifold in $(Y, \xi)$. Let $(W, \lambda, \p)$ be an arbitrary Weinstein cobordism and let $SY$ be the symplectization of $Y$. Then connected sum cobordism $W \ol{\#} SY$ is always a flexible Weinstein cobordism.
\end{prop}

\begin{proof}
Since the symplectization $SY$ is a Weinstein trivial product, the critical points of the Weinstein cobordism $W \ol{\#} SY$ are the same as the critical points of the cobordism $(W,\la,\p)$. Let $p$ be a critical point in $W \ol{\#} SY$ of index $n$, $M = \p^{-1}(\p(c) - \e)$ a level set of $W$, and $\Lambda \sse M \# Y$ the Legendrian attaching sphere of the critical point $p$.

Let $\Lambda_0 \sse (Y,\xi)$ be the standard Legendrian unknot, and $U \sse M \# Y$ the union of a Darboux chart containing $\Lambda_0$ and a loose chart for $\Lambda_0$. Since $\Lambda_0$ is loose as a Legendrian in $(Y,\xi)$ and the Legendrian $\Lambda$ is the descending sphere of a critical point of $(W,\la,\p)$, we know that $\Lambda$ is disjoint from $U$, and therefore their Legendrian connected sum $\Lambda \# \Lambda_0$ is a loose Legendrian, even with the same loose chart as $\Lambda_0$. Then, the Legendrian connected sum $\Lambda \# \Lambda_0$ is Legendrian isotopic to $\Lambda$ since the Legendrian $\Lambda_0$ is the standard Legendrian unknot, and thus follows that the Legendrian $\Lambda$ is loose.
\end{proof}

This allows us to prove the equivalence (1)$\Longleftrightarrow$(3), which is the following theorem.

\begin{thm} \label{thm: ps to OT}
Let $\Lambda_0$ be the standard Legendrian unknot inside a contact manifold $(Y, \xi)$. If $\Lambda_0$ is a loose Legendrian then $(Y, \xi)$ is overtwisted.
\end{thm}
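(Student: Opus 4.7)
The plan is to combine the cobordism produced by Theorem \ref{lem:cobord} with the flexibility criterion of Proposition \ref{prop: loose flex} and then apply Proposition \ref{prop: flexible ot}. Concretely, let $(W_0,\lambda_0,\varphi_0)$ be the Weinstein cobordism from Theorem \ref{lem:cobord}, so that $(\dd_-W_0,\ker\lambda_0)$ is an overtwisted contact sphere $(S^{2n-1},\xi_\ot)$ and $(\dd_+W_0,\ker\lambda_0)=(S^{2n-1},\xi_\std)$. Let $SY$ denote the symplectization of $(Y,\xi)$, viewed as a trivial Weinstein cobordism from $(Y,\xi)$ to itself, and consider the vertical connected sum
$$W := W_0 \,\ol{\#}\, SY.$$

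Next, I would identify its two contact boundaries. By the description of the vertical connected sum given in Section \ref{background}, every regular level set of $W$ is the contact connected sum of the corresponding level sets of $W_0$ and $SY$. Hence
$$(\dd_-W,\ker\lambda) \;=\; (S^{2n-1},\xi_\ot)\,\#\,(Y,\xi), \qquad (\dd_+W,\ker\lambda) \;=\; (S^{2n-1},\xi_\std)\,\#\,(Y,\xi)\;\cong\;(Y,\xi),$$
using that the standard sphere is the identity for the contact connected sum. The negative boundary is overtwisted because it contains the overtwisted disk from the $(S^{2n-1},\xi_\ot)$ summand (the connected sum is performed away from this disk).

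Now I would invoke Proposition \ref{prop: loose flex}: since the standard Legendrian unknot $\Lambda_0\subseteq(Y,\xi)$ is loose by hypothesis, the Weinstein cobordism $W = W_0\,\ol{\#}\,SY$ is flexible. Proposition \ref{prop: flexible ot} then applies: a flexible Weinstein cobordism with overtwisted concave end has overtwisted convex end, so $(\dd_+W,\ker\lambda)\cong(Y,\xi)$ is overtwisted, which is the conclusion.

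The only genuinely substantive ingredient has already been carried out in the excerpt, namely the construction of $W_0$ in Theorem \ref{lem:cobord} (which is where the inductive hypothesis and Theorem \ref{thm:OTxD2} are consumed) and the flexibility criterion of Proposition \ref{prop: loose flex}. The remaining step is essentially a bookkeeping argument about boundaries of vertical connected sums, and the only minor care needed is verifying that the connected sum of $(Y,\xi)$ with the overtwisted sphere $(S^{2n-1},\xi_\ot)$ retains an overtwisted disk. This is immediate because the connected sum is performed at a point disjoint from any chosen overtwisted disk, so the disk embeds into the connected sum unchanged.
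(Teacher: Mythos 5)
Your proposal is correct and follows essentially the same route as the paper: take the cobordism of Theorem \ref{lem:cobord}, form the vertical connected sum with the symplectization $SY$, use Proposition \ref{prop: loose flex} to conclude flexibility from the looseness of the unknot, and then apply Proposition \ref{prop: flexible ot} to the overtwisted concave end $\dd_-W \# Y$ to conclude that $\dd_+W \# Y \cong (Y,\xi)$ is overtwisted. The extra bookkeeping you supply (the boundary identifications and the persistence of the overtwisted disk under connected sum) is consistent with, and only slightly more detailed than, the paper's argument.
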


\begin{proof} Let $(W,\la,\p)$ be the cobordism constructed in Theorem \ref{lem:cobord}, and apply Proposition \ref{prop: loose flex} to conclude that the vertical connected sum $W \ol{\#} SY$ is a flexible Weinstein cobordism. The concave end of the cobordism $\dd_-(W \ol{\#} SY) = \dd_- W \# Y$ is an overtwisted contact manifold since the contact boundary $\dd_- W$ is overtwisted itself, and thus Proposition \ref{prop: flexible ot} implies that the contact convex end
$$\dd_+(W \ol{\#} SY) = (S^{2n-1}, \xi_\std) \# (Y, \xi) \cong (Y, \xi)$$
is an overtwisted contact manifold as well.\end{proof}

Theorem \ref{thm: ps to OT} also implies the equivalence (1)$\Longleftrightarrow$(4). Indeed, the standard unknot in $(Y, \xi)$ is defined by the inclusion of a small Darboux chart in $(Y,\xi)$ and thus if the contact manifold contains a small plastikstufe with trivial rotation, the unknot must be in the complement. Therefore, Theorems \ref{thm: PS to loose} and \ref{thm: ps to OT} imply

\begin{thm} \label{thm: loose to OT}
Let $(Y,\xi)$ be a contact manifold containing a small plastikstufe with spherical core and trivial rotation. Then $(Y, \xi)$ is overtwisted.
\end{thm}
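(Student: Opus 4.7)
The plan is to combine the two preceding theorems in a direct way. By hypothesis, the plastikstufe $\mathcal{PS}\sse(Y,\xi)$ is small, meaning it is contained in a smoothly embedded ball $B\sse Y$. Since $Y$ is a manifold, we can pick a point $p\in Y\sm B$ and a small Darboux chart $U\sse Y\sm B$ around $p$. The standard Legendrian unknot $\Lambda_0\sse(Y,\xi)$ is by definition the inclusion of the standard unknot in $\R^{2n-1}$ into any Darboux chart of $Y$, and since all such inclusions are Legendrian isotopic we may choose this Darboux chart to be $U$. In particular we obtain a representative of $\Lambda_0$ that is entirely disjoint from the small plastikstufe $\mathcal{PS}$.

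Next, I would invoke Theorem \ref{thm: PS to loose}: any Legendrian disjoint from a small plastikstufe with spherical core and trivial rotation is loose. Applied to the representative of $\Lambda_0$ constructed above, this gives that $\Lambda_0$ is loose in $(Y,\xi)$.

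Finally, I would apply Theorem \ref{thm: ps to OT}, which says precisely that if the standard Legendrian unknot is loose in $(Y,\xi)$, then $(Y,\xi)$ is overtwisted. This closes the argument.

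The only point that requires any care is the choice of the representative of the unknot, so that it actually lies in the complement of $\mathcal{PS}$; the smallness assumption guarantees this is possible, since a smooth ball has non-empty complement in $Y$ (assuming $Y$ is connected of positive dimension, as is implicit) and a Darboux chart can be placed in any open set. No inductive hypothesis is needed here: the work is all contained in the two previous theorems, so this statement is just a short corollary and I do not anticipate an obstacle.
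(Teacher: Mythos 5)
Your argument is correct and is essentially the paper's own proof: the authors also note that the standard unknot, being defined by a small Darboux chart, can be taken in the complement of the small plastikstufe, and then combine Theorem \ref{thm: PS to loose} with Theorem \ref{thm: ps to OT}. No issues to flag.
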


Thus far in the article we have proven the equivalences (1)$\Longleftrightarrow$(2)$\Longleftrightarrow$(3)$\Longleftrightarrow$(4) in Theorem \ref{thm:main}. The following two sections are respectively dedicated to the proofs of the two remaining equivalences, that is, the characterization in terms of surgeries (1)$\Longleftrightarrow$(5), and the criterion in terms of open book decompositions (1)$\Longleftrightarrow$(6).

% --------------------
\section{(+1)--surgery on loose Legendrians} \label{sec: surgery}

In this section we prove the equivalence (1)$\Longleftrightarrow$(5) in Theorem \ref{thm:main} by using the characterization given by Theorem \ref{thm: loose to OT}.  For our purpose, we use the following model of contact $(+1)$--surgery on a Legendrian sphere, defined in the article \cite[Section 9]{Avdek}.

Let $\Lambda \sse (Y, \xi)$ be a Legendrian sphere in a contact manifold. A neighborhood of the Legendrian $\Lambda$ can be identified with a neighborhood of the zero section in the first--jet space
$$(\J^1(S^{n-1}),\ker\a_\std) = (T^*S^{n-1} \x \R(z),\ker(dz-\la_\std)).$$

Consider the smooth manifold $Y'$ obtained by removing the piece $D^*S^{n-1} \x (0,1)$ from $Y$, and then gluing the boundary to itself with the identification $(x, 0) \sim (\tau^{-1}(x), 1)$ and $(x, t) \sim (x, t')$ for $x \in \dd D^*S^{n-1}$, where $\tau: T^*S^{n-1}\longrightarrow T^*S^{n-1}$ denotes the Dehn twist along a zero section \cite{Se97}. Note that $Y'$ is smooth manifold since the diffeomorphism $\tau$ is compactly supported, and it has a canonical contact structure $\xi'$ because the gluing diffeomorphism $\tau$ is a symplectomorphism.

\begin{definition}\label{def:sur}
The contact manifold $(Y', \xi')$ obtained with the procedure above is said to be the \emph{contact $(+1)$--surgery of $(Y, \xi)$ along $\Lambda$}.\hfill$\Box$
\end{definition}

\begin{remark}
Given a Legendrian sphere $\Lambda \sse (Y, \xi)$, the contactomorphism type of the contact surgery $(Y', \xi')$ depends on the chosen parametrization $f: S^{n-1} \longrightarrow \Lambda$ of the Legendrian submanifold. In fact \cite[Theorem A]{DRE} shows that the class $[\tau] \in \pi_0\op{Symp}(T^*S^{n-1})$ genuinely depends on this parametrization. However, in our context we are able to dismiss this technical distinction since any two parametrizations of \emph{loose} Legendrian spheres are ambiently contact isotopic \cite[Theorem 1.2]{loose}.\hfill$\Box$
\end{remark}

\begin{remark}
Since the symplectomorphism $\tau$ does not preserve the Liouville form the gluing above should be technically performed in the region of the contactization given by $\{0 \leq z \leq f\} \sse T^*S^{n-1} \x \R$, where $f \in C^\infty(T^*S^n)$ is a positive primitive of $\lambda - \tau^*\lambda$.\hfill$\Box$
\end{remark}

In this surgery model introduced in Definition \ref{def:sur}, we can prove the equivalence (4)$\Longleftrightarrow$(5) in Theorem \ref{thm:main}, which also establishes \cite[Conjecture 9.16]{Avdek}.

\begin{thm} \label{thm: down surgery}
Let $\Lambda \sse (Y, \xi)$ be a loose Legendrian submanifold. Then the contact $(+1)$--surgery of $(Y,\xi)$ along $\Lambda$ contains a small plastikstufe with spherical core and trivial rotation.
\end{thm}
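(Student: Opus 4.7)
The plan is to construct the plastikstufe directly in the surgery region, realizing its overtwisted 2-disk factor from the 3-dimensional slice of the loose chart of $\Lambda$ and its spherical core from the cotangent zero-section factor of that chart.

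First I would normalize the geometric picture. Using that parametrizations of loose Legendrian spheres are ambiently contact isotopic \cite[Theorem 1.2]{loose}, arrange the loose chart $(B^3 \x \Op(Z), \Lambda_0 \x Z)$ of $\Lambda \cong S^{n-1}$ so that $Z = S^{n-2}$ is the zero section of $T^*S^{n-2}$ placed as the equator of $\Lambda$, and $\Lambda_0 \sse B^3$ is a standard stabilized Legendrian arc transverse to that equator. I would then localize the computation to this chart: because the Dehn--Seidel twist $\tau$ on $T^*S^{n-1}$ used to define the $(+1)$-surgery can be chosen supported in an arbitrarily small neighborhood of the zero section, near the loose chart its effect is concentrated in the 3-dimensional $B^3$ factor, with the $Z$-direction carried along passively.

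Second I would perform the explicit surgery computation. In the 3-dimensional slice, contact $(+1)$-surgery on the stabilized arc $\Lambda_0$ produces a region containing a piecewise smooth overtwisted disk $(D^\ot_\e, \eta^\ot_\e)$ in the sense of Subsection \ref{ssec:otdisks}: the zig-zag of the stabilization combined with the Dehn-twist gluing matches the graph-plus-sublevel construction $\Sigma_1 \cup \Sigma_2$. Taking the product of this 2-dimensional overtwisted disk with $Z = S^{n-2}$ yields the required plastikstufe $D^2_\ot \x S^{n-2}$ embedded in the surgered manifold.

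Third I would verify the qualitative hypotheses. Smallness is immediate because the plastikstufe sits inside the union of the loose chart $B^3 \x \Op(Z)$ with the surgery handle, a topologically trivial region contained in a smooth ball. For trivial rotation: the open leaf $\Lambda_0 \x Z$ of the plastikstufe is, away from the overtwisted core, a piece of the original Legendrian sphere $\Lambda$; the pre-surgery hemispheres of $\Lambda$ on either side of $Z$ provide Legendrian disks capping $\Lambda_0 \x Z$, and matching these across the surgery via the co-core yields the required Legendrian disk in the surgered manifold, whose unique framing witnesses trivial rotation as in the discussion in Section \ref{background}. The main obstacle is the explicit computation in the second step: matching the product contact form $\alpha_\std + \lambda_\std$ of the loose chart, after identifying the surgery gluing, with the model $(D^\ot_\e, \eta^\ot_\e)$ of Subsection \ref{ssec:otdisks} requires a careful Hamiltonian calculation so that the zig-zag of $\Lambda_0$ lines up with the pieces $\Sigma_1$ and $\Sigma_2$, including the transition between the $T^*S^1$ and $\C$ fiber identifications.
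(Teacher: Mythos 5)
Your overall skeleton---slice the surgered region into an $S^{n-2}$-family of $3$-dimensional contact pieces over the equator of $\Lambda$, find an overtwisted disk in each slice, and certify smallness and trivial rotation by capping a leaf with a Legendrian disk inside the sphere $\Lambda$---is the same as the paper's. But your localization of the surgery has a genuine gap. The $(+1)$-surgery is performed along the whole sphere $\Lambda$ via the Dehn--Seidel twist of $T^*S^{n-1}$, which is defined by the normalized geodesic flow of the round metric; near the equator it moves points in the $Z$-directions, so it is not of the form (twist in the $B^3$ factor) $\x$ (identity in the $\Op(Z)$ factor), and shrinking its support in the fiber directions does not confine it over the portion of $\Lambda$ lying in the loose chart. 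Relatedly, ``contact $(+1)$-surgery on the stabilized arc $\Lambda_0$'' is not a defined operation: surgery requires a closed Legendrian. The correct slices are not the $B^3$ factor of the loose chart but the jet spaces $\J^1(S^1_x)\cong T^*S^1\x\R$ of the meridian circles through the points $x\in S^{n-2}$, after presenting $\Lambda$ as the zero section of $\J^1(S^{n-1})$ stabilized over the equator (this is your normalization in substance, though it rests on the $h$-principle for loose Legendrians rather than on the parametrization statement you cite). Since meridians are geodesics, the Dehn twist preserves each $T^*S^1_x$, and the image of each slice in the surgered manifold is the $3$-dimensional $(+1)$-surgery along the stabilized meridian knot---a closed stabilized Legendrian circle. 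Overtwistedness of that $3$-manifold is a standard $3$-dimensional fact; no identification with the model $(D^\ot_\e,\eta^\ot_\e)$ of Subsection \ref{ssec:otdisks} is needed, nor possible, since that disk is $(2n-2)$-dimensional and cannot sit inside a $3$-dimensional slice. Thus the ``careful Hamiltonian calculation'' you flag as the main obstacle is both unexecuted and the wrong target; what is actually needed is the geodesic-invariance observation that makes the meridian slices surgery-compatible, together with the rotational symmetry in $x\in S^{n-2}$, which assembles the family of $2$-dimensional overtwisted disks into a plastikstufe with spherical core.

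Your verification of the qualitative hypotheses also does not go through as written. Smallness is not ``immediate'': the surgery region is a neighborhood of all of $\Lambda$, not of the loose chart, and after attaching the concave handle the union of that neighborhood with the handle is not obviously contained in a smooth ball of the surgered manifold. The efficient route (and the paper's) is to prove smallness and trivial rotation simultaneously by exhibiting an open leaf of the plastikstufe inside a Legendrian disk: in each surgered slice $M_x$ isotope the leaf's Legendrian arc to a small arc of $\Lambda\cap\J^1(S^1_x)$ disjoint from the poles $S^0\x\R$, perform this symmetrically in $x$ so that the leaf is isotoped into an annulus $S^{n-2}\x[0,1]$ contained in a copy of $\Lambda$ surviving in the complement of the removed region, and cap that annulus inside the sphere $\Lambda$. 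Your proposed capping ``across the surgery via the co-core'' is neither needed nor well defined for a concave handle; the capping disk should be taken entirely inside $\Lambda$.
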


\begin{figure}[h!]
\centering
  \includegraphics[scale=0.5]{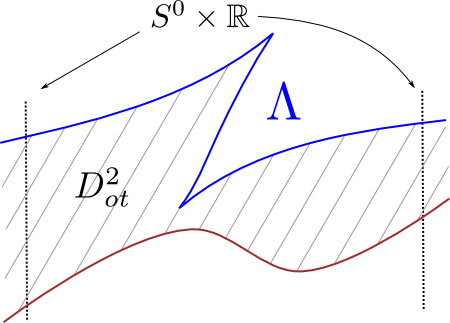}
  \caption{The overtwisted disk inside $(M_x, \xi')$. Here we are viewing $M_x$ as presented by surgery in the front projection of $\J^1(S^1)$. In particular, the transverse curve $S^0 \x \R = \{p = 0, q \in S^0\}$. Since the boundary of $D^2_\ot$ and the surgery curve $\Lambda$ both have positive slope in the front, we can choose $D^2_\ot$ to lie in the region $p>0$, in particular making it disjoint from $S^0 \x \R$.}
  \label{fig:surgery OT disk}
\end{figure}

\begin{proof}
Since the Legendrian sphere $\Lambda$ is loose, we can choose a Legendrian sphere $\wt{\Lambda}$ whose spherical stabilization gives the Legendrian $\Lambda$ \cite{loose}. Choose coordinates in a neighborhood of the Legendrian $\wt\Lambda$ identifying it with a neighborhood of the zero section in the jet space $(T^*S^{n-1} \x \R,\ker\alpha_\std)$, and we can then represent the original Legendrian $\Lambda$ as the zero section stabilized over the equator $S^{n-2} \subseteq S^{n-1}$. For a fixed point $x \in S^{n-2}$ in the equator, define the circle $S^1_x\subseteq S^{n-1}$ to be the unique meridian passing through the point $x$ and the north and south poles, and consider the submanifold $\J^1(S^1_x) \sse T^*S^{n-1} \x \R$. The jet space $\J^1(S^1_x)$ is a $3$-dimensional contact submanifold contactomorphic to $T^*S^1 \x \R$, and under this contactomorphism the intersection $\Lambda \cap \J^1(S^1_x)$ is given as the stabilization of the zero section. Note also that for $x \neq y$, we can identify $\J^1(S^1_x) \cap \J^1(S^1_y) \cong S^0 \x \R$ where $S^0$ is the union of the north and south poles.

Because the Dehn twist $\tau:T^*S^{n-1}\longrightarrow T^*S^{n-1}$, which is used to perform the contact surgery, is a symplectomorphism defined using the geodesic flow on the sphere and the meridian $S^1_x$ is a geodesic submanifold, it necessarily preserves the submanifold $T^*S^1_x$. Now, if we let
$$q:(Y \sm \Op(\Lambda),\xi)\longrightarrow (Y',\xi')$$
be the quotient map realizing the contact $(+1)$--surgery on $\Lambda$, the image $q(\J^1(S^1_x))$ is a contact submanifold $M_x$ which is itself contactomorphic to the contact $(+1)$--surgery of the 1--jet space $\J^1(S^1_x)$ along the stabilized Legendrian $\Lambda\cap\J^1(S^1_x)$. Then the contact manifold $(M_x,\xi')$ is overtwisted for every $x\in S^{n-2}$, even in the complement of the submanifold $S^0 \x \R$. See \cite[Theorem 1.2]{DGS} and \cite[Exercise 11.2.10]{OS} for details on an overtwisted disk for $(M_x,\xi')$, and see Figure 7 for a schematic depiction. The entire picture is symmetric about $x\in S^{n-2}$, and thus the construction defines a plastikstufe $\mathcal{P}$ with spherical core.

It remains to show that this plastikstufe $\mathcal{P}$ has trivial rotation class and that it is contained in a smooth ball. We prove these claims simultaneously by showing that an open leaf of $\mathcal{P}$ is contained in a Legendrian disk. Indeed, an open leaf of $\mathcal{P}$ is given as the union of Legendrian arcs in $M_x$ and we can consider an isotopy between this arc and a small Legendrian arc in $\Lambda\cap\J^1(S^1_x)$ disjoint from the two vertical lines $S^0 \x \R$. Then by considering this symmetrically with respect to the point $x \in S^{n-2}$, we get an isotopy from an open leaf of $\mathcal{P}$ to an annulus $S^{n-2} \x [0,1] \sse \Lambda$, and since the Legendrian $\Lambda$ is a sphere this annulus extends to a Legendrian disk inside the Legendrian $\Lambda$.
\end{proof}
This concludes the equivalence (1)$\Longleftrightarrow$(5) in Theorem \ref{thm:main}. This equivalence already suffices to prove the two applications Proposition \ref{prop: concord} and Corollary \ref{thm: weinstein exist} on the existence of Weinstein cobordisms with an overtwisted concave end, which we explain in Section \ref{ssec:cons}. However, we follow the natural order and proceed with the remaining equivalence in the statement of Theorem \ref{thm:main}.

\section{Stabilization of Legendrians and open books} \label{sec:NegStab}
In this section we conclude the proof of Theorem \ref{thm:main}, by proving the equivalence (3)$\Longleftrightarrow$(6). To do this, in Subsection \ref{ssec:looseob} we will relate two known procedures in contact topology: the stabilization of a Legendrian submanifold and the stabilizations of a compatible open book. The link between these two procedures can be established through Lagrangian surgery \cite{Po}, also referred to as Polterovich surgery, the details of which are first explained in Subsection \ref{ssec:cone/cusp}. The results in Subsections \ref{ssec:cone/cusp} and \ref{ssec:looseob} imply the following result.

\begin{thm} \label{thm: negOB to loose}
Let $(S^{2n-1}, \xi_-)$ be the contact manifold supported by the open book whose page is $(T^*S^{n-1},\lambda_\std)$ and whose monodromy is the left handed Dehn twist along the zero section. Then the standard Legendrian unknot in $(S^{2n-1}, \xi_-)$ is loose.
\end{thm}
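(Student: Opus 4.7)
The plan is to construct an explicit loose chart for the standard Legendrian unknot $\Lambda_0 \sse (S^{2n-1}, \xi_-)$ by using the correspondence between Polterovich (Lagrangian) surgery in the page of an open book and Legendrian stabilization developed in Subsections \ref{ssec:cone/cusp} and \ref{ssec:looseob}. The overall strategy is to realize some Legendrian $\widetilde\Lambda$ as a Legendrian sphere obtained by gluing two Legendrian disks arising from Lagrangian disks in the page $T^*S^{n-1}$, so that the negative Dehn twist monodromy $\tau^{-1}$ manifests as a cusp/stabilization in the local model, and then to identify $\widetilde\Lambda$ with $\Lambda_0$ up to Legendrian isotopy.

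Concretely, first I would choose two Lagrangian disks $D_\pm \sse T^*S^{n-1}$ (for instance cotangent fibers over antipodal points of the zero section $Z$), each having Legendrian boundary on the binding and each transverse to $Z$ at one point. Lifting $D_\pm$ into two half-pages of the open book $\op{OB}(T^*S^{n-1},\tau^{-1})$ and gluing along the binding produces a Legendrian sphere $\widetilde\Lambda \sse (S^{2n-1}, \xi_-)$ that is smoothly unknotted, since it bounds a smooth ball obtained by closing off one half-page along $Z$. Next I would analyze the local contact-topological model of this gluing. Traversing from $D_+$ to $D_-$ through half of the monodromy region picks up a factor of $\tau^{-1}$, and the dictionary of Subsection \ref{ssec:cone/cusp} between Polterovich surgery at a point of $\widetilde\Lambda \cap Z$ and the cusp/cone alternative on the Legendrian side shows that the \emph{negative} sign of the Dehn twist yields a cusp, i.e. a genuine Legendrian stabilization of $\widetilde\Lambda$, together with the standard Weinstein neighborhood that serves as a loose chart.

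To conclude, I would argue that $\widetilde\Lambda$ is Legendrian isotopic to the standard unknot $\Lambda_0$. Since $\widetilde\Lambda$ sits smoothly inside a topological ball and is formally trivial in that ball (its stabilized front can be contracted), and since the standard unknot is the unique Legendrian representative in a Darboux chart up to formal homotopy in its isotopy class, one can perform the isotopy inside a fixed neighborhood containing the stabilization. Because looseness is preserved under ambient Legendrian isotopy, the loose chart for $\widetilde\Lambda$ transports to a loose chart for $\Lambda_0$.

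The main obstacle will be the second step: one must verify with care that the local model of Polterovich surgery across the \emph{negative} monodromy really produces a stabilizing cusp rather than an opposite-sign feature that could be cancelled. This rests on the precise sign conventions in the definition of $\tau$, on the computation of the Lagrangian cobordism interpolating $D_+$ and $\tau^{-1}(D_-)$ near their intersection with $Z$, and on identifying this cobordism with the standard stabilization model in Subsection \ref{ssec:cone/cusp}. By contrast, the identification of $\widetilde\Lambda$ with $\Lambda_0$ is a comparatively direct application of the front-projection calculus, once the loose chart is in hand.
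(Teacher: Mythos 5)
There is a genuine gap, and it sits exactly where you flag the argument as ``comparatively direct'': the identification of your sphere $\widetilde\Lambda$ with the standard unknot $\Lambda_0$. You justify it by asserting that a Legendrian sphere which lies in a smooth ball and is formally trivial there must be Legendrian isotopic to the standard unknot. That principle is false: formal Legendrian isotopy classifies \emph{loose} Legendrians (Theorem \ref{thm: c0 loose} and \cite{loose}), not arbitrary ones, and the standard counterexample is precisely a stabilized unknot in a Darboux ball, which is formally trivial but not Legendrian isotopic to $\Lambda_0$ in a tight ambient manifold. Note that your reasoning never uses the global structure of $(S^{2n-1},\xi_-)$ at this stage, so if it were valid it would equally ``prove'' that the unknot is loose in $\op{OB}(T^*S^{n-1},\tau)=(S^{2n-1},\xi_\std)$, which is false. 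In other words, the step is circular: knowing that the loose sphere $\widetilde\Lambda$ is formally isotopic to $\Lambda_0$ gives no information about $\Lambda_0$ unless one already knows $\Lambda_0$ is loose, which is the statement being proved. The paper replaces this soft step by a hard one: after positively stabilizing the open book along a cotangent fiber, the unknot is realized as the lift of the new zero section $S$ (Proposition \ref{lem: uk ob}, itself a nontrivial Lefschetz--fibration argument), and the genuine Legendrian isotopy to a loose representative comes from monodromy invariance of lifts together with the identity $(\tau_L^{-1}\circ\tau_S)(S)=\tau_L^{-1}(S)=S+L=\tau_S(L)$ furnished by Theorem \ref{thm:dehn/sur}; looseness of the lift of $\tau_S(L)$ is then Proposition \ref{lem: stab ob}. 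This is where the negative sign of the original monodromy is actually consumed.

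Your first two steps also need repair before the machinery of Subsections \ref{ssec:cone/cusp} and \ref{ssec:looseob} applies. The cusp/cone dictionary (Theorems \ref{thm:dehn/sur} and \ref{thm:sur/sum}) and Proposition \ref{lem: stab ob} are stated for \emph{closed} exact Lagrangians in a single page, lifted by integrating $\lambda|_L$, with the Dehn twist performed along a Lagrangian sphere meeting them in one point; they do not directly govern a configuration of two cotangent--fiber disks placed in different half-pages and closed up through the binding. Moreover, cotangent fibers over antipodal points have distinct boundary Legendrians in the binding $\dd T^*S^{n-1}$, so the proposed gluing ``along the binding'' does not literally close up, and the heuristic that ``traversing half of the monodromy region picks up a factor of $\tau^{-1}$'' would need an honest local model. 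The paper's device for avoiding all of this is to stabilize the open book so that the fiber disk becomes a closed Lagrangian sphere $S$ in the plumbed page $T^*S^{n-1}\cup H$, after which every Legendrian in sight is the lift of a closed exact Lagrangian and the quoted results apply verbatim. If you want to salvage your approach, you should either carry out that stabilization or prove a version of Theorem \ref{thm:sur/sum} for Lagrangian disks with Legendrian boundary on the binding, and in either case replace the formal-uniqueness argument of your last step by an explicit Legendrian isotopy.
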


In light of Theorem \ref{thm:NegStabCS}, Theorem \ref{thm: negOB to loose} implies (3)$\Longleftrightarrow$(6) and thus Theorem \ref{thm:main}. Indeed, the fact that any overtwisted contact manifold admits a negatively stabilized open book follows quickly from known results as we now explain.

Let $(Y, \xi)$ be an overtwisted contact structure, and note that the set of almost contact structures on the sphere forms a group under connected sum \cite[Chapter 4.3]{Ha}. Now the existence $h$--principle \cite[Theorem 1.2]{BEM} implies that there is an overtwisted contact structure $(Y, \eta)$ such that the contact connected sum $(Y \# S^{2n-1}, \eta \# \xi_-)$ is in the same homotopy class of almost contact structures as the given contact manifold $(Y, \xi)$, and since the contact structures $\xi$ and $\eta \# \xi_-$ are both overtwisted, they are necessarily isotopic. Now E.~Giroux's existence Theorem \ref{ob exist} states that the contact structure $(Y, \eta)$ is compatible with an open book $(W,\la,\p)$ and, by using his Theorem \ref{thm:NegStabCS}, the negative stabilization of the open book $(W,\la,\p)$ supports the contact structure $(Y,\eta \# \xi_-)$, which is isotopic to $(Y,\xi)$. This shows the implication (1)$\Rightarrow$(6), and therefore Theorem \ref{thm: negOB to loose} is the main remaining ingredient in order to prove the equivalence (1)$\Longleftrightarrow$(6). Let us then move towards the proof of Theorem \ref{thm: negOB to loose}.

\subsection{Legendrians in open books}
In order to prove Theorem \ref{thm: negOB to loose}, we develop some combinatorics for describing Legendrian submanifolds in adapted open books decompositions.

Let $(Y, \xi) = \op{OB}(W, \lambda, \p)$, and recall that if $L \sse (W,\lambda)$ is an exact Lagrangian, it determines a Legendrian $\Lambda \sse Y$ as noted in Subsection \ref{ssec: OB intro}. The relationship was denoted by the equality $(Y, \xi, \Lambda) = \op{OB}(W, \lambda, \p, L)$, and we emphasize that the Legendrian $\op{OB}(W, \lambda, \p, L)$ is contactomorphic to the Legendrian defined by $\op{OB}(W, \lambda, \psi \circ \p \circ \psi^{-1}, \psi(L))$, and typically distinct from the Legendrian defined by $\op{OB}(W, \lambda, \psi \circ \p \circ \psi^{-1}, L)$. In particular, the Legendrian $\op{OB}(W, \lambda, \p, L)$ is contactomorphic to $\op{OB}(W, \lambda, \p, \p(L))$.

These observations are relevant to the proof and understanding of Theorem \ref{thm: negOB to loose}. The next subsection contains the results expressing Lagrangian surgery on two Lagrangians in terms of Legendrian connected sums of their Legendrian lifts.
% --------------------
\subsection{Lagrangian Surgery and Legendrian Sums} \label{ssec:cone/cusp}

The Dehn--Seidel twists \cite{Se97}[Chapter I.2] along exact Lagrangian spheres are an important class of compactly supported exact symplectomorphisms of a Liouville domain $(W,\lambda)$. Given a contact manifold, an adapted open book decomposition precisely consists of a Liouville domain, the page, and a symplectic monodromy, which oftentimes consists of Dehn--Seidel twists. From this viewpoint, it is relevant for the study of contact topology to reinterpret the action of Dehn twists on Lagrangians in terms of their Legendrian lifts. This is the aim of this subsection.

We focus on the case where $L \sse (W,\lambda)$ is an exact Lagrangian and $S \sse W$ is a Lagrangian sphere transversely intersecting $L$ in one point. In this case, the Dehn twist of $L$ around $S$ can be interpreted as the Polterovich surgery \cite{FOOO,Po} of $L$ and $S$, denoted by $L+S$. The definition and details of the Polterovich surgery will be given momentarily, after Remark \ref{rmk:Reeb} below. For now, we state its relation to Dehn twists:

\begin{thm}[\cite{Se99}]\label{thm:dehn/sur}
The Lagrangian surgery $S+L$ is Lagrangian isotopic to $\tau_S(L)$.\\
The Lagrangian surgery $L+S$ is Lagrangian isotopic to $\tau^{-1}_S(L)$.
\end{thm}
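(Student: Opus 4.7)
The plan is to reduce the theorem to a computation in a standard local model and then exhibit an explicit Lagrangian isotopy there. Both the Dehn--Seidel twist $\tau_S$ and the Polterovich surgery $S+L$ are modifications supported in an arbitrarily small neighborhood of the intersection point $p = L \cap S$. By the Weinstein neighborhood theorem I can identify a neighborhood of $S$ in $(W,\lambda)$ with a neighborhood of the zero section in $(T^*S, \lambda_\std)$, and, after a further compactly supported Hamiltonian isotopy, arrange that near $p$ the Lagrangian $L$ coincides with the cotangent fiber $T^*_p S$. It then suffices to prove the statement in this standard model with $S$ the zero section and $L = T^*_p S$.

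In the model the twist $\tau_S$ is built from the normalized geodesic flow on $T^*S \sm S$, interpolated by a suitable cutoff to the antipodal map on $S$ and to the identity outside a compact neighborhood of $S$. Applying $\tau_S$ to the fiber $T^*_p S$ produces a Lagrangian disk which agrees with $T^*_p S$ outside a compact set and whose central portion is carried by the time-$\pi$ geodesic flow into a neighborhood of the antipodal fiber $T^*_{-p} S$, smoothly connected to the zero section along a handle. Restricting attention to the pencil of geodesics from $p$ to $-p$ reduces the geometry to a two-dimensional slice: a family of Lagrangian curves inside a symplectic $\C$.

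In parallel, the Polterovich surgery $S+L$ at $p$ replaces a neighborhood of $p$ in $S \cup L$ by the standard Lagrangian handle parametrized by a smooth curve $\gamma \sse \C$ interpolating the positive $x$-axis (the trace of $S$) with the positive $y$-axis (the trace of $L$) in the symplectic slice; the opposite surgery $L+S$ uses a curve on the opposite side of the origin. The key claim, which I would verify by a direct comparison in the slice, is that the images of $L$ under the time-$t$ flow defining $\tau_S$ sweep out exactly such a family of curves, and that the curve obtained at $t=\pi$ is Lagrangian isotopic rel endpoints to $\gamma$. This yields $\tau_S(L) \simeq S+L$, and reversing the direction of the geodesic flow gives $\tau_S^{-1}(L) \simeq L+S$.

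The main obstacle I anticipate is the bookkeeping of orientation conventions, which is precisely what distinguishes $S+L$ from $L+S$ and $\tau_S$ from $\tau_S^{-1}$: fixing the parametrization of $S$, the sign of the normalized geodesic flow, and the ordering convention of Polterovich surgery must all be done consistently so that the two statements of the theorem come out with matching signs. A secondary technical point is checking that the intermediate submanifolds in the one-parameter family remain Lagrangian throughout; this follows automatically from the fact that the geodesic flow is Hamiltonian, so the family is in fact Hamiltonian isotopic. Once these conventions are pinned down, the proof reduces to the explicit two-dimensional picture carried out by Seidel in \cite{Se99}.
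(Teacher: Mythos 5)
The paper does not actually prove this statement: it is imported verbatim from Seidel \cite{Se99}, and the paper's own computational work begins only with Theorem \ref{thm:sur/sum} (the translation of the handles $\Gamma^\pm$ into cusp/cone sums of Legendrian lifts). So there is no internal proof to compare against; what you have written is an outline of the standard argument from \cite{Se99} itself — reduce to the Weinstein model $T^*S$ with $L$ a cotangent fiber, describe $\tau_S$ by the normalized geodesic flow, and compare with the Polterovich handle in a two-dimensional slice.

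As a proof, though, the outline has real gaps. First, the opening premise is wrong: the Dehn twist is \emph{not} supported near the intersection point $p$, but in a whole neighborhood of $S$, and $\tau_S(L)$ differs from $L$ along a region sweeping over the entire sphere — indeed $\tau_S(L)$ contains an almost-complete parallel copy of $S$, which is precisely why it can be isotopic to $S+L$. The correct reduction is that both $\tau_S(L)$ and $S+L$ coincide with $L$ outside a Weinstein neighborhood of $S$, so one must build a Lagrangian isotopy supported in that neighborhood rel its boundary; your later steps implicitly do this, but the stated locality should be repaired. Second, the ``key claim'' — that the twisted fiber agrees slice by slice with the surgery handle and that the slice isotopies assemble — is the entire content of the theorem and is only asserted. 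A family of curves in the two-dimensional slices does not automatically sweep out a Lagrangian, nor does a slicewise isotopy automatically give a Lagrangian isotopy of the total handle: one must carry out the comparison $O(n-1)$-equivariantly (equivalently via an invariant Hamiltonian or generating function), which is exactly what the explicit parametrizations $z(t)=\frac12(\mu^3\mp\mu)$ of $\Gamma^\pm$ accomplish in the paper's proof of Theorem \ref{thm:sur/sum}, and what Seidel's computation does in \cite{Se99}. Finally, the sign bookkeeping you postpone is not cosmetic: $S+L$ and $L+S$ are in general not Lagrangian isotopic to each other (the handle sits on opposite sides), so until the conventions for the geodesic flow direction and the surgery ordering are fixed, your argument does not actually distinguish the two assertions of the theorem.
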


\begin{figure}[h!]
\centering
\begin{subfigure}{.5\textwidth}
  \centering
  \includegraphics[scale=0.5]{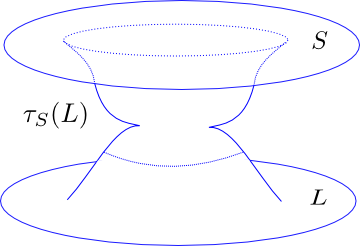}
  \caption{The Legendrian lift of $\tau_S(L)$.}
\end{subfigure}%
\begin{subfigure}{.5\textwidth}
  \centering
  \includegraphics[scale=0.5]{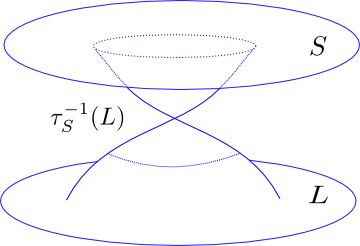}
  \caption{The Legendrian lift of $\tau^{-1}_S(L)$.}
\end{subfigure}
\caption{The statement of Theorem \ref{thm:sur/sum}.}
\label{fig:cuspcone}
\end{figure}

We now model this operation in terms of the fronts of Legendrian lifts $\Lambda$ and $\Sigma$ of the exact Lagrangians $L$ and $S$. The main technical result in this section is the following theorem.

\begin{thm}\label{thm:sur/sum}
Let $L,S\subseteq(W,\lambda)$ be two exact Lagrangians transversely intersecting at a point $p=L\cap S$, and consider the contactization $(Y,\xi)=(W\times\R(z),\ker\{dz-\la\})$ of $(W,\la)$.

There exists a Darboux chart in $(Y,\xi)$ centered at $p\in(W,\lambda)$ such that the front projection of the Legendrian lift of $S+L$ is as depicted in Figure \ref{fig:cuspcone}.A.

There exists a Darboux chart in $(Y,\xi)$ centered at $p\in(W,\lambda)$ such that the front projection of the Legendrian lift of $L+S$ is as depicted in Figure \ref{fig:cuspcone}.B.
\end{thm}

\begin{remark}\label{rmk:Reeb}
Figure \ref{fig:cuspcone} depicts the following situation. The lower horizontal sheet is the lift of a Lagrangian disk $D_L$ contained in the exact Lagrangian $L$ centered at $p$, whereas the upper horizontal sheet is the lift of a Lagrangian disk $D_S$ contained is $S$ also centered at $p$.

Note that there exists a unique Reeb chord connecting the Legendrian lifts of the Lagrangians disks $D_L$ and $D_S$, corresponding to the intersection point $p=L\cap S$ in the Lagrangian projection. Then Figures \ref{fig:cuspcone}.A and B are obtained by respectively substituting this unique local Reeb chord by either a rotationally symmetric cusp or the rotationally symmetric cone. The Legendrian isotopy class of the fronts in Figure \ref{fig:cuspcone}.A and \ref{fig:cuspcone}.B are respectively referred to as the cusp-sum and cone-sum, or the cusp and the cone, of $\Lambda$ and $\Sigma$ along the Reeb chord over the intersection point $p=L\cap S$.\hfill$\Box$
\end{remark}

Let us now review L.~Polterovich's Lagrangian surgery \cite{Po} and prove Theorem \ref{thm:sur/sum}.

Consider local coordinates $(q_1,\ldots,q_{n-1},p_1,\ldots,p_{n-1})\in\R^{2n-2}$ such that the Lagrangians $L$ and $S$ are locally expressed as $L=\{p_1=0,\ldots,p_{n-1}=0\}$, $S=\{q_1=p_1,\ldots,q_{n-1}=p_{n-1}\}$ and the Liouville form reads
$$\displaystyle\lambda_\std=\sum_{i=1}^{n-1}p_idq_i.$$

The Lagrangian surgeries $S+L$ and $L+S$ are respectively described in terms of two Lagrangian handles $\Gamma^\pm$ \cite{Po}. These Lagrangian handles are depicted in Figure \ref{fig:LagHand3}, and in order to parametrize them we use coordinates $t=(t_1,\ldots,t_{n-1})\in\R^{n-1}$.\\

\begin{figure}[h!]
\centering
\begin{subfigure}{.5\textwidth}
  \centering
  \includegraphics[scale=0.35]{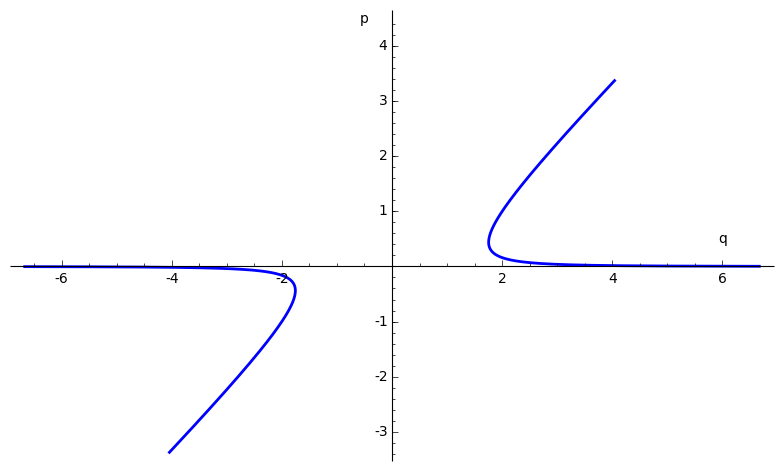}
  \caption{The Lagrangian handle $\Gamma^+$.}
\end{subfigure}%
\begin{subfigure}{.5\textwidth}
  \centering
  \includegraphics[scale=0.35]{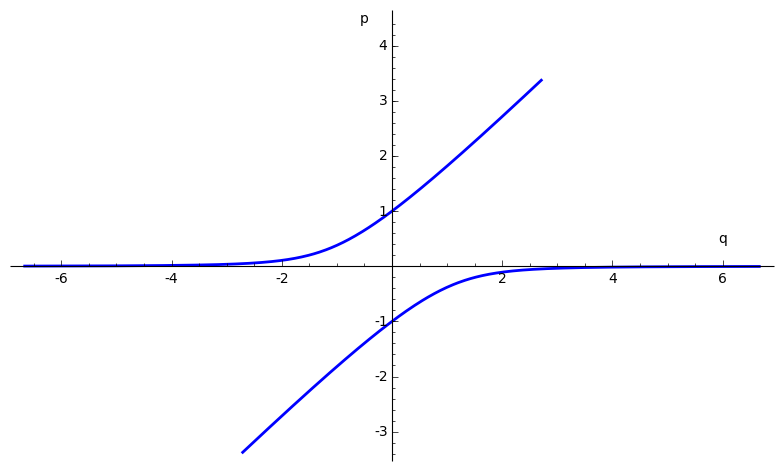}
  \caption{The Lagrangian handle $\Gamma^-$.}
\end{subfigure}
\caption{The Lagrangian handles $\Gamma^{\pm}\subseteq\R^{2n-2}(q,p)$.}
\label{fig:LagHand3}
\end{figure}

First, we consider the case of the positive Lagrangian handle $\Gamma^+$; it can be described via the parametrization $\Gamma^+:\R^{n-1}\setminus\{0\}\longrightarrow\R^{2n-2}$
defined as
$$\Gamma^+(t_1,\ldots,t_{n-1})=\left((\mu+\mu^{-1})t_1,\ldots,(\mu+\mu^{-1})t_{n-1},\mu t_1,\ldots,\mu t_{n-1}\right)\mbox{ where }\displaystyle\mu=\sum_{i=1}^{n-1}t_i^2.$$

Note that we have the two asymptotics $\displaystyle\lim_{\mu\to\infty}\Gamma^+\subseteq S$ and $\displaystyle\lim_{\mu\to0}\Gamma^+\subseteq L$.
By definition, the Polterovich surgery $S+L$ is obtained by gluing the above positive Lagrangian handle $\Gamma^+$ to the Lagrangian $L$ at the limit $\mu=0$, and to the Lagrangian $S$ at the limit $\mu=\infty$.

Analogously, the Polterovich surgery $L+S$ is obtained by using the negative Lagrangian handle $\Gamma^-:\R^{n-1}\setminus\{0\}\longrightarrow\R^{2n-2}$ parametrized by
$$\Gamma^-(t_1,\ldots,t_{n-1})=\left((\mu-\mu^{-1})t_1,\ldots,(\mu-\mu^{-1})t_{n-1},\mu t_1,\ldots,\mu t_{n-1}\right).$$
This parametrization satisfies the asymptotics $\displaystyle\lim_{\mu\to\infty}\Gamma^-\subseteq S$ and $\displaystyle\lim_{\mu\to0}\Gamma^-\subseteq L$, and can be glued to $L$ and $S$ in the asymptotic limits, thus constructing the Lagrangian $L+S$.

\begin{remark}
The Lagrangian handles $\Gamma^\pm$ can be parametrized to be not only asymptotic to $L$ and $S$ but actually coincide with them in the local model. This is a matter of introducing the appropriate cut--off functions, and the Lagrangian isotopy type of the construction remains unchanged.\hfill$\Box$
\end{remark}

\begin{proof}[Proof of Theorem \ref{thm:sur/sum}] In the contactization $(\R^{2n-1}(q,p;z),\ker(dz-\lambda_\std))$ of the standard exact Weinstein manifold $(\R^{2n-2}(q,p),\lambda_\std)$, the Lagrangian $L$ described above lifts to the Legendrian
$$\Lambda=\{(q_1,\ldots,q_{n-1},0,\ldots,0;0)\}$$
and the Lagrangian $S$ lifts to the Legendrian
$$\displaystyle\Sigma=\{(q_1,\ldots,q_{n-1},q_1,\ldots,q_{n-1};(q_1^2+\ldots+q_{n-1}^2)/2)\}.$$

We can lift the exact Lagrangian $\Gamma^+$ to the contactization via $z=z(t_1,\ldots,t_{n-1})$:
$$dz(t)=\sum_{i=1}^{n-1}\mu t_i d\left((\mu+\mu^{-1})t_i\right)=\sum_{i=1}^{n-1}(\mu^2+1)t_idt_i+\sum_{i=1}^{n-1}\mu t^2_i(1-\mu^{-2})d\mu=$$
$$=\sum_{i=1}^{n-1}(\mu^2+1)t_idt_i+(\mu^2-1)d\mu$$
Hence the partial derivatives of $z(t)$ are:
$$\frac{\partial z(t)}{\partial t_i}=(\mu^2+1)t_i+(\mu^2-1)2t_i=(3\mu^2-1)t_i.$$
Thus the $z$--coordinate of the lift is parametrized by $z(t)=\frac{1}{2}(\mu^3-\mu)$ and in the front projection $\R^{n}(q_1,\ldots,q_{n-1},z)$ we obtain a rotationally symmetric cusp. Part of the front projections in dimensions 3 and 5 are depicted in Figures \ref{fig:cusp3} and \ref{fig:cusp5}.
\begin{center}
\begin{figure}[h!]
\includegraphics[scale=0.5]{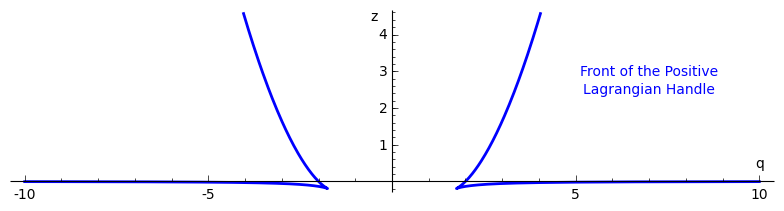}
\caption{Front projection to $\R^2(q_1,z)$ of the Legendrian lift of the positive Lagrangian handle $\Gamma^+\subseteq\R^3(q_1,p_1,z)$ for $t\in[-1.5,-0.1]\cup[0.1,1.5]$.}
\label{fig:cusp3}
\end{figure}
\end{center}

\begin{center}
\begin{figure}[h!]
\includegraphics[scale=0.5]{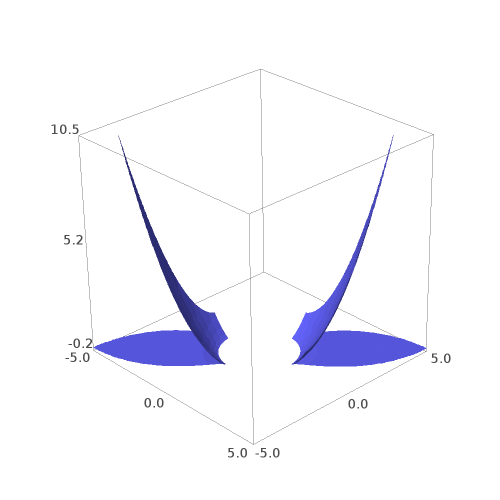}
\caption{Front projection to $\R^3(q_1,q_2,z)$ of the Legendrian lift of $\Gamma^+\subseteq\R^5$ with $(t_1,t_2)$ in the range $[-1.2,-0.1]\times[-1.2,-0.1]\cup[0.1,1.2]\times[0.1,1.2]$.}
\label{fig:cusp5}
\end{figure}
\end{center}
This describes the Polterovich surgery $S+L$ in terms of the cusp-sum of the two Legendrians $\Lambda$ and $\Sigma$ respectively lifting $L$ and $S$, and concludes the first statement of Theorem \ref{thm:sur/sum}.

Regarding the Legendrian lift of the Polterovich surgery $L+S$, the $z$--coordinate of the lift to the contactization satisfies
$$dz(t)=\sum_{i=1}^{n-1}\mu t_i d\left((\mu-\mu^{-1})t_i\right)=\sum_{i=1}^{n-1}(\mu^2-1)t_idt_i+(\mu^2+1)d\mu.$$
Thus we conclude that the partial derivatives of $z(t)$ are given by
$$\frac{\partial z(t)}{\partial t_i}=(3\mu^2+1)t_idt_i$$
and $z(t)=\frac{1}{2}(\mu^3+\mu)$ provides a lift for $\Gamma^-$. The front projection is depicted in Figures \ref{fig:cone3} and \ref{fig:cone5} in the 3--dimensional and 5--dimensional cases. 
\begin{center}
\begin{figure}[h!]
\includegraphics[scale=0.5]{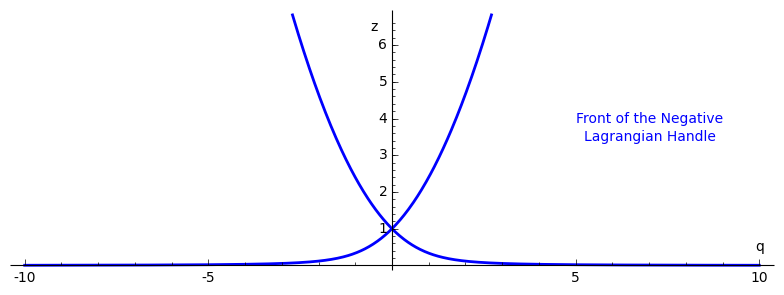}
\caption{Front projection to $\R^2(q_1,z)$ of the Legendrian lift of the handle $\Gamma^-\subseteq\R^3(q_1,p_1,z)$ with $t\in[-1.5,-0.1]\cup[0.1,1.5]$.}
\label{fig:cone3}
\end{figure}
\end{center}

\begin{center}
\begin{figure}[h!]
\includegraphics[scale=0.5]{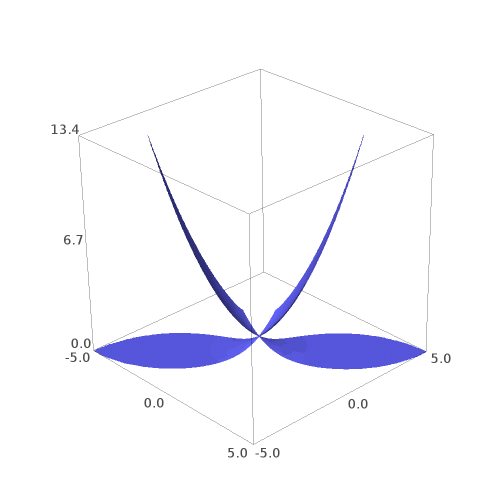}
\caption{Front projection to $\R^3(q_1,q_2,z)$ of the Legendrian lift of $\Gamma^-\subseteq\R^5$ with parameters $(t_1,t_2)\in[-1.2,-0.1]\times[-1.2,-0.1]\cup[0.1,1.2]\times[0.1,1.2]$.}
\label{fig:cone5}
\end{figure}
\end{center}
This concludes the second statement of Theorem \ref{thm:sur/sum}.
\end{proof}
% --------------------
\subsection{Loose Legendrians in open books}\label{ssec:looseob} In order to show that the Legendrian unknot in the contact manifold $(S^{2n-1},\xi_-)=\op{OB}(T^*S^{n-1},\tau^{-1})$ is a loose Legendrian submanifold, we need an understanding of looseness and the standard unknot in the open book framework. This is the content of Propositions \ref{lem: uk ob} and \ref{lem: stab ob}, which we use in order to prove Theorem \ref{thm: negOB to loose}.

\begin{prop}\label{lem: uk ob}
Let $(Y,\xi)=\op{OB}(W, \lambda, \p)$ be a contact manifold and $(W \cup H,\lambda, \p \circ \tau_S)$ a positive stabilization. The Legendrian lift of $S$ to $(Y, \xi)$ is the standard unknot.
\end{prop}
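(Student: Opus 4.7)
The plan is to identify the Legendrian lift of $S$ explicitly in a local model, and then use the locality of positive stabilization to transport the identification to the general case. In other words, I want to show that $S$, together with the stabilization data, sits inside a standard Darboux neighborhood of $(Y,\xi)$ in which the lift is visibly the standard unknot.

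First I would verify the model case $\op{OB}(T^*S^{n-1},\tau)$. The standard contact sphere $(S^{2n-1}, \xi_\std) \sse \C^n$ carries an open book from the Milnor fibration of $f(z_1,\ldots,z_n) = z_1^2 + \cdots + z_n^2$: the map $\arg f: S^{2n-1}\sm f^{-1}(0) \to S^1$ defines the fibration, the pages are symplectomorphic to $(T^*S^{n-1},\lambda_\std)$, and the monodromy is the Dehn--Seidel twist $\tau$ along the zero section. Writing $z_j = x_j + iy_j$, the page $\{\arg f = 0\} \cap S^{2n-1}$ is cut out by $\sum x_j y_j = 0$ and $\sum(x_j^2 - y_j^2) > 0$ inside $\{|z|=1\}$, and the zero section of $T^*S^{n-1}$ corresponds exactly to the locus $\{y_1 = \cdots = y_n = 0\} \cap S^{2n-1}$. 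Since $\lambda_\std$ vanishes along the zero section, its Legendrian lift to the open book is this real sphere, which is by definition $\Lambda_0$. This settles the model case.

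Next I would show that a positive stabilization is a local operation. The handle $H$ is attached in a Weinstein neighborhood of $\dd D \sse \dd W$, and the Dehn--Seidel twist $\tau_S$ is compactly supported in a Weinstein neighborhood $U \sse W \cup H$ of $S$. Since conjugating the monodromy does not change the compatible contact structure or the isotopy class of a Legendrian lift, I may assume $\p|_U = \op{id}$; this requires only a compactly supported Hamiltonian isotopy away from $S$. With $\p$ trivial on $U$, the stabilization data $(U,\tau_S)$ is a self-contained open book piece, and since $U$ is a Weinstein neighborhood of the Lagrangian sphere $S = D \cup (\text{core of }H)$, it is Weinstein-equivalent to a neighborhood of the zero section in $T^*S^{n-1}$, with the model monodromy $\tau$. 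Consequently, the contact piece contributed by this local open book embeds as an open Darboux-type region in the model contact manifold $\op{OB}(T^*S^{n-1},\tau) = (S^{2n-1},\xi_\std)$, and under this embedding $S$ is identified with the zero section.

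Finally, combining these observations: the positive stabilization is contactomorphic to $(Y,\xi)$ by Theorem \ref{thm:NegStabCS}, and the entire Lagrangian sphere $S$ lies in the local region identified above. Its Legendrian lift $\Lambda$ therefore lies in a Darboux ball of $(Y,\xi)$ and agrees with the model lift, which is $\Lambda_0$. Hence $\Lambda$ is the standard Legendrian unknot.

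The main obstacle I anticipate is making the locality step precise: namely, verifying that after the conjugation arranging $\p|_U = \op{id}$, the local Weinstein model $(U,\tau_S)$ faithfully realizes the Milnor open book $\op{OB}(T^*S^{n-1},\tau)$ on a contact neighborhood of $\Lambda$ inside $(Y,\xi)$, so that the Legendrian lift depends only on the local model and not on the ambient monodromy $\p$. This amounts to choosing the Weinstein neighborhood of $D$ compatibly with the open book construction and checking that the resulting contact gluing matches a Darboux embedding.
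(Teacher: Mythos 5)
Your model-case computation is fine and in fact close to what the paper does: the paper also realizes $\op{OB}(T^*S^{n-1},\lambda_\std,\tau_S)$ as the boundary of the Lefschetz fibration $f(z)=z_1^2+\cdots+z_n^2$, the only difference being that it treats $S$ only up to Hamiltonian isotopy and therefore identifies the lift with the unknot indirectly, by building a Lagrangian filling (the Lefschetz thimble obtained from symplectic parallel transport along a path from the critical value) and showing via the Liouville flow and Moser's theorem that this filling is Hamiltonian isotopic to the flat Lagrangian plane $\{y_i=0\}$, whose boundary is $\Lambda_0$ by definition. The genuine gap in your write-up is the reduction step, namely the assertion that you ``may assume $\p|_U=\op{id}$'' on a Weinstein neighborhood $U$ of $S$ after conjugation and a compactly supported Hamiltonian isotopy away from $S$. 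Conjugation does not help: $\op{OB}(W,\lambda,\psi\circ\p\circ\psi^{-1},\psi(S))$ forces you to replace $S$ by $\psi(S)$, so the relative position of $S$ and the support of $\p$ is unchanged. And replacing $\p$ by a compactly supported isotopic symplectomorphism $\p'$ with $\p'|_U=\op{id}$ would in particular force $\p(S)$ to be Lagrangian (hence Hamiltonian) isotopic to $S$, which is false in general. This failure occurs precisely in the case the paper needs: in Section 5.3 one takes $W\cup H$ the plumbing of two copies of $T^*S^{n-1}$, $\p=\tau_L^{-1}$, and $S$ the sphere whose lower hemisphere is a cotangent fiber crossing the support of $\tau_L^{-1}$; there $\tau_L^{-1}(S)=S+L$ is not isotopic to $S$, so no allowed modification makes $\p$ trivial near $S$. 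Since being the standard unknot is not a property of a germ near the Legendrian, your argument really does need a genuine ambient statement here, and the unjustified triviality of $\p$ near $S$ is exactly what it is missing.

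The paper avoids this by quoting Giroux's description of positive stabilization as a contact connected sum (Theorem \ref{thm:NegStabCS}): the stabilization is $(Y\# S^{2n-1},\xi\#\xi_\std)$ with $(S^{2n-1},\xi_\std)=\op{OB}(T^*S^{n-1},\lambda_\std,\tau_S)$, and the sphere $S$ is carried into the standard sphere summand regardless of how $D$ sits relative to the support of $\p$; this global splitting is what legitimately reduces the proposition to the model case. If you want to keep your more hands-on route, you would need to prove an analogue of this splitting (for instance, an isotopy statement for the pair consisting of the contact structure and the lift of $S$ that removes the interaction of $D$ with $\p$), which is essentially the content of the cited theorem rather than a consequence of a local normalization of the monodromy.
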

Proposition \ref{lem: uk ob} can be deduced from the theory of Lagrangian vanishing cycles \cite[Chapter III]{Se08} and their Lagrangian vanishing thimbles.

\begin{prop} \label{lem: stab ob}
Let $(W \cup H, \lambda, \p \circ \tau_S)$ be a positively stabilized open book and $L\sse W$ an exact Lagrangian which transversely intersects $S$ in one point. Then the Legendrian $(W \cup H, \lambda, \p \circ \tau_S, L)$ is contactomorphic to the Legendrian $(W \cup H, \lambda, \p \circ \tau_S, \tau^{-1}_S(L))$ and the Legendrian $(W \cup H, \lambda, \p \circ \tau_S, \tau_S(L))$ is loose.
\end{prop}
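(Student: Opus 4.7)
The plan is to combine Theorems \ref{thm:dehn/sur} and \ref{thm:sur/sum} with Proposition \ref{lem: uk ob} in order to recast both the Legendrian lifts $\op{OB}(W \cup H, \lambda, \p \circ \tau_S, \tau_S^{-1}(L))$ and $\op{OB}(W \cup H, \lambda, \p \circ \tau_S, \tau_S(L))$ as Legendrian connect sums of $\Lambda$, the lift of $L$, with the standard Legendrian unknot, and then to analyze those two connect sums individually.

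By Theorem \ref{thm:dehn/sur}, $\tau_S^{-1}(L)$ is Lagrangian isotopic through exact Lagrangians to the Polterovich sum $L + S$, and $\tau_S(L)$ is Lagrangian isotopic to $S + L$. An exact Lagrangian isotopy in a page induces a Legendrian isotopy of the associated Legendrian lifts, so it suffices to analyze the lifts of $L+S$ and $S+L$. By Theorem \ref{thm:sur/sum}, these are respectively the Legendrian cone-sum and cusp-sum of $\Lambda$ with $\Sigma$, the lift of $S$. By Proposition \ref{lem: uk ob}, $\Sigma$ is Legendrian isotopic to the standard Legendrian unknot $\Lambda_0$, sitting in a small Darboux ball.

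For the first claim I would argue that the Legendrian cone-sum of $\Lambda$ with $\Lambda_0$ is Legendrian isotopic to $\Lambda$. Because $\Lambda_0$ bounds the flat Lagrangian disk constructed in the proof of Proposition \ref{lem: uk ob}, a compactly supported contact isotopy can shrink $\Lambda_0$ along this disk. As its radius tends to zero the cone singularity modeled by $\Gamma^-$ of Figure \ref{fig:cone3} collapses to a point, the smoothed cone-sum converges $C^0$ to $\Lambda$, and the ambient contact isotopy produces the desired Legendrian isotopy. For the second claim I would exhibit a loose chart inside the Legendrian cusp-sum of $\Lambda$ with $\Lambda_0$. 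From the explicit front of the cusp-sum in Figures \ref{fig:cusp3} and \ref{fig:cusp5}, near the intersection point the front of $\Lambda$ is modified by a rotationally symmetric cusp whose two-dimensional slices reproduce the stabilized Legendrian arc of Figure \ref{fig: stab}. Choosing coordinates near the cusp so that one distinguished $3$-dimensional slice carries the stabilization and the remaining $n-2$ directions parametrize a piece of $\Lambda$, the resulting neighborhood is contactomorphic to the loose chart $(B^3 \times \Op(Z), \Lambda_0 \times Z)$ appearing in the definition of looseness.

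The main obstacle is the rigorous identification of the loose chart inside the cusp-sum. One has to factor the local contact structure near the cusp as a product of a $3$-dimensional Darboux ball containing the stabilized arc and a Darboux neighborhood of a piece of $\Lambda$, playing the role of a zero section in $T^*\R^{n-2}$, and verify that the cusp produced by the handle $\Gamma^+$ lies entirely in the first factor. This amounts to manipulating the local coordinates of Section \ref{ssec:cone/cusp} to respect the product decomposition; once that is in place, the loose chart is immediate and both conclusions follow.
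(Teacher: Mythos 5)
Your overall route is the same as the paper's: identify $\tau_S^{\pm 1}(L)$ with the Polterovich sums $S+L$ and $L+S$ via Theorem \ref{thm:dehn/sur}, pass to Legendrian lifts via Theorem \ref{thm:sur/sum} to get the cusp- and cone-sums with the lift $\Sigma$ of $S$, and use Proposition \ref{lem: uk ob} to see that $\Sigma$ is a standard unknot in a Darboux ball, so both operations are local modifications of $\Lambda$. Your treatment of the cusp-sum is also in line with the paper: the spun cusp is the $S^{n-2}$-spinning of the stabilized arc of Figure \ref{fig: stab}, which is exactly a loose chart (the paper simply cites \cite{CE,loose} for this), and the coordinate bookkeeping you flag as the ``main obstacle'' is the routine verification behind that citation.

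However, your justification of the first claim has a genuine gap. You argue that shrinking $\Lambda_0$ along its bounding Lagrangian disk makes the cone-sum converge $C^0$ to $\Lambda$, and that ``the ambient contact isotopy produces the desired Legendrian isotopy.'' A compactly supported contact isotopy applied to the cone-sum only yields Legendrians isotopic to the cone-sum itself; it never produces $\Lambda$, and $C^0$-closeness to $\Lambda$ does not determine the Legendrian isotopy class. Indeed, the cusp-sum with a shrinking unknot also converges $C^0$ to $\Lambda$, yet it is the (spun) stabilization and is \emph{not} isotopic to $\Lambda$ --- that is precisely the point of the second claim. So the shrinking argument cannot distinguish the cone-sum from the cusp-sum, which is the heart of the proposition. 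What is actually true, and what the paper uses, is that cone-summing with a small unknot is the $S^{n-2}$-spinning of the first Legendrian Reidemeister move on the front (visible in Figure \ref{fig:cone3}): the cone produces no new zig-zag, and the move is realized by a Legendrian isotopy supported near the Darboux ball. You should replace the $C^0$-limit argument by this front-projection identification (or an equivalent explicit local isotopy of fronts); with that correction the rest of your argument goes through and coincides with the paper's proof.
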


\begin{proof}
Choose a Legendrian lift $\Lambda$ for the Lagrangian $L$ which has angle $\theta = 0$ at the intersection point $L \cap S$, and a Legendrian lift for $S$ with angle $\theta = \e$ for a small constant $\e\in\R^+$. Theorem \ref{thm:sur/sum} implies that the Legendrian lifts of $\tau_S(L)$ and $\tau_S^{-1}(L)$ are represented by the cusp and cone sums Legendrian fronts. Indeed, since they intersect in one point, we know by Theorem \ref{thm:dehn/sur} that $\tau_S(L) = S+L$ and $\tau_S^{-1}L = L+S$. Then by Theorem \ref{thm:sur/sum} the Legendrian lift of $S+L$ corresponds to the cusp-sum, and the Legendrian lift of $L+S$ corresponds to their cone-sum. Note that the Legendrian lift of $S$ is the Legendrian unknot contained in a Darboux ball which is disjoint from the Legendrian $\Lambda$, and since any two Darboux balls are contact isotopic we have that cone or cusp summing with the unknot is a local operation on the Legendrian $\Lambda$. Let us now discuss the two cases.

For the Legendrian $\op{OB}(W \cup H, \lambda, \p \circ \tau_S, \tau^{-1}_S(L))$, we note that cone-summing a Legendrian with a small Legendrian unknot does not change the Legendrian isotopy type since this is just the $S^{n-2}$--spinning of the first Legendrian Reidemeister move. Therefore Legendrian $\Lambda$ is Legendrian isotopic to the Legendrian lift of the exact Lagrangian $L+S = \tau^{-1}_S(L)$.

In contrast, the situation is different for the Legendrian $\op{OB}(W \cup H, \lambda, \p \circ \tau_S, \tau_S(L))$. Indeed, observe that the cusp-sum of a Legendrian submanifold with a small Legendrian unknot explicitly creates a loose chart \cite{CE,loose} and therefore the Legendrian lift of the exact Lagrangian $\tau_S(L) = S+L$ is actually a loose Legendrian.
\end{proof}

Propositions \ref{lem: uk ob} and \ref{lem: stab ob} are the ingredients needed to prove Theorem \ref{thm: negOB to loose}.

% --------------------
\subsection{Proof of Theorem \ref{thm: negOB to loose}}\label{ssec:1=5} Consider the contact manifold
$$(S^{2n-1},\xi_-)=\op{OB}(T^*L,\la_\std;\tau_L^{-1})$$
obtained by negatively stabilizing the contact open book $(S^{2n-1},\xi_\st)=\op{OB}(D^{2n-2},\la_\st;\mbox{id})$, where we have denoted $L\cong S^{n-1}$ for the zero section of the stabilized Weinstein page. Let us choose a cotangent fiber in the Weinstein page $(T^*L,\la_\std)$ and positively stabilize the compatible open book above along this cotangent fiber. The Weinstein page $(W, \lambda) = T^*S^{n-1} \cup H$ of the resulting open book is a plumbing of two copies of the Weinstein structure $(T^*S^{n-1},\la_\std)$ whose exact Lagrangian zero sections $L$ and $S$ intersect in one point.

First, the Legendrian $\Lambda_0 = \op{OB}(W, \lambda,\tau_L^{-1}\circ\tau_S, S)$ is the standard Legendrian unknot by Proposition \ref{lem: uk ob}. And second, the Legendrian submanifold $\Lambda_\ell = \op{OB}(W, \lambda, \tau_L^{-1}\circ\tau_S, \tau_S(L))$ is a loose Legendrian by Proposition \ref{lem: stab ob}. In consequence, suffices to show that these two Legendrians are contactomorphic, which follows from the fact that the Legendrian $\Lambda_0$ is contactomorphic to the Legendrian
$$\op{OB}(W, \lambda, \tau_L^{-1}\circ\tau_S, (\tau^{-1}_L\circ\tau_S)(S))$$
and the exact Lagrangian isotopy $(\tau^{-1}_L\circ\tau_S)(S) = \tau^{-1}_L(S) = S+L = \tau_S(L)$.\hfill$\Box$

\section{Proof of Theorem \ref{thm:main}}\label{sec:mainproof} 

In this section we formally prove Theorem \ref{thm:main} using the results in Sections \ref{sec:OTxD2}, \ref{sec: cobord}, \ref{sec: surgery}, and Section \ref{sec:NegStab}. First, the $h$-principle \cite[Theorem 1.2]{BEM} directly gives the implications $(1)\Rightarrow(2)$ and $(1)\Rightarrow(4)$. The implication $(1)\Rightarrow(3)$ also follows directly from \cite[Theorem 1.2]{BEM}, or alternatively using \cite[Theorem 1.1]{MNPS}, which states $(4)\Rightarrow(3)$. The same $h$-principle \cite[Theorem 1.2]{BEM} gives the implication $(1)\Rightarrow(6)$, as explained in Section \ref{sec:NegStab} right after the statement of Theorem \ref{thm: negOB to loose}. Finally, the implication $(1)\Rightarrow(5)$ follows from the implication $(6)\Rightarrow(5)$, which itself follows from the relation between Dehn twists in the symplectic monodromy of an adapted open book and contact surgeries, see for instance \cite[Theorem 4.4]{Ko} and \cite[Section 3]{CM2}.

By the above paragraph, the implications $(1)\Rightarrow(2),(3),(4),(5),(6)$ hold. Let us now use the results in this article to conclude the converse. Indeed, Theorem \ref{thm:OTxD2} shows $(2)\Rightarrow(1)$. The implication  $(3)\Rightarrow(1)$ is the content of Theorem \ref{thm: loose to OT}. The implication $(4)\Rightarrow(1)$ follows from the now proven implication  $(3)\Rightarrow(1)$ and  $(4)\Rightarrow(3)$, which holds by \cite[Theorem 1.1]{MNPS}. The implication $(5)\Rightarrow(1)$ follows Theorem \ref{thm: down surgery}, which proves $(5)\Rightarrow(4)$ and the implication $(4)\Rightarrow(1)$. Finally, $(6)\Rightarrow(1)$ follows from Theorem \ref{thm: negOB to loose}, which proves $(6)\Rightarrow(3)$, and Theorem \ref{thm: loose to OT}, which shows $(3)\Rightarrow(1)$.\hfill$\Box$

Let us now provide two applications of Theorem \ref{thm:main} to contact topology.

% --------------------
\section{Applications}\label{ssec:cons} 

In this section we explore consequences of Theorem \ref{thm:main}. Subsection \ref{sec:sizes} discusses neighborhoods in contact topology in relation to Theorem \ref{thm:main} and Subsection \ref{ssec:conc} constructs a Weinstein concordance between an overtwisted contact structure on the $(2n-1)$--dimensional sphere and the standard contact structure $(S^{2n-1},\xi_\std)$.

\subsection{Neighborhood size and contact squeezing}\label{sec:sizes} 
Theorem \ref{thm:main} emphasizes in its first equivalence $1=2$ the importance of the size of a neighborhood of a contact submanifold. In this direction it is relevant to understand the dichotomy between tight and overtwisted contact structures in terms of small and large neighborhoods.

\begin{thm} \label{cor: OT squeezing}
Let $(Y, \ker\alpha)$ be an overtwisted contact manifold. There exists a radius $R_0\in\R^+$ such that for any $R > R_0$, there exists a compactly supported contact isotopy
$$f_t:(Y \x \C, \ker\{\alpha + \lambda_\std\})\longrightarrow(Y \x \C, \ker\{\alpha + \lambda_\std\})$$
such that $f_0=\mbox{id}$ and $f_1(Y \x D^2(R)) \sse Y \x D^2(R_0)$.
\end{thm}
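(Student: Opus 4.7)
The plan is to combine Theorem \ref{thm:OTxD2} with the parametric, compactly supported uniqueness $h$-principle for overtwisted contact structures from \cite{BEM}. The underlying idea is that overtwistedness of $Y$ turns the normal $\C$--direction of $Y \x \{0\} \sse Y \x \C$ into a flexible direction, so that a large standard contact neighborhood of $Y \x \{0\}$ can be ambiently squeezed into a small one. This is the mirror image of Theorem \ref{thm:OTxD2}: the size $R_0$ controls where overtwistedness kicks in, and above that threshold the ambient contact geometry becomes insensitive to the radius.

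First I would invoke Theorem \ref{thm:OTxD2} to fix a radius $R_0>0$ such that $(Y\x D^2(R_0),\ker(\alpha+\lambda_\std))$ is overtwisted. After a slight enlargement of $R_0$, I may assume that an overtwisted disk $D^\ot$ sits strictly in the interior of $Y\x D^2(R_0)$. For any $R>R_0$, I would then pick a compactly supported smooth diffeotopy $\wh f_t$ of $Y\x\C$ of product form $\wh f_t(y,z)=(y,\phi_t(z))$, with $\wh f_0=\op{id}$ and $\wh f_1(Y\x D^2(R))$ compactly contained in the interior of $Y\x D^2(R_0)$. Setting $\xi=\ker(\alpha+\lambda_\std)$, the pullback contact structures $\xi_t:=\wh f_t^*\xi$ form a smooth path of contact structures on $Y\x\C$ that agree with $\xi$ outside a fixed compact set $K$, and each $\xi_t$ is overtwisted because $\wh f_t^{-1}(D^\ot)$ is an overtwisted disk of $\xi_t$.

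The key step is to upgrade this formal squeezing to a genuine contact isotopy. I would apply the compactly supported parametric uniqueness $h$-principle of \cite{BEM} to the path $\{\xi_t\}$, which produces a compactly supported diffeotopy $h_t$ of $Y\x\C$ with $h_0=\op{id}$ and $h_t^*\xi=\xi_t$. The composition $f_t:=\wh f_t\circ h_t^{-1}$ is then a compactly supported contact isotopy of $(Y\x\C,\xi)$, since $(f_t)^*\xi=h_t^{-*}\wh f_t^*\xi=h_t^{-*}\xi_t=\xi$. Using the relative version of the $h$-principle to arrange that $h_t$ be $C^0$--close to the identity on the compact region of interest, and choosing the squeezing margin of $\wh f_1$ larger than this perturbation, one obtains $f_1(Y\x D^2(R))\sse Y\x D^2(R_0)$ as desired.

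The main obstacle is to secure the hypotheses of the $h$-principle, namely the overtwistedness of every intermediate $\xi_t$ and sufficient $C^0$ control on $h_t$ so that the composition $\wh f_1\circ h_1^{-1}$ retains the squeezing property of $\wh f_1$. The first point is handled by the fixed interior overtwisted disk $D^\ot$, which is carried by the smooth family $\wh f_t^{-1}$ to an overtwisted disk for each $\xi_t$. The second is arranged by applying the relative form of the $h$-principle with the initial structure prescribed on a neighborhood where $\wh f_t^*\xi=\xi$, shrinking the effective support of the Gray-type diffeotopy $h_t$. Non-compactness of $Y\x\C$ causes no issue because the family $\{\xi_t\}$ is constant outside $K$, so the whole construction takes place in the compactly supported, relative setting of \cite{BEM}.
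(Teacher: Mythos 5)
The decisive step in your argument --- ``arrange that $h_t$ be $C^0$--close to the identity'' --- is not available, and without it the construction fails. Your family $\xi_t=\wh f_t^{\,*}\xi$ consists of \emph{genuine} contact structures which are constant outside a compact set, so the existence of a compactly supported isotopy $h_t$ with $h_t^*\xi=\xi_t$ is simply Gray stability \cite{Gr}; the uniqueness $h$--principle of \cite{BEM} adds nothing at this point, and neither Gray's theorem nor any statement in \cite{BEM} gives $C^0$--control on $h_t$. The composition $f_1=\wh f_1\circ h_1^{-1}$ is indeed a compactly supported contactomorphism, but $f_1(Y\x D^2(R))=\wh f_1\bigl(h_1^{-1}(Y\x D^2(R))\bigr)$, and $h_1^{-1}$ may move $Y\x D^2(R)$ anywhere inside the (large) region where $\xi_t\neq\xi$, i.e.\ inside the support of the smooth squeezing $\wh f_t$; there is no reason for the image to land in $Y\x D^2(R_0)$. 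The gap is not cosmetic: since your scheme never uses overtwistedness in an essential way (the overtwisted disks $\wh f_t^{-1}(D^\ot)$ only serve to invoke a theorem that Gray's theorem already supersedes), if the $C^0$--smallness claim were correct the identical argument would squeeze $Y\x D^2(R)$ into $Y\x D^2(R_0)$ for an arbitrary --- e.g.\ fillable --- $(Y,\alpha)$, contradicting Proposition \ref{prop:smallsqueez} and the non--squeezing results of \cite{EKP}. Uncontrolled Gray-type corrections of a smooth squeezing can never detect the tight/overtwisted dichotomy, so this route cannot be repaired by a more careful choice of $\wh f_t$ or of the relative data.

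The paper's proof uses overtwistedness in a different, essential way: by the equivalence $1=2a$ of Theorem \ref{thm:main} (via Theorem \ref{thm:OTxD2}) one fixes $R_c$ with $(Y\x D^2(R_c),\ker\{\alpha+\lambda_\std\})$ overtwisted and sets $R_0=2R_c$, and then applies the $h$--principle for isocontact \emph{embeddings} into overtwisted manifolds from \cite{BEM}: in the complement of an overtwisted disk sitting inside $Y\x D^2(R_0)$, the inclusion of $Y\x D^2(R)$ can be deformed, through contact embeddings realizing the obvious formal isotopy, to a contact embedding with image in $Y\x D^2(R_0)$; the ambient compactly supported contact isotopy is then produced by contact isotopy extension. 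In other words, the flexibility is used to build and deform \emph{embeddings} into the overtwisted target, which is exactly the geometric control your Gray-type correction lacks. If you want to salvage your write-up, this is the statement of \cite{BEM} you need to quote and verify the hypotheses of (formal isotopy class of the scaled inclusion, image disjoint from the overtwisted disk), rather than the uniqueness $h$--principle for contact structures.
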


This follows immediately from the (1)$\Longleftrightarrow$(2) equivalence in Theorem \ref{thm:main} together with the $h$--principle for isocontact embeddings into overtwisted manifolds \cite[Corollary 1.4]{BEM}. Theorem \ref{cor: OT squeezing}, being a contact squeezing result, relates to non--orderability \cite{BEM,CP2,EKP,Gi}. The radius $R_0$ in the statement of Theorem \ref{cor: OT squeezing} can be taken to be any radius greater than the minimal radius $R_{c}$ such that the contact manifold
$(Y \x D^2(R_c),\ker\{\alpha + \lambda_\std\})$ is overtwisted. Thus in Theorem \ref{cor: OT squeezing} we can take $R_0$ to be, for instance, twice $R_c$.

In contrast with Theorem \ref{cor: OT squeezing}, there are instances of contact non--squeezing:

\begin{prop}\label{prop:smallsqueez}
Let $(Y,\ker\alpha)$ be a contact $3$-manifold. Then there exists a small radius $\delta\in\R^+$ such that for any $R>\delta$ there exists no contact embedding
$$(Y \x D^2(R),\ker\{\alpha+\lambda_\std\})\longrightarrow(Y \x D^2(\delta),\ker\{\alpha+\lambda_\std\}).$$
\end{prop}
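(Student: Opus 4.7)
The plan is to promote the overtwisted/tight dichotomy of Theorem \ref{thm:main} into a non-squeezing obstruction for wide product neighborhoods, exploiting Theorem \ref{thm:OTxD2} to produce a critical radius at which $Y \x D^2(R)$ transitions from tight to overtwisted. The argument runs most directly when $(Y,\ker\alpha)$ is itself overtwisted, which furnishes the ``instance'' of non-squeezing referred to in the opening sentence.

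Assume first that $(Y,\ker\alpha)$ is overtwisted, and set
$$R_* := \inf\{R>0 : (Y \x D^2(R), \ker(\alpha+\lambda_\std)) \text{ is overtwisted}\}.$$
Theorem \ref{thm:OTxD2} guarantees that $R_*$ is finite, and monotonicity of overtwistedness under the natural contact inclusions $Y \x D^2(R_1) \hookrightarrow Y \x D^2(R_2)$ for $R_1<R_2$ shows that $(Y \x D^2(R), \ker(\alpha+\lambda_\std))$ is overtwisted for every $R>R_*$. I would next verify that $R_*>0$: by the equivalence $1=2b$ in Theorem \ref{thm:main}, overtwistedness of $Y \x D^2(R)$ requires an embedded contact copy of $(\R^3 \x D^2(R_\univ),\ker(\alpha_\ot+\lambda_\std))$ for a universal radius $R_\univ$ depending only on $\alpha_\ot$, and when $R$ is sufficiently small the disk factor $D^2(R)$ is too narrow to accommodate this fixed model.

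Setting $\delta := R_*$, for every $R>\delta$ the source $(Y \x D^2(R),\ker(\alpha+\lambda_\std))$ is overtwisted while the target $(Y \x D^2(\delta),\ker(\alpha+\lambda_\std))$ is tight by minimality of $R_*$. A contact embedding would transport an overtwisted disk from source to target, forcing the target to be overtwisted as well, a contradiction. This closes the argument for overtwisted $Y$.

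The main obstacle is extending the proof to tight $Y$, for which both $Y \x D^2(R)$ and $Y \x D^2(\delta)$ may remain tight and the OT/tight dichotomy alone is silent. I would address this by performing a contact connect-sum $Y \# Y_\ot$ with a small overtwisted $3$-manifold $Y_\ot$, applying the previous step to $Y \# Y_\ot$, and then arguing that a hypothetical squeezing of $Y \x D^2(R)$ into $Y \x D^2(\delta)$ can be promoted---via a localized connect-sum supported in a Darboux chart disjoint from the squeezing---to a squeezing of $(Y \# Y_\ot) \x D^2(R)$ into $(Y \# Y_\ot) \x D^2(\delta)$, contradicting the overtwisted case. Quantifying the ``no room'' step in the second paragraph, and controlling $R_*$ under this connect-sum modification, is the most delicate part of the proof plan.
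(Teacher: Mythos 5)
There is a genuine gap already in your overtwisted case, at the step $R_*>0$. The claim that a small disk factor is ``too narrow to accommodate'' the fixed model $(\R^3\x D^2(R),\ker(\alpha_\ot+\lambda_\std))$ of criterion 2b is not an argument: a contact embedding rescales the contact form by an arbitrary positive conformal factor and need not respect the product splitting, so no naive size count applies. For instance, the complement of the circle $\{z_1=z_2=0\}$ in the standard tight $(S^5,\xi_\std)$ is contactomorphic to $(S^3\x\C,\ker(\alpha_\std+\lambda_\std))$, so a closed contact manifold can contain arbitrarily wide product regions; the width of a factor is not by itself an obstruction to embeddings. Positivity of your threshold is in fact true for closed $Y$, but the proof is a fillability argument: one embeds $(Y\x D^2(\delta),\ker(\alpha+\lambda_\std))$, for $\delta$ small, into a symplectically fillable contact $5$--fold with trivial conformal symplectic normal bundle (\cite[Proposition 11]{CPS}) and uses that overtwistedness obstructs fillability. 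This is exactly the route the paper takes for the whole proposition, through the fillable/non--fillable dichotomy rather than the tight/overtwisted dichotomy of the products: the target $Y\x D^2(\delta)$ sits inside a fillable $5$--manifold by \cite{CPS}, while the large--neighborhood obstructions to fillability of \cite{NP} exclude a contact embedding of $Y\x D^2(R)$ into any such manifold; this treats tight and overtwisted $Y$ uniformly and never requires $Y\x D^2(R)$ itself to be overtwisted. (A smaller point: with $\delta=R_*$ you should also justify why the target is not overtwisted exactly at the critical radius; with open disks this follows from compactness of the overtwisted disk, but it needs to be said.)

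The second gap is the reduction of the tight case to the overtwisted one, and I do not see how to repair it along the lines you sketch. A hypothetical squeezing $\Phi:Y\x D^2(R)\longrightarrow Y\x D^2(\delta)$ is a global contact embedding; its image has no reason to avoid $B\x D^2(\delta)$ for a prescribed Darboux ball $B\sse Y$, and $(Y\# Y_\ot)\x D^2(\cdot)$ differs from $Y\x D^2(\cdot)$ along all of $B\x D^2(\cdot)$ rather than in a compact ball of the $5$--manifold, so $\Phi$ cannot simply be transplanted to the connected sum. Moreover, the radius $\delta$ would have to play two roles at once --- the critical radius for $(Y\# Y_\ot)\x D^2$ and the target radius in the assumed squeezing of $Y$ --- and no relation between these is given. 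Finally, note that for tight $Y$ the products can stay tight for every $R$ (for $Y=S^3$ with the round form this follows from the $S^5$ picture above), so the overtwisted/tight dichotomy is genuinely silent in that case; some fillability--type certificate, as in \cite{NP}, appears unavoidable, which is why the paper's proof proceeds that way.
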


This proposition follows from \cite[Proposition 11]{CPS} and known obstructions to fillability \cite{NP}.

\begin{remark}
Proposition \ref{prop:smallsqueez} also holds in higher--dimensions for any weakly fillable contact structure $(Y^{2n-1},\ker\alpha)$, as it follows by combining F.~Bourgeois' construction \cite{Bo} of contact structures in $Y\times T^2$ and the observation \cite[Example 1.1]{MNW} that the construction preserves weak fillability.\hfill$\Box$
\end{remark}

In addition, we observe that the equivalence (1)$\Longleftrightarrow$(2) shows that contactomorphism type is sensitive to dimensional stabilization.

\begin{coro}\label{cor: OT stab}
There exist closed smooth manifolds $Y$ with two non--isomorphic contact structures $\ker\alpha_1$ and $\ker\alpha_2$ such that $(Y\times\C,\ker\{\alpha_1+\lambda_\std\})$ and $(Y\times\C,\ker\{\alpha_2+\lambda_\std\})$ are contactomorphic.
\end{coro}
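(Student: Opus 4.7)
The strategy combines the parametric $h$-principle of \cite{BEM} with the stabilization result Theorem \ref{thm:OTxD2}. The point is that the homotopy classification of almost contact structures on $Y$ can differ from that on $Y \x \C$: two distinct almost contact classes on $Y$ may coalesce after stabilization by a trivial complex factor, and then the $h$-principle forces the corresponding overtwisted contact structures to become isotopic.

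First I would select a smooth manifold $Y^{2n-1}$ carrying two almost contact structures $\eta_1, \eta_2$ with $[\eta_1] \neq [\eta_2]$ on $Y$, but such that $[\eta_1 \oplus \C] = [\eta_2 \oplus \C]$ on $Y \x \C$. A concrete candidate is $Y = S^5$: rank-$2$ complex subbundles of $TS^5$ fall into distinct classes parametrized (up to finer data) by $\pi_4(U(2)) \cong \Z_2$, whereas $\pi_4(U(3)) = 0$ because the connecting map in the fibration $U(2) \to U(3) \to S^5$ is surjective. Hence the two classes merge after $\oplus\,\C$. More generally, any non-injectivity of the stabilization map $[Y, BU(n-1)] \to [Y, BU(n)]$ supplies such an example.

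Next I would invoke the existence part of the $h$-principle of \cite{BEM} to realize each $[\eta_i]$ by a (unique up to isotopy) overtwisted contact structure $\ker\alpha_i$ on $Y$. Since $[\eta_1] \neq [\eta_2]$, the contact structures $\ker\alpha_1$ and $\ker\alpha_2$ are non-isomorphic on $Y$.

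Finally, by Theorem \ref{thm:OTxD2} (i.e.\ the equivalence $1\Leftrightarrow 2a$ of Theorem \ref{thm:main}), the stabilizations $(Y \x \C,\ker\{\alpha_i + \lambda_\std\})$ are overtwisted for $i=1,2$. Their underlying almost contact classes agree by the choice of $\eta_i$, so the uniqueness part of the $h$-principle of \cite{BEM} produces an isotopy between them, providing the required contactomorphism. The main obstacle is the algebraic-topological verification in the first step---carefully arranging that the stabilization map on almost contact classes is genuinely non-injective, rather than merely that the underlying complex bundles stably agree; once this is checked, Theorem \ref{thm:OTxD2} and the BEM $h$-principle close the argument.
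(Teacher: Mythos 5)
Your overall strategy is exactly the one the paper follows (two almost contact classes that merge after adding the $\C$ factor, realized by overtwisted structures via \cite{BEM}, then Theorem \ref{thm:OTxD2} plus the uniqueness $h$--principle), but the concrete algebro-topological input fails, and that input is the whole content of the corollary. On $S^5$ there is only \emph{one} homotopy class of almost contact structures: an almost contact structure is a section of the bundle with fiber $SO(6)/U(3)\cong\C P^3$ associated to $TS^5\oplus\R$, and since $TS^5\oplus\R$ is trivial these are classified by $\pi_5(\C P^3)=0$ (this is why, e.g., all of Ustilovsky's contact structures on $S^5$ lie in the same almost contact class). The group $\pi_4(U(2))\cong\Z/2$ you invoke classifies \emph{abstract} rank--$2$ complex vector bundles over $S^5$, not almost contact structures; the nontrivial bundle simply does not occur as a hyperplane field of an almost contact structure, so the two classes $[\eta_1]\neq[\eta_2]$ you need do not exist. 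For the same reason the ``more general'' criterion via non-injectivity of $[Y,BU(n-1)]\to[Y,BU(n)]$ is not the relevant map: the correct stabilization map is on homotopy classes of sections of bundles with fibers $SO(2n)/U(n)\hookrightarrow SO(2n+2)/U(n+1)$. In fact, for any $5$--manifold this fiber inclusion $SO(6)/U(3)\to SO(8)/U(4)\cong SO(7)/U(3)$ is an isomorphism on $\pi_k$ for $k\leq 5$ (use the fibration $SO(6)/U(3)\to SO(7)/U(3)\to S^6$ and $\pi_5(\C P^3)=0$), so the stabilization map on almost contact classes of $5$--manifolds is bijective and no example of that dimension can work.

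The repair is the paper's choice of $Y$: take $Y$ any homology $3$--sphere and let $\ker\alpha_1,\ker\alpha_2$ be two overtwisted contact structures whose underlying plane fields have different Hopf ($d_3$) invariants, so they are not contactomorphic. Here the relevant fibers are $SO(4)/U(2)\cong S^2$ and $SO(6)/U(3)\cong\C P^3$; since $H^2(Y;\Z)=0$ and $\pi_3(\C P^3)=0$, the two hyperplane fields $\ker\{\alpha_i+\lambda_\std\}$ become homotopic as almost contact structures on $Y\times\C$. From there your remaining steps go through as written: Theorem \ref{thm:OTxD2} (the $n=2$ case) makes both stabilizations overtwisted, and the uniqueness part of the $h$--principle of \cite{BEM} then gives the contactomorphism. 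One further small point to keep in mind for any choice of $Y$: ``non-isomorphic'' means non-contactomorphic, so you must also rule out a diffeomorphism of $Y$ carrying one almost contact class to the other, which is easy for the $\Z$--valued invariant on a homology sphere but is not automatic from $[\eta_1]\neq[\eta_2]$ alone.
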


\begin{proof}
For instance, we can consider $\xi_1=\ker\alpha_1$ and $\xi_2=\ker\alpha_2$ to be two different overtwisted contact structures on any integral homology $3$-sphere $M$. Then the almost contact structures on the smooth manifold $M$ are classified by homotopy classes of sections of a $SO(3)/U(1)$--bundle over $M^3$, and the obstruction classes thus live in $H^3(M,\Z)$ \cite[Chapter 4.3]{Ha}. The same computation shows that the set of homotopy classes of almost contact structure in the 5--fold $M\times\C$ is determined by the first Chern $c_1\in H^2(M\times\C,\Z)\cong H^2(M,\Z)\cong0$, and thus there exists a unique class of almost contact structures on $M\times\C$. In consequence the two hyperplane fields $\ker\{\alpha_1+\lambda_\std\}$ and $\ker\{\alpha_2+\lambda_\std\}$ become homotopic as almost contact structures in $Y\times\C$, and since both of these contact structures are overtwisted at infinity, they are isotopic contact structures \cite{BEM,El93}.
\end{proof}

Notice that the homotopy class of a compatible almost complex structure structure distinguishes the symplectizations of two different overtwisted contact structures on $S^3$, and thus the symplectizations are not symplectomorphic. Hence Theorem \ref{cor: OT stab} shows that the contactizations of two non--isomorphic complete symplectizations can be contactomorphic.

\subsection{Weinstein cobordisms with overtwisted concave end}\label{ssec:conc}
In this subsection we construct a smooth concordance with a Weinstein structure between an overtwisted contact structure on $S^{2n-1}$ and its standard contact structure $(S^{2n-1},\xi_\std)$ for the higher dimensions $\dim(S^{2n-1})\geq5$. This contrasts with the fact that such a concordance does not exist for $\dim(S^3)=3$ and also provides the general existence result stated in Theorem \ref{thm: weinstein exist}.

\begin{prop}\label{prop: concord}
Suppose that $n\geq3$, then there is a Weinstein structure $(M,\lambda, f)$ on the smoothly trivial cobordism $M \cong [0,1]\x S^{2n-1}$ such that $(\dd_+M,\lambda) \cong (S^{2n-1},\xi_\std)$ and $(\dd_-M,\ker(\lambda))$ is the unique overtwisted contact sphere in the almost contact class of $\xi_\std$.
\end{prop}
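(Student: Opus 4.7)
The plan is to combine the Weinstein cobordism from Theorem \ref{lem:cobord} with an auxiliary flexible Weinstein cobordism between overtwisted spheres, and then trivialize the resulting cobordism smoothly via the $h$-cobordism theorem. The dimension hypothesis $n \ges 3$ enters precisely as $\dim M = 2n \ges 6$, the requirement of the smooth $h$-cobordism theorem.

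Let $\xi_\ot$ denote the overtwisted contact structure on $S^{2n-1}$ in the almost contact class of $\xi_\std$; its existence and uniqueness up to isotopy follow from \cite{BEM}. First, apply Theorem \ref{lem:cobord} to obtain a Weinstein cobordism $(W_0,\lambda_0,f_0)$ from some overtwisted $(S^{2n-1}, \xi_0)$ to $(S^{2n-1}, \xi_\std)$. In general, neither is $\xi_0$ in the almost contact class of $\xi_\std$, nor is $W_0$ smoothly trivial. Next, construct a flexible Weinstein cobordism $(W_1,\lambda_1,f_1)$ from $(S^{2n-1}, \xi_\ot)$ to $(S^{2n-1}, \xi_0)$ whose handle decomposition cancels, in homology, that of $W_0$. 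Existence of such $W_1$ uses the flexible Weinstein $h$-principle of \cite[Ch.~14]{CE}: because the concave end is overtwisted, any almost Weinstein data extending the boundary structure can be realized with loose attaching Legendrians, and the convex end is then overtwisted by Proposition \ref{prop: flexible ot} and contactomorphic to $(S^{2n-1},\xi_0)$ by the uniqueness of overtwisted structures in a fixed almost contact class.

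The concatenation $M = W_1 \cup_{(S^{2n-1},\xi_0)} W_0$ is a Weinstein cobordism from $(S^{2n-1}, \xi_\ot)$ to $(S^{2n-1}, \xi_\std)$. By the choice of $W_1$, $M$ is simply connected with $H_*(M, \dd_- M;\Z) = 0$, so the smooth $h$-cobordism theorem produces a diffeomorphism $M \cong [0,1] \x S^{2n-1}$. The main obstacle is to arrange both conditions on $W_1$ simultaneously: the almost contact class at its convex end must match that of $\xi_0$, while its handles must cancel those of $W_0$ homologically. The open book presentation in the proof of Theorem \ref{lem:cobord} makes the handle structure of $W_0$ explicit enough to select compatible dual attachments, and the flexibility of $W_1$ (a consequence of the overtwistedness of $\xi_\ot$ together with the equivalence $1=3a$ of Theorem \ref{thm:main}) supplies the freedom needed to realize both constraints.
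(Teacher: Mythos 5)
Your overall strategy (concatenate the cobordism of Theorem \ref{lem:cobord} with an auxiliary flexible cobordism and then invoke the smooth $h$-cobordism theorem) is genuinely different from the paper's, but as written it has a real gap: the existence of the auxiliary cobordism $W_1$ is asserted rather than constructed, and it is exactly the hard point. The flexible Weinstein $h$-principle of \cite{CE} lets you realize a \emph{given} smooth cobordism with compatible formal data as a flexible Weinstein cobordism; it gives you no mechanism for choosing the smooth handle structure of $W_1$ so that simultaneously (i) $\dd_+W_1$ is diffeomorphic to $S^{2n-1}$, (ii) the boundary map of the handle complex of $W_1\cup W_0$ is unimodular, where the relevant entries are intersection numbers of the \emph{fixed} attaching $(n-1)$-spheres of the $4k-2$ index-$n$ handles of $W_0$ with the belt spheres of $W_1$ in the middle level, and (iii) an almost complex structure on $W_1$ exists interpolating between the standard almost contact class at $\dd_-W_1$ and the class of $\xi_0$ at $\dd_+W_1$. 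Note in particular that $W_1$ cannot consist only of subcritical index-$(n-1)$ handles (attaching such handles to $S^{2n-1}$ changes the diffeomorphism type of the upper boundary), so extra index-$n$ handles are forced, and their homological contribution must also be cancelled; none of this bookkeeping is carried out, and the appeal to ``the freedom supplied by flexibility'' does not address it, since flexibility concerns the contact isotopy class of the attaching Legendrians, not the smooth intersection data that the $h$-cobordism argument needs.

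For comparison, the paper's proof sidesteps all of this and never uses Theorem \ref{lem:cobord}: it starts from the symplectization of $(S^{2n-1},\xi_\ot)$, which is already the trivial smooth cobordism, homotopes the Weinstein structure to create a single smoothly cancelling pair of critical points of index $n-1$ and $n$, and then replaces the attaching Legendrian sphere $\Lambda\sse(S^{2n-1},\xi_\ot)$ of the $n$-handle by the loose Legendrian $\Lambda_0\sse(S^{2n-1},\xi_\std)$ in the same formal class. The equivalence $1=4$ of Theorem \ref{thm:main} shows the new middle level is overtwisted, the classification of \cite{BEM} identifies it with the old one, and smooth triviality of the glued cobordism is automatic because the modified $n$-handle is attached along a sphere smoothly isotopic to the original cancelling one; no $h$-cobordism theorem is needed. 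Relatedly, the constraint $n\ges 3$ is not ``precisely'' the $h$-cobordism bound: it is forced because loose Legendrians and Theorem \ref{thm:main} require ambient dimension $2n-1\ges 5$ (and the statement fails for $n=2$ by gauge-theoretic obstructions), a requirement your construction needs as well. If you want to salvage your route, you must exhibit $W_1$ explicitly, e.g.\ by prescribing its smooth handle decomposition dual to that of $W_0$ and verifying the unimodularity and almost contact matching conditions, which is substantially more work than the paper's one-cancelling-pair argument.
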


\begin{proof}
Let $(S^{2n-1},\xi_\ot)$ be the overtwisted contact sphere in the standard almost contact class and let $M$ be its symplectization. The standard Weinstein structure on $M$ can be homotoped to one with a cancelling pair of critical points, one of index $n-1$ and one of index $n$. Consider a middle contact level $(Y, \xi)$ between these two critical points. Then we can view $Y$ either as a subcritical isotropic surgery on $(S^{2n-1},\xi_\ot)$ induced by the bottom half of the cobordism $M$, or as the result of a $(+1)$--surgery along a Legendrian sphere $\Lambda \sse (S^{2n-1},\xi_\ot)$ induced by the top half of the cobordism $M$. Note that the contact structure $(Y,\xi)$ can be obtained as a subcritical surgery on an overtwisted manifold and thus it is overtwisted. See Figure \ref{fig:concordance} for a schematic picture of the forthcoming argument.

Now let $\Lambda_0 \sse (S^{2n-1},\xi_\std)$ be the loose Legendrian sphere which is in the same formal Legendrian isotopy class as the Legendrian sphere $\Lambda \sse (S^{2n-1},\xi_\ot)$. Note that $(S^{2n-1},\xi_\ot)$ and $(S^{2n-1},\xi_\std)$ are in the same almost contact class, hence the formal Legendrian isotopy classes are canonically identified once we fix a diffeomorphism realizing the almost contact equivalence. Then performing a $(+1)$--surgery along the loose Legendrian $\Lambda_0$ gives a contact manifold $(Y, \xi')$ which is almost contact equivalent to $(Y, \xi)$. Theorem \ref{thm:main} implies that the contact structure $(Y, \xi')$ is overtwisted, and therefore the contact structure $\xi$ is isotopic to $\xi'$.

Let $M_b$ be the bottom half of the Weinstein cobordism $M$ and $M_t$ the Weinstein cobordism from $(Y, \xi')$ to $(S^{2n-1},\xi_\std)$ induced by the above $(+1)$--surgery on $\Lambda_0$. Then the glued cobordism $\wt M := M_b \cup_Y M_t$ is a Weinstein cobordism from $(S^{2n-1},\xi_\ot)$ to $(S^{2n-1},\xi_\std)$ which is diffeomorphic to the smooth concordance $M$.
\end{proof}

\begin{center}
\begin{figure}[h]
\includegraphics[scale=0.7]{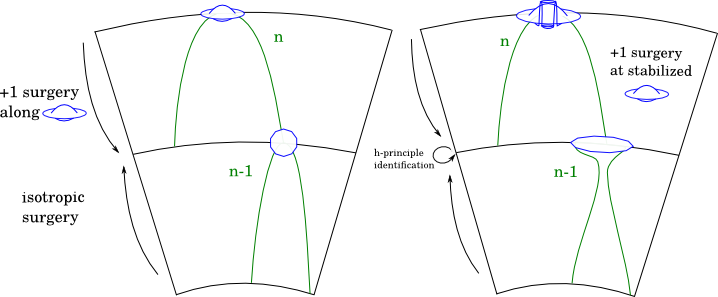}
\caption{On the left, the symplectization of $(S^{2n-1},\xi_{ot})$ with a cancelling pair of critical points. On the right, Weinstein concordance from $(S^{2n-1},\xi_{ot})$ to $(S^{2n-1},\xi_\std)$ obtained using (+1)--surgery on a loose Legendrian.}
\label{fig:concordance}
\end{figure}
\end{center}

Proposition \ref{prop: concord} describes a strictly higher--dimensional phenomenon in contact topology. Indeed, it follows from the functoriality of Seiberg--Witten invariants that there exists no such Weinstein concordance in the case $n=2$ \cite{MR}, \cite[Theorem 2.3]{Hu}, \cite[Chapter 7]{KM}.

The Weinstein concordance among spheres constructed in Proposition \ref{prop: concord} can now be glued to any Weinsten cobordism, thus proving the existence of all Weinstein cobordisms with an overtwisted concave boundary and arbitrary convex end which are not prohibited by topological restrictions:

\begin{thm} \label{thm: weinstein exist}
Let $(Y_-, \xi_\ot)$ and $(Y_+, \xi)$ be coorientable contact manifolds with the contact structure $(Y_-, \xi_\ot)$ being overtwisted and $\dim(Y_-)=\dim(Y_+)\geq5$.

Suppose there exists a smooth cobordism $W$ from $Y_-$ to $Y_+$ such that
\begin{itemize}
 \item[a.] The relative homotopy type of $W$ with respect to its boundary deformation retracts onto a half-dimensional $CW$ complex.
 \item[b.] $W$ admits an almost complex structure $J$ such that the restriction $J|_{Y_+}$, resp.~$J|_{Y_-}$, is homotopic through almost contact structures to $\xi$, resp.~$\xi_\ot$.
\end{itemize}
Then there exists a Weinstein cobordism $(W,\lambda, \p)$ with concave boundary $\dd_-(W, \lambda) = (Y_-, \xi_\ot)$ and convex boundary $\dd_+(W, \lambda) = (Y_+, \xi)$.
\end{thm}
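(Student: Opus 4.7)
The plan is to build the Weinstein cobordism in two moves. First I would realize $W$ as a flexible Weinstein cobordism with concave end $(Y_-, \xi_\ot)$ and some auxiliary overtwisted convex end $\xi''$ in the prescribed almost contact class, then I would correct the convex end to the target $\xi$ by gluing on the concordance from Proposition \ref{prop: concord}.

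For the first move, hypothesis (a) lets me choose a smooth handle decomposition of $W$ with handles of index at most $n$ compatible with $J$, and I would attach each handle as a Weinstein handle proceeding upward from $(Y_-, \xi_\ot)$. Subcritical handles are attached via the $h$--principle for subcritical isotropic isotopies \cite{PDR}, with the formal data read off from $J$. For each critical $n$--handle, I would choose the attaching Legendrian to be loose: the intermediate contact manifold below it is overtwisted by induction (the base case is $\xi_\ot$, and the inductive step is exactly Proposition \ref{prop: flexible ot}), and the $h$--principle for Legendrians in overtwisted manifolds then supplies a loose Legendrian in the formal class dictated by the smooth attaching sphere and $J$. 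The result is a flexible Weinstein cobordism $(W, \lambda_0, \varphi_0)$ with concave end $(Y_-, \xi_\ot)$ and convex end $(Y_+, \xi'')$, where $\xi''$ is automatically in the almost contact class of $\xi$ and, by one final application of Proposition \ref{prop: flexible ot}, is overtwisted.

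For the second move, I would combine the BEM uniqueness theorem with the concordance from Proposition \ref{prop: concord}. Since $(Y_+, \xi \# \xi_\ot)$ is overtwisted and in the same almost contact class as $\xi \# \xi_\std \simeq \xi \simeq \xi''$, BEM identifies $(Y_+, \xi'')$ with $(Y_+, \xi \# \xi_\ot)$. Taking the vertical connected sum of the trivial Weinstein cobordism $(Y_+, \xi) \x [0,1]$ with the concordance $C$ of Proposition \ref{prop: concord} produces, by the level-wise contact connect sum description recalled in Section \ref{background}, a smoothly trivial Weinstein cobordism from $(Y_+, \xi \# \xi_\ot)$ to $(Y_+, \xi \# \xi_\std) = (Y_+, \xi)$. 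Gluing this on top of $(W, \lambda_0, \varphi_0)$ along the BEM identification preserves the smooth cobordism type of $W$ and realizes the convex contact end as $(Y_+, \xi)$, completing the construction.

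The main obstacle I expect is the first move, where the flexibility of the Weinstein cobordism has to be maintained through the handle-by-handle construction. The subtlety is an interlocking induction: overtwistedness at one level is what allows the next critical handle to be attached along a loose Legendrian, and Proposition \ref{prop: flexible ot} is what propagates overtwistedness through that attachment to the next level. Once this inductive loop is in place, the rest of the argument --- BEM uniqueness plus vertical connect sum with the Proposition \ref{prop: concord} concordance --- is essentially bookkeeping.
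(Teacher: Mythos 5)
Your proposal is correct and follows essentially the same route as the paper: a flexible Weinstein structure on $W$ with concave end $(Y_-,\xi_\ot)$, Proposition \ref{prop: flexible ot} together with BEM uniqueness to identify the overtwisted convex end, and then the vertical connected sum of the Proposition \ref{prop: concord} concordance with the symplectization of $(Y_+,\xi)$ glued on top. The only difference is cosmetic: for your first move the paper simply cites the existence theorem for flexible Weinstein cobordisms from \cite{CE}, and your interlocking induction on overtwistedness of the intermediate levels is not actually needed there, since loose Legendrians exist in every formal class in any contact manifold of dimension at least five.
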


\begin{proof}
Let $(Y_+,\xi_\ot^+)$ be the overtwisted contact structure which is in the same almost contact homotopy class as $\xi$. The existence theorem for flexible Weinstein cobordisms \cite{CE} provides a flexible Weinstein structure $(\lambda_f, \p_f)$ on $W$ such that the almost complex structure $J$ is compatible with the symplectic 2--form $d\lambda_f$ after homotopy, and $\dd_-(W, \lambda_f, \p_f) = (Y_-, \xi_\ot)$. Since the Weinstein cobordism $(\lambda_f, \p_f)$ is flexible by construction it follows that the contact structure $\dd_+(W, \lambda_f)$ in the convex end is overtwisted by using Proposition \ref{prop: flexible ot}. Note also that the contact structure in this convex end $\dd_+(W, \lambda_f)$ is in the same almost contact homotopy class as the initial overtwisted contact structure $\xi_\ot^+$ since both are homotopic to $J|_{Y_+}$. In consequence, we obtain the contactomorphism $\dd_+(W, \lambda_f) \cong (Y_+, \xi_\ot^+)$.

Let us now consider the Weinstein concordance $M = ([0,1]\x S^{2n-1}, \lambda, f)$ constructed in Proposition \ref{prop: concord} and the symplectization $S(Y_+, \xi)$ of the contact structure $(Y_+, \xi)$. Then the connected sum cobordism $M \ol{\#} S(Y_+, \xi)$ is a Weinstein cobordism which is diffeomorphic to the concordance $[0,1]\x Y$ and satisfies
$$\dd_-M \ol{\#} S(Y_+, \xi) \cong (Y_+, \xi_\ot^+),\qquad \dd_+M \ol{\#} S(Y_+, \xi) \cong (Y_+, \xi).$$
Thus, we can concatenate the Weinstein cobordism $(W, \lambda_f, \p_f)$ to this Weinstein smooth concordance $M \ol{\#} S(Y_+, \xi)$ along their common contact boundary
$$\dd_+(W, \lambda_f) \cong (Y_+, \xi_\ot^+) \cong \dd_-M \ol{\#} S(Y_+, \xi)$$
and thus construct the Weinstein cobordism $(W,\la,\p)$ with the desired properties.
\end{proof}

\begin{remark}
Theorem \ref{thm: weinstein exist} is the first step in the proof of the general $h$--principle for symplectic cobordisms with overtwisted concave end proven in \cite{ElMu}.
\end{remark}

\end{document}